\documentclass[opre]{arXiv}

\RequirePackage{bm}
\RequirePackage{endnotes}

\usepackage{fix-cm}
\DeclareFontShape{OML}{cmm}{b}{it}{<->cmmib10}{}
\DeclareFontShape{OMS}{cmsy}{b}{n}{<->cmsy10}{}

\OneAndAHalfSpacedXI

\usepackage{algorithm}
\usepackage[noend]{algpseudocode}

\algrenewcommand\algorithmicrequire{\textbf{Input:}}
\algrenewcommand\algorithmicensure{\textbf{Output:}}

\usepackage{tikz}
\usepackage{amsmath,amssymb,amsfonts}
\usepackage{graphicx,color}
\usepackage{url}
\usepackage{subfiles}
\usepackage{booktabs}
\usepackage{mathtools}
\usepackage{microtype}
\usepackage[english]{babel}
\usepackage{caption}
\usepackage{verbatim}
\usepackage[commandnameprefix=always]{changes}
\definechangesauthor[name={Qingyuan Xu}, color=purple]{QX}
\usepackage{subcaption}
\usepackage{hyperref}       
\hypersetup{
  colorlinks=true,
  linkcolor=red,
  urlcolor=magenta,
  citecolor=blue
}

\usepackage{natbib}
 \bibpunct[, ]{(}{)}{,}{a}{}{,}%
 \def\bibfont{\small}%
\usetikzlibrary{arrows.meta,calc}

\newcommand{\X}{\mathcal X}
\newcommand{\XiSet}{\Xi}
\newcommand{\I}{\mathcal I}
\newcommand{\R}{\mathbb R}

\DeclareMathOperator{\state}{\textbf{state}}
\DeclareMathOperator{\refpoint}{\textbf{ref}}

\EquationsNumberedThrough    

\TheoremsNumberedThrough     
\ECRepeatTheorems  %

\MANUSCRIPTNO{ }

\begin{document}




\TITLE{Soft Separation for Adaptive Robust Optimization}

\RUNAUTHOR{Q. Xu and R. Jiang}
\RUNTITLE{Soft Separation for Adaptive Robust Optimization}

\ARTICLEAUTHORS{%
\AUTHOR{Qingyuan Xu, Ruiwei Jiang}
\AFF{Department of Industrial and Operations Engineering, University of Michigan, \EMAIL{\{qyxu, ruiwei\}@umich.edu}}

} 

\ABSTRACT{%
We propose an algorithmic framework for solving adaptive robust optimization with provable guarantees on both tractability and solution accuracy. The framework introduces soft separation, a probabilistic mechanism for identifying worst-case uncertainty realizations via a time-inhomogeneous Markov chain. Rather than solving an exact separation problem in each iteration, which is intractable in general, the chain carries adversarial information across iterations, co-evolves with the optimization iterates, and recovers exact separation at terminal iterates with high probability. For continuous first-stage (here-and-now) decisions, we design a first-order method that uses soft separation to produce adaptive gradient estimates. Notably, we prove \emph{polynomial-time convergence} in expectation to the global optimum. For mixed-integer here-and-now decisions, we embed soft separation within a branch-and-cut framework to generate valid cuts for the robust objective and obtain a high-probability certificate of global optimality. Numerical experiments demonstrate that the proposed methods scale favorably with problem dimension and scenario size relative to state-of-the-art approaches. The instances and code are available online at \url{https://github.com/xuqy2002/SoftSeparationARO}.


}%


\KEYWORDS{Robust optimization, soft separation, Markov chain} 


\maketitle


\section{Introduction}\label{sec:Intro}

Adaptive robust optimization (ARO) is an important framework for optimization under uncertainty. 
To hedge against adversarial uncertainty, it adapts to the uncertain realization through adjustable second-stage (also known as ``wait-and-see'') recourse decisions, which allow ARO to provide reliable decisions while remaining less conservative than static robust optimization alternatives. ARO finds wide-ranging applications in, e.g., healthcare~\citep{kong2013scheduling,qi2017mitigating,ryu2025nurse}, power systems~\citep{jiang2011robust,an2014exploring,moreira2024distribution}, and supply chains~\citep{lu2015reliable,cheng2018two,qi2024sequential}. 
Formally, ARO is formulated as
\begin{equation}
\label{main}
\begin{aligned} 
F^\star \ := \ \min_{x\in\mathcal X} 
\ F(x) := \left\{ f(x)+\max_{\xi\in\Xi}Q(x,\xi)
\right\},
\end{aligned} \tag{ARO}
\end{equation}
where $f$ denotes a cost function, $x\in\mathcal X\subseteq\mathbb R^{d_x}$ 
denotes the first-stage (or ``here-and-now'') decision, and $\xi\in\Xi\subseteq\mathbb R^{d_\xi}$ denotes the uncertain parameter with \(\Xi\) representing an uncertainty set. Both \(x\) and \(\xi\) may be  continuous or discrete. For fixed $x$ and $\xi$, the recourse function is defined through a {\color{black} convex conic program
\begin{equation}
\label{recourse}
Q(x,\xi)
:=
\min_y \; b^\top y
\quad
\text{s.t.}
\quad
G y + E x + U\xi - h \in \mathcal{K},
\end{equation}}
\noindent where $y \in \mathbb{R}^{d_y}$ denotes the wait-and-see recourse decisions and \(\mathcal{K} \subseteq \mathbb{R}^m\) denotes a convex cone. For example, when \(\mathcal{K} := \mathbb{R}^m_+\), the recourse formulation~\eqref{recourse} reduces to a linear program and~\eqref{main} is a two-stage robust \emph{linear} program. Despite wide applications,~\eqref{main} is computationally challenging~\citep{minoux20112}, largely because of the inner max-min problem. For fixed \(x\), this problem seeks to identify a worst-case scenario \(\xi^\star \in \argmax_{\xi \in \Xi}Q(x, \xi)\), or equivalently, exactly separate the epigraph of the worst-case recourse function \(\max_{\xi \in \Xi}Q(x, \xi)\). Exact solution algorithms for~\eqref{main} solve the problem \emph{repeatedly}. For example, the column-and-constraint generation (C\&CG) algorithm~\citep{ZengZhao2013} repeatedly refines a relaxation to~\eqref{main} by appending new $\xi^\star$, together with the corresponding recourse variables and constraints. Likewise, the Benders decomposition algorithm~\citep[see, e.g.,][]{jiang2011robust} repeatedly separates the worst-case recourse function by a supporting hyperplane of \(Q(x, \xi^\star)\), also known as an ``optimality cut.''

Unfortunately, the exact separation problem is usually intractable. For example, when \(\mathcal{K}=\mathbb{R}^m_+\) and the uncertainty set \(\Xi\) admits a parametric form such as a polyhedron or an ellipsoid, dualizing the recourse linear program~\eqref{recourse} recasts the problem as a bilinear program, which is generally NP-complete~\citep{bennett1993bilinear}. In addition, when \(\Xi\) is a finite set of scenarios with no clear structure, the exact separation boils down to enumerating \(\Xi\) and solving the corresponding recourse problem~\eqref{recourse} for each scenario, which quickly becomes prohibitive as the scenario size increases (see Section~\ref{sec:experiments} for numerical demonstration).


We argue, however, that an exact separation is not necessary in every step of an iterative algorithm like C\&CG or Benders. 
Early in the iterative algorithm, the incumbent solution can be far from optimum and an adversarial (but not necessarily the worst) scenario can already reveal useful directions for improvement. Moreover, adversarial scenarios often persist across nearby incumbent solutions. 
Hence, the adversarial scenario found for an incumbent solution need not be discarded immediately after the incumbent is updated. Instead, it is computationally much cheaper to carry forward and refine it for future incumbents.

These observations motivate \textbf{soft separation}. Instead of solving a fresh exact separation problem in each iteration, we track adversarial scenarios through a time-\emph{in}homogeneous Markov chain over the uncertainty set.  
At iteration $n$, the transition kernel uses the current incumbent $x_n$, while the chain state records an adversarial scenario. 
As $x_n$ changes, the chain updates this state rather than restarting the adversarial search from scratch. Each separation therefore reduces to a simple scenario-wise update, while the Markov chain carries adversarial information across iterations. To back up this idea rigorously, we show that, when the \(x_n\) sequence changes in a controlled way and the soft separation is appropriately designed, the Markov chain identifies adversarial scenarios for all later-stage iterates of \(x_n\) with high probability. Then, we apply this soft-separation oracle in two settings: for \(x\) being continuous, the oracle produces Markovian gradient estimates for a robust surrogate; and for \(x\) being mixed-integer, the oracle generates valid cuts for the worst-case recourse function in a branch-and-cut framework. 
Notably, for continuous \(x\), we prove polynomial-time convergence of \(x_n\), in expectation, to the global optimum of~\eqref{main}. To our best knowledge, this provides the first \emph{tractable} and \emph{tight} approximation algorithm for general~\eqref{main}.

\subsection{Literature Review}
Robust optimization (RO) models can be categorized by how the uncertainty set \(\Xi\) is characterized. The majority of the literature focuses on parametric uncertainty sets such as polyhedra~\citep[see, e.g.,][]{bertsimas2004price} or ellipsoids~\citep[see, e.g.,][]{ben1999robust}. These uncertainty sets have good interpretation and provide robustness guarantees that, with high probability, any feasible solution to a RO model remains feasible when exposed to out-of-sample random realizations of \(\xi\). Such guarantees usually rely on distributional assumptions of \(\xi\) such as independence (among all \(\xi\)-entries), light-tailedness, symmetry, or boundedness~\citep[see, e.g., Table 4 of][for a summary]{bertsimas2021probabilistic}. In addition, \(\Xi\) can be characterized by independent scenarios of \(\xi\) drawn from its (possibly latent) probability distribution. This gives rise to a finite \(\Xi\) and the ensuing~\eqref{main} admits both a priori~\citep{campi2008exact} and a posteriori~\citep{campi2018wait} robustness guarantees. Different from the parametric uncertainty sets, the robustness guarantees herein are free of distributional assumptions, except for the independence of the scenarios in \(\Xi\). This paper seeks to develop algorithms for both parametric and scenario uncertainty sets.

Existing algorithmic work for solving~\eqref{main} falls into two streams, with the first aiming to solve~\eqref{main} to global optimum and solve the exact separation more efficiently, and the second resorting to tractable approximations of~\eqref{main}. 
When \(\Xi\) admits a parametric form such as a polyhedron or an ellipsoid, the C\&CG algorithm by~\cite{ZengZhao2013} solves a relaxation of~\eqref{main} by considering a subset of \(\xi\)-scenarios. To achieve global optimum, C\&CG iteratively solves the exact separation problem and appends the ensuing \(\xi^\star\) to this subset. Despite the challenges of the exact separation, C\&CG has demonstrated empirical success in real-world engineering systems such as power grids~\citep[see, e.g.,][]{yuan2016robust}, transportation~\citep{huang2023column}, and healthcare~\citep{ji2022two}. Recently,~\citet{tsang2023inexact} proposed a variant of C\&CG that solves the~\eqref{main} relaxations inexactly while still maintaining a finite convergence to the global optimum. In addition, variants of C\&CG have been applied to solve~\eqref{main} with mixed-integer recourse~\citep{zhao2012exact} and decision-dependent uncertainty~\citep{zeng2022two}. As compared to these exact algorithms, this paper studies solution approaches with tractability guarantee, e.g., converging to \(\epsilon\)-optimality of~\eqref{main} within a number of iterations that scales polynomially in \(1/\epsilon\).

On the other hand, to address the computational challenge of~\eqref{main}, decision rules have been proposed to restrict the recourse variables \(y\) in~\eqref{recourse} as functions of \(\xi\). For example,~\cite{bertsimas2013approximability} study a static policy, in which \(y\) is restricted to be a constant function of \(\xi\). This casts~\eqref{main} as a one-stage robust optimization problem and regains computational tractability~\citep[see, e.g.,][]{ben1999robust,bertsimas2004price}. Later,~\cite{han2023finite} extend the static policy to a piecewise static one and retain good interpretability. Moreover,~\cite{ben2004adjustable} propose an affine decision rule (ADR), which restricts \(y\) to be an affine function of \(\xi\), and subsequent studies generalize ADR through lifting~\citep[see, e.g.,][]{bertsimas2019adaptive}, Fourier-Motzkin elimination~\citep{zhen2018adjustable}, and to piecewise affine variants~\citep[see, e.g.,][]{chen2008linear,georghiou2020primal}. As in static policies, ADR and its generalizations produce conservative but tractable approximations for~\eqref{main}. These policies generally do not solve~\eqref{main} to global optimum, with a few exceptions, e.g., when \(\Xi\) is a standard simplex~\citep{bertsimas2012power} or a box~\citep{zhen2018adjustable} or when~\eqref{main} fulfills additional structural and technical assumptions~\citep{georghiou2026optimality}. Nevertheless, these policies admit approximation guarantees when, e.g., all entries of \(\xi\) are nonnegative~\citep{bertsimas2011geometric,bertsimas2015performance} or all recourse matrix entries are nonnegative~\citep{ElHousniGoyal2021,ElHousniFoussoulGoyal2024}. As compared to these decision rules, our algorithms seek to find near-optimal solutions to a general~\eqref{main} model.

We summarize our main contributions as follows.
\begin{enumerate}
    \item For~\eqref{main} with continuous decision variables and a scenario uncertainty set \(\Xi\), we adopt 
    a first-order algorithm that approximates the gradients of the robust objective using a time-inhomogeneous Markov chain over \(\Xi\), thereby avoiding exhaustive exact separation at each iteration. We prove polynomial-time convergence, in expectation, to the global optimum of~\eqref{main}.
    \item We formalize this mechanism as a soft separation framework and establish probabilistic guarantees for identifying adversarial scenarios. We then extend the framework beyond finite scenarios to general uncertainty sets.
    \item For~\eqref{main} with mixed-integer decisions, we embed soft separation within a branch-and-cut algorithm to produce deterministically valid cuts for the robust objective. Leveraging the probabilistic guarantees of soft separation, we obtain a high-probability certificate of global optimality.
    \item We demonstrate the proposed algorithms in extensive numerical experiments spanning continuous decisions and mixed-integer decisions, as well as scenario, ellipsoidal, and budget uncertainty sets. Across these settings, the proposed methods provide near-optimal solutions to~\eqref{main} and exhibit favorable scalability.
\end{enumerate}

The remainder of the paper is organized as follows. Section~\ref{sec:2} studies the first-order algorithm to solve~\eqref{main} with continuous decision variables and a scenario uncertainty set. The proposed gradient estimation method is formalized as the soft separation framework in Section~\ref{sec:3} and then applied to~\eqref{main} with mixed-integer decision variables in Section~\ref{sec:4}. We report the numerical experiment results in Section~\ref{sec:experiments}. We present all proofs in Appendix~\ref{app:proof}.

\paragraph{Notation:} we denote by \(\mathbf{0}, \mathbf{1}\) all-zero and all-one vectors, by \(\mathrm{Id}\) an identity matrix, by \(\mathbb{N}_+\) the nonnegative integers, by \([n]:=\{1, \ldots, n\}\) the set of running indices for \(n \in \mathbb{N}_+\), by \(\mathbb{B}^d\) the unit \(\ell_2\)-norm ball in \(\mathbb{R}^d\), by \(\mathbb{S}^{d-1}\) the sphere of \(\mathbb{B}^d\), 
and by \(O(\cdot)\) the big-O notation with \(f(n) = O(g(n))\) indicating that there exists an \(M > 0\) such that \(|f(n)|\leq Mg(n)\) for all sufficiently large \(n\).


\section{Continuous Decisions and Scenario Uncertainty}
\label{sec:2}

We start with a baseline~\eqref{main} model with continuous decision variables \(x\) and a scenario uncertainty set \(\Xi := \{\xi^{(i)}: i \in \mathcal{I}\}\), where \(\mathcal{I}\) is an index set. This setup is without loss of generality (WLOG) when \(\Xi\) is characterized by scenarios of \(\xi\)~\citep{campi2008exact,campi2018wait}, or when \(\Xi\) is polyhedral because \(Q(x, \xi)\) is convex in \(\xi\), implying that \(\Xi\) can be replaced by the set of its extreme points WLOG. 
For ease of exposition, we make the following assumptions throughout this section.
\begin{assumption}[Continuous \(x\) and cost function]
\label{assump:5}
The set $\X\subseteq\R^{d_x}$ is compact and convex with \(\|x - x'\|_1 \leq D\) for all \(x, x' \in \X\). The first-stage cost function $f:\X\to\R$ is convex and $L_f$-Lipschitz on an open neighborhood of $\X$.
\end{assumption}


\begin{assumption}[Relatively complete recourse]
\label{assump:1}
For every $(x,\xi)\in\X\times\XiSet$, the recourse function \(Q(x, \xi)\) is finite, that is, \(|Q(x, \xi)| < \infty\).
\end{assumption}


\begin{assumption}[Linear recourse problem]
\label{assump:linear-recourse}
\(\mathcal{K}=\mathbb{R}^m_+\).
\end{assumption}

Assumptions~\ref{assump:5}--\ref{assump:1} are standard in the literature of~\ref{main}. In particular, when~\eqref{main} is a two-stage robust \emph{linear} program with \(f(x) := c^{\top}x\), Assumption~\ref{assump:5} holds trivially with \(L_f = \|c\|\). We make Assumption~\ref{assump:linear-recourse} for ease of presenting the main ideas without technical clutter, and we extend the results of this section to the general conic case in Appendix~\ref{app:conic}. 
In general, $Q(x,\xi)$ lacks smoothness in \(x\) (and differentiability, even under Assumption~\ref{assump:linear-recourse}). We therefore work with its subgradients obtained from dual optimal solutions of the recourse problem. As a preliminary, the following lemma recalls relevant properties of \(Q\left(x,\xi\right)\).



\begin{lemma}
\label{lem:recourse_lipschitz_smoothing}
For each scenario \(\xi^{(i)}\), the function \(Q(x, \xi^{(i)})\) admits a representation
\begin{equation}
\label{QP}
Q\left(x,\xi^{(i)}\right) \ 
= \ 
\max_{\substack{\pi \geq 0\\ G^\top\pi=b}}
\Big\{
\pi^\top\!\left(h - E x - U\,\xi^{(i)}\right)
\Big\}.
\end{equation}
Let $\pi^\star(x,i)$ denote a maximizer of~\eqref{QP},
chosen to minimize $\|E^\top\pi\|_1$ in case of alternative dual
optimal solutions. Then, it holds that
\(
-E^\top\pi^\star(x,i)
\in \partial_x Q\left(x,\xi^{(i)}\right)
\). 
Furthermore, there exists a constant $L_Q>0$ such that, with
\(
L_Q' := L_Q\|E\|
\)
and
\(
L_\xi' := L_Q\|U\|
\), 
the following hold:

\noindent \emph{(1) Lipschitz continuity in \(x\):}
For every $\xi\in\Xi$, $Q(\cdot,\xi)$ is $L_Q'$-Lipschitz continuous on $\mathcal X$;

\noindent \emph{(2) Lipschitz continuity in \(\xi\):}
For every $x \in \X$, $Q(x,\cdot)$ is $L_\xi'$-Lipschitz continuous on $\Xi$;

\noindent \emph{(3) Bounded subgradient selection:} 
For every $x\in\mathcal X$ and every scenario $\xi^{(i)}$, the selected subgradient is measurable and satisfies 
\(
\left\|E^\top\pi^\star(x,i)\right\|
\leq L_Q'
\).
\end{lemma}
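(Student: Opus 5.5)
The plan is to start from LP duality for the recourse problem~\eqref{recourse} with $\mathcal K=\mathbb R^m_+$, namely $\min_y\{b^\top y : Gy \ge h - Ex - U\xi^{(i)}\}$. Its dual is $\max\{\pi^\top(h-Ex-U\xi^{(i)}) : \pi\ge 0,\ G^\top\pi=b\}$. By Assumption~\ref{assump:1} the primal is feasible with finite value for every $(x,\xi)\in\X\times\Xi$, so strong duality gives~\eqref{QP}. In particular, the dual feasible set $\Pi:=\{\pi\ge 0: G^\top\pi=b\}$ is nonempty and does not depend on $(x,\xi)$. Thus $Q(\cdot,\xi^{(i)})$ is a pointwise maximum of affine functions of $x$, hence convex. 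For any maximizer $\pi^\star$ at $x$, and any $x'$, we have $Q(x',\xi^{(i)})\ge \pi^{\star\top}(h-Ex'-U\xi^{(i)}) = Q(x,\xi^{(i)}) + (-E^\top\pi^\star)^\top(x'-x)$, which is exactly the subgradient inequality. This argument holds for every maximizer, so in particular for the tie-broken one that minimizes $\|E^\top\pi\|_1$. That tie-broken choice exists because the optimal face is a nonempty polyhedron and the objective $\|E^\top\pi\|_1$ is continuous and bounded below, so it attains its minimum on that polyhedron.

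The key step, and the one I expect to be the main obstacle, is producing a uniform bound $L_Q$ on the relevant dual multipliers, given that $\Pi$ may be unbounded. I would use the Minkowski decomposition $\Pi=\mathrm{conv}(V)+\mathrm{cone}(R)$, with finitely many vertices $V$ and extreme rays $R$. Finiteness of $Q$ on $\X\times\Xi$ forces $r^\top(h-Ex-U\xi)\le 0$ for every ray $r\in R$ and every $(x,\xi)\in\X\times\Xi$. Hence the maximum in~\eqref{QP} is always attained at some vertex, and $Q(x,\xi)=\max_{v\in V}v^\top(h-Ex-U\xi)$ on $\X\times\Xi$. I would then set $L_Q:=\max_{v\in V}\|v\|$, choosing the dual norm consistently with whatever operator norm $\|E\|,\|U\|$ denotes.

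With this representation, parts (1) and (2) follow directly: a maximum of finitely many affine functions, each $\|E^\top v\|$- (respectively $\|U^\top v\|$-) Lipschitz, is Lipschitz with constant at most $L_Q\|E\|$ (respectively $L_Q\|U\|$). For (1) this is clean on the convex set $\X$. For (2), when $\Xi$ is merely a finite set, the same vertex representation still holds at every point of $\Xi$, so the Lipschitz bound applies between scenarios.

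Part (3) is more delicate, because a general optimal $\pi$ may contain a ray component, and we only know such rays are orthogonal to the objective at the given $x$. Here the tie-break matters. Write an optimal $\pi=\sum\lambda_v v + \sum\mu_r r$. Since $r^\top(h-Ex-U\xi)\le 0$, optimality forces every ray with $\mu_r>0$ to have $r^\top(h-Ex-U\xi)=0$, and then dropping the ray part leaves a point $\sum\lambda_v v$ that is still optimal. Although dropping rays need not reduce $\|E^\top\pi\|_1$ coordinatewise, some optimal vertex is always available, so the minimal $\|E^\top\pi\|_1$ is at most $\max_{v}\|E^\top v\|_1$. To reconcile the $\ell_1$ tie-break with the $\|\cdot\|$ bound, I would either work in the $\ell_1$ norm throughout or absorb the norm-equivalence constant into $L_Q$, which is allowed since $L_Q$ is only claimed to exist.

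Finally, measurability of $x\mapsto\pi^\star(x,i)$: the argmin set is a compact-valued, upper-semicontinuous correspondence by Berge's theorem applied to the bounded optimal face. A measurable selection then exists by Kuratowski--Ryll-Nardzewski, or alternatively by a lexicographic tie-break, which yields a Borel selection. This step is routine and I would state it briefly.
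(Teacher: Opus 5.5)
Your derivation of~\eqref{QP}, the subgradient inclusion, and parts (1)--(2) via the vertex representation $Q(x,\xi)=\max_{v\in V}v^\top(h-Ex-U\xi)$ match the paper. For the norm bound in (3) you take a different but valid route. Optimality of the tie-break gives $\|E^\top\pi^\star(x,i)\|_1\le\|E^\top v^\star\|_1\le\max_{v\in V}\|E^\top v\|_1$ for any optimal vertex $v^\star$. The paper instead fixes one minimizer $\widehat\pi_{\texttt{Face}}$ per face of $\Pi$ and enlarges $L_Q$ to dominate these finitely many vectors. Your version has the advantage of working for any tie-broken minimizer. A minor point: ``continuous and bounded below'' does not by itself give attainment on an unbounded polyhedron. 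Use instead that minimizing $\|E^\top\pi\|_1$ over a face is an LP bounded below by $0$, as the paper does.

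The genuine gap is the measurability claim in (3). Berge's theorem does not apply. The feasible set of the tie-break problem is the optimal face $x\mapsto\arg\max_{\pi\in\Pi}\pi^\top(h-Ex-U\xi^{(i)})$. This correspondence is not lower hemicontinuous, and it need not be bounded, since it may contain rays with zero objective. Moreover, the conclusion you draw from Berge is false. Take $\Pi=\{\pi\in\mathbb R^2_+:\pi_1+\pi_2=1\}$, $h=0$, $U=0$, and $E=(-1,1)^\top$, so that $Q(x,\xi^{(i)})=|x|$. The tie-broken selection is $e_1$ for $x>0$, $(\tfrac12,\tfrac12)$ at $x=0$, and $e_2$ for $x<0$, so the argmin correspondence is not upper semicontinuous at $0$. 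Consequently you have not shown that the correspondence is measurable, which Kuratowski--Ryll-Nardzewski requires. Your lexicographic alternative is only asserted.

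The missing observation, which the paper uses, is the finite-face structure. The optimal face equals $\mathrm{conv}(V_{\mathrm{opt}})+\mathrm{cone}(R_0)$, where $V_{\mathrm{opt}}$ are the optimal vertices and $R_0$ the extreme rays with zero objective value. Hence it is one of finitely many faces of $\Pi$. The set of $(x,\xi)$ on which a given face is optimal is defined by finitely many affine equalities and strict inequalities, so it is Borel. The tie-break objective does not depend on $(x,\xi)$. Therefore picking one minimizer per face makes $\pi^\star(\cdot,i)$ piecewise constant on a finite Borel partition, hence Borel measurable. The same holds for the lexicographic minimizer, which exists since $\Pi\subseteq\mathbb R^m_+$.
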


We note that \(\pi^\star(x,i)\) in Lemma~\ref{lem:recourse_lipschitz_smoothing} can be found by solving an auxiliary linear program, which minimizes \(\|E^{\top}\pi\|_1\) over the polyhedron \(\{\pi \geq 0: G^{\top}\pi = b, \pi^\top\!\left(h - E x - U\,\xi^{(i)}\right) = Q(x, \xi^{(i)})\}\), where \(Q(x, \xi^{(i)})\) has been pre-computed in the linear program~\eqref{QP}.

\subsection{Mellowmax Approximation and Algorithm}
Exact separation is challenging because evaluating \(\max_{\xi \in \Xi} Q(x, \xi)\) demands maximizing a convex function. To this end, we propose the following Mellowmax approximation to ``soften'' the maximization operator and connect it with an expectation operator. 
For \(w>0\), we define
\[
\mathcal{M}_{w}(x) \; := \; 
w \log\!\left( \frac{1}{|\mathcal{I}|}\sum_{i=1}^{|\mathcal{I}|} 
    \exp\!\left\{\tfrac{Q\left(x,\xi^{(i)}\right)}{w}\right\} \right).
\]
By construction, \(\mathcal{M}_{w}\) replaces the maximization operator in the exact separation with a log-sum-exponential form. 
The following lemma quantifies its approximation error.
\begin{lemma}[Uniform error bound of Mellowmax,~\cite{pmlr-v70-asadi17a}]
\label{lem:mellowmax-approx}
For every \(x\in\mathcal X\) and \(w>0\), it holds that
\[
\max_{\xi \in \Xi} \, Q(x, \xi)-w\log|\mathcal I|
\ \le \
\mathcal{M}_{w}(x)
\ \le \
\max_{\xi \in \Xi} \, Q(x, \xi).
\]
Moreover,
\(
\lim_{w\to0^+}\mathcal{M}_{w}(x)
=
\max_{\xi \in \Xi} Q_(x, \xi).
\)
\end{lemma}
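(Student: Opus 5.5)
Fix \(x\in\mathcal X\) and \(w>0\). Write \(q_i := Q(x,\xi^{(i)})\), which is finite for every \(i\in\mathcal I\) by Assumption~\ref{assump:1}, and set \(q^\star := \max_{i\in\mathcal I} q_i = \max_{\xi\in\Xi}Q(x,\xi)\). The plan is to sandwich the average inside the logarithm between two elementary bounds and then use monotonicity of \(w\log(\cdot)\). Both bounds come from comparing each term \(\exp\{q_i/w\}\) with the largest term \(\exp\{q^\star/w\}\).

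For the upper bound, note that \(q_i\le q^\star\) for every \(i\). Hence \(\exp\{q_i/w\}\le \exp\{q^\star/w\}\), and averaging gives
\(\frac{1}{|\mathcal I|}\sum_i \exp\{q_i/w\}\le \exp\{q^\star/w\}\).
Since \(w>0\) and the logarithm is increasing, this yields \(\mathcal M_w(x)\le q^\star\).

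For the lower bound, keep only the index \(i^\star\) that attains the maximum and drop the other terms, which are all positive. This gives
\(\frac{1}{|\mathcal I|}\sum_i \exp\{q_i/w\}\ge \frac{1}{|\mathcal I|}\exp\{q^\star/w\}\).
Applying \(w\log(\cdot)\) then gives \(\mathcal M_w(x)\ge q^\star - w\log|\mathcal I|\).

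The limit statement follows by a squeeze: as \(w\to0^+\), the lower bound \(q^\star-w\log|\mathcal I|\) tends to \(q^\star\), and the upper bound is \(q^\star\) itself.

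The only step needing care is that \(\mathcal I\) must be finite and nonempty, so that \(\log|\mathcal I|\) is finite and the maximum over \(\Xi\) is attained. This holds because \(\Xi\) is a finite scenario set in this section. Apart from that, the argument is a routine two-line computation, and I do not expect a genuine obstacle.
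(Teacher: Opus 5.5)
Your proof is correct and matches the paper's approach. The paper only cites Asadi and Littman for this lemma, but its general version (Proposition~\ref{prop:continuous-mellowmax}) uses your two comparisons: bound every exponential by the largest one for the upper bound, and keep only the mass at the maximizer for the lower bound, which under uniform weights on a finite \(\mathcal I\) is the single term with weight \(1/|\mathcal I|\), giving the \(w\log|\mathcal I|\) loss and the squeeze as \(w\to0^+\).
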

Lemma~\ref{lem:mellowmax-approx} suggests that \(\mathcal{M}_{w}(x)\) approximates \(\max_{\xi \in \Xi}Q_(x, \xi)\) to any prescribed accuracy by choosing a sufficiently small \(w\). Notably, the approximation error depends \emph{logarithmically} on \(|\mathcal{I}|\), making \(\mathcal{M}_{w}(x)\) appealing for large uncertainty sets. We therefore consider the following optimization problem,
\begin{equation}
\min_{x \in \mathcal{X}} \;\; F_{w} (x) \ := \ f(x) 
+ w \log\!\left( \frac{1}{|\mathcal{I}|}\sum_{i=1}^{|\mathcal{I}|} 
    \exp\!\left\{ \tfrac{Q\left(x,\xi^{(i)}\right)}{w} \right\} \right) \tag{Approx} \label{main-approx}
\end{equation}
using \(F_{w} (x)\) as a surrogate of the original robust objective function in~\eqref{main}. 
Notice that \(F_{w}\) is convex but need not be differentiable on
$\mathcal X$. Hence, a natural thought is to apply the standard
projected subgradient method to~\eqref{main-approx} and generate
a near-optimal solution to~\eqref{main}. However, an examination of a subgradient reveals that
\[
\begin{aligned}
\partial F_w(x) \ \ni \ z_w(x)
&:=
g_f(x)
+\sum_{i\in\mathcal I}
\frac{\exp(Q(x,\xi^{(i)})/w)}
{\sum_{j\in\mathcal I}\exp(Q(x,\xi^{(j)})/w)}
\left(-E^\top\pi^\star(x,i)\right)
\\
&=
\sum_{i\in\mathcal I}
\left(g_f(x)-E^\top\pi^\star(x,i)\right)p_x(i),
\end{aligned}
\]
where $g_f(x)\in\partial f(x)$ is a bounded measurable
subgradient selection and
\[
p_x(i)
:=
\frac{\exp\left(Q(x,\xi^{(i)})/w\right)}
{\sum_{j\in\mathcal I}\exp\left(Q(x,\xi^{(j)})/w\right)}.
\]
A direct implementation would necessitate enumerating all
scenarios in $\Xi$ at every step of the subgradient method,
which is prohibitively expensive. If we resort to a stochastic
approximation, then in each iteration $n$ we sample an
$I_{n+1}\in\mathcal I$ conditionally on $\mathcal F_n$ according to
\(
\mathbb P\bigl(I_{n+1}=i\mid\mathcal F_n\bigr)
=
p_{x_n}(i)
\)
and generate a random proxy 
\(
Z_{n+1}
:=
g_f(x_n)-E^\top\pi^\star(x_n,I_{n+1})
\)
for the selected subgradient. Here, \(x_n\) is the decision incumbent at iteration \(n\) and \(\mathcal F_n\) denotes the natural filtration generated by the
algorithm. By construction,
\(
\mathbb E\bigl[Z_{n+1}\mid\mathcal F_n\bigr]
\in\partial F_w(x_n),
\)
so that
\(
x_{n+1}
=
\Pi_{\mathcal X}\bigl(x_n-\alpha_n Z_{n+1}\bigr)
\)
is a stochastic projected subgradient update with an
unbiased subgradient estimator.  

Unfortunately, this construction does not resolve the computational challenge for two reasons. First, a direct sampling from \(p_{x_n}\) requires evaluating all scenario weights and consequently computing all \(Q(x, \xi^{(i)})\). 
Second, the target distribution \(p_{x_n}\) changes with the iterate \(x_n\). 
Thus, even when a suitable sampling method is available, we may need to rerun it in every
iteration and repeating this procedure offsets much of the computational gain. 

As an alternative, we propose to not insist on an exact and unbiased gradient estimate in every iteration. Instead, we maintain an inexact estimate of \(p_{x_n}(i)\) using a time-inhomogeneous Markov chain. This chain favors scenarios \(i \in \mathcal{I}\) with larger \(Q(x_n, \xi^{(i)})\) values, which imitate \(p_{x_n}(i)\). In addition, it co-evolves with \(x_n\) throughout the algorithm and coincides with \(p_{x_n}(i)\) in the limit. 
Specifically, we adopt a sample-rejection procedure with a proposal distribution \(q(j|i)\) and an acceptance distribution \(A_x(i, j)\). Iteratively, when the Markov chain is at state \(i \in \mathcal{I}\), we first sample a proposal state \(j \in \mathcal{I}\) according to \(q(\cdot|i)\) and then accept or reject the new state \(j\) with probability \(A_x(i, j)\). This gives rise to a parameterized Markov kernel \(\{P_x: x \in \mathcal{X}\}\) with, for all \(i, j \in \mathcal{I}\),
\[
P_x(i,j) \ := \ 
\begin{cases}
q(j\mid i)\,A_x(i,j), & \text{if } j\neq i\\[3pt]
1-\sum_{k\neq i}q(k\mid i)\,A_x(i,k), & \text{if } j=i.
\end{cases}
\]
For fixed \(x\), the Markov chain induced by \(P_x\) is irreducible whenever $q(j\mid i)>0$ for all \(i, j \in \mathcal{I}\) and aperiodic if $q(i\mid i)>0$. In particular, for a finite \(\mathcal{I}\), irreducibility guarantees the existence of a unique
stationary distribution, and we shall henceforth assume that such uniqueness holds. We give two examples for such a Markov chain.
\begin{example}[Metropolis--Hastings (MH) Acceptance] \label{ex:MH-proposal}
For fixed proposal distribution \(q(\cdot|\cdot)\) and incumbent \(x\), the MH acceptance probability puts
\[
A_x^{\mathrm{MH}}(i,j)
\ := \ \min\!\left\{\,1,\,
\frac{p_x(j)\,q(i\mid j)}{p_x(i)\,q(j\mid i)}\right\} = \ \min\left\{1,\exp\!\Big(\tfrac{Q \left(x,\xi^{(j)}\right)-Q\left(x,\xi^{(i)}\right)}{w}\Big) \cdot \frac{q(i \mid j)}{q(j \mid i)}\right\},
\quad \forall i,j\in\mathcal{I}.
\]
It follows that 
$p_x(i)P_x(i,j)=p_x(j)P_x(j,i)$ for all $i,j$
and hence $p_x$ is the stationary distribution of the Markov chain. \hfill \(\Box\)
\end{example}

\begin{example}[Barker Acceptance] \label{ex:barker-proposal}
A smoother acceptance rule proposed by Barker puts
\[
A_x^{\mathrm{B}}(i,j)
\ := \ \frac{r_x(i,j)}{1+r_x(i,j)}, \quad \text{where}\ 
r_x(i,j) \ := \ \frac{p_x(j)\,q(i\mid j)}{p_x(i)\,q(j\mid i)} \ = \ 
\exp\!\Big(\tfrac{Q\left(x,\xi^{(j)}\right)-Q\left(x,\xi^{(i)}\right)}{w}\Big)\cdot\frac{q(i \mid j)}{q(j \mid i)}
\]
for all \(i, j \in \mathcal{I}\). It follows that \(p_x(i)P_x(i,j)=p_x(j)P_x(j,i)\) and so the Markov chain admits \(p_x\) as the stationary distribution. \hfill \(\Box\)
\end{example}
Examples~\ref{ex:MH-proposal}--\ref{ex:barker-proposal} demonstrate that the proposed procedure does not demand evaluating all the recourse functions \(Q(x, \xi^{(i)})\) in each iteration. In fact, under either MH or Barker acceptance, one evaluates the function exactly twice, one for the current state \(i\) and the other for the proposed state \(j\). This equals solving two linear programs~\eqref{QP} and hence is computationally much cheaper than the exact gradient estimation or its stochastic approximation. We shall rely on either MH or Barker acceptance distributions in the remainder of the paper. 

In addition, the proposed procedure is different from a conventional Markov chain Monte Carlo procedure, which 
would freeze \(x_n\) and apply
\(P_{x_n}\) for many transitions, possibly after a burn-in period, to obtain
a sample close to following the stationary distribution \(p_{x_n}\). Since \(p_{x_n}\)
changes with \(x_n\), this inner sampling procedure would incur higher computational burden in every optimization iteration. Instead, we carry \(x_n\) and \(P_{x_n}\) forward and apply only a small number of transitions, possibly just one, before updating \(x_n\). The ensuing Markov chain and the optimization iterates therefore evolve on the same time scale. Concretely, after sampling a scenario \(I_{n+1}\) according to \(P_{x_n}\), we form
\[
Z_{x_n}(I_{n+1})
:=
g_f(x_n)-E^\top\pi^\star(x_n,I_{n+1})
\]
and update
\[
x_{n+1}
=
\Pi_{\mathcal X}\bigl(
x_n-\alpha_{n+1}Z_{x_n}(I_{n+1})
\bigr).
\]
based on a certain step size \(\alpha_{n+1}\) to be specified later. This yields a coupled framework in which the state is carried
forward to identify an informative scenario, while the ensuing
scenario-wise gradient updates the decision incumbent. We summarize the complete
framework in Algorithm~\ref{alg:MH-SGD2}.

\begin{algorithm}[h]
\caption{Projected Stochastic Subgradient Descent with Soft Separation}
\label{alg:MH-SGD2}
\begin{algorithmic}[1]
\Require Proposal kernel $q(\cdot\mid\cdot)>0$, $x_0$, parameters $w>0$, iteration budget $N$, state space \(\I\)
\Ensure Weighted average iterate $\bar x_N$
\State $x\gets x_0$, $I_0\sim\operatorname{Unif}\{\I\}$, $S\gets0$, $\bar x\gets0$
\For{$n=0,\ldots,N-1$}
    \State $\alpha_{n+1}\gets(n+1)^{-1/2}$ and sample $J_n\sim q(\cdot\mid I_n)$
    \State Evaluate $Q(x,\xi^{(I_n)})$ and $Q(x,\xi^{(J_n)})$ by solving \eqref{QP}
    \State Compute the MH or Barker acceptance probability $A_x(I_n,J_n)$
    \State With probability $A_x(I_n,J_n)$, set $I_{n+1}\gets J_n$
    \State Obtain the optimal dual solution  \(\pi^\star(x,\xi^{(I_{n+1})})\) from \eqref{QP}
  
    \State $Z_{x_n}(I_{n+1})\gets g_f(x)-E^\top\pi^\star(x,\xi^{(I_{n+1})})$
    \State $x\gets\Pi_{\mathcal X}\!\left(x-\alpha_{n+1}Z_{x_n}(I_{n+1})\right)$
    \vspace{0.5em}
    \State $ \begin{cases}
        S\gets S+\alpha_{n+1},
          \ \bar x\gets\bar x+\alpha_{n+1}x \quad &\text{(Step-size Average)}\\[0.5em]
        S\gets S+(n+1), \ 
    \bar{x}\gets\bar{x}+(n+1)x \quad &\text{(Linear Average)}
    \end{cases}
    $
    \vspace{0.5em}
\EndFor
\State \Return $\bar x_N\gets\bar x/S$
\end{algorithmic}
\end{algorithm}

\subsection{Theoretical Guarantee}
Although the one-step Markov sample \(Z_{n+1}\) generally does not yield a conditionally unbiased subgradient estimate, our analysis shows that the resulting bias remains controllable because the chain tracks the evolving target distribution \(p_{x_n}\) sufficiently closely. Problems of this type, in which first-order estimates are constructed from observations generated by a
decision-dependent Markov chain, arise naturally in policy-gradient reinforcement learning~\citep{SuttonBarto2018,CheDongTong2026}, state-dependent performative prediction~\citep{LiWai2022}, and queueing and inventory systems~\citep{LiEtAl2026}. Our problem of interest~\eqref{main} does not by itself possess such an adaptive data structure, nor does it admit a direct stochastic subgradient implementation of this form. Rather, this structure emerges from the specific soft-separation mechanism developed in the last subsection.

We establish theoretical guarantee in two major steps.  We first use a Poisson-equation decomposition to control the
cumulative error induced by the time-inhomogeneous Markov chain. Then,
we combine this error bound with the first-order recursion to
establish convergence of Algorithm~\ref{alg:MH-SGD2}.

To proceed, we first introduce the following lemma.

\begin{lemma}[Bounded subgradients of $F_w$]\label{prop:L-smooth}
Under Assumptions~\ref{assump:5}--\ref{assump:1}, $F_w$ is convex on $\mathcal X$ and admits the subgradient selection $z_w(x)
:=
\sum_{i\in\mathcal I}
p_x(i)\left(g_f(x)-E^\top\pi^\star(x,i)\right)
\in\partial F_w(x).$ It holds that 
\[
\|Z_x(i)\|\leq L_f+L'_Q,
\quad 
\|z_w(x)\|\leq L_f+L'_Q.
\]
\end{lemma}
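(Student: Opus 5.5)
We prove the three claims in turn: convexity of $F_w$, validity of $z_w(x)$ as a subgradient, and the two norm bounds.

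\emph{Convexity.} By Lemma~\ref{lem:recourse_lipschitz_smoothing}, each $Q(\cdot,\xi^{(i)})$ is a pointwise maximum of affine functions of $x$ (representation~\eqref{QP}). It is therefore convex, and finite on $\X$ by Assumption~\ref{assump:1}. The map $u\mapsto w\log\bigl(\frac{1}{|\I|}\sum_i e^{u_i/w}\bigr)$ is convex and nondecreasing in each coordinate. Its composition with the convex vector map $x\mapsto (Q(x,\xi^{(i)}))_{i\in\I}$ is therefore convex. Adding the convex $f$ (Assumption~\ref{assump:5}) shows that $F_w$ is convex.

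\emph{Subgradient.} We apply the chain rule for a differentiable, coordinatewise nondecreasing convex outer function composed with convex inner functions. The gradient of the log-sum-exp map at $u=(Q(x,\xi^{(i)}))_i$ is exactly $(p_x(i))_{i\in\I}$, and these weights are nonnegative. Hence for any $y\in\X$,
\[
\mathcal M_w(y)\ \ge\ \mathcal M_w(x)+\sum_i p_x(i)\bigl(Q(y,\xi^{(i)})-Q(x,\xi^{(i)})\bigr)\ \ge\ \mathcal M_w(x)+\Bigl\langle \sum_i p_x(i)\bigl(-E^\top\pi^\star(x,i)\bigr),\,y-x\Bigr\rangle .
\]
The first inequality is the gradient inequality for the convex outer map. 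The second uses $p_x(i)\ge 0$ together with $-E^\top\pi^\star(x,i)\in\partial_x Q(x,\xi^{(i)})$ from Lemma~\ref{lem:recourse_lipschitz_smoothing}. Adding $g_f(x)\in\partial f(x)$ and using $\sum_i p_x(i)=1$ gives $z_w(x)\in\partial F_w(x)$.

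\emph{Norm bounds.} The Lipschitz property of $f$ on an open neighborhood of $\X$ gives $\|g_f(x)\|\le L_f$. This holds because subgradients at interior points of the Lipschitz domain are bounded by the Lipschitz constant, measured in the dual norm of the norm used for Lipschitz continuity; we take the same norm convention as in Lemma~\ref{lem:recourse_lipschitz_smoothing}. Lemma~\ref{lem:recourse_lipschitz_smoothing}(3) gives $\|E^\top\pi^\star(x,i)\|\le L_Q'$. The triangle inequality then yields $\|Z_x(i)\|\le L_f+L_Q'$. Since $z_w(x)=\sum_i p_x(i)Z_x(i)$ is a convex combination, convexity of the norm gives the same bound for $\|z_w(x)\|$.

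The only delicate point is the subgradient chain rule. It relies on monotonicity of the outer map, which ensures the nonnegative weights preserve the inequality direction, and on $p_x$ being exactly its gradient. The remaining steps are routine bookkeeping.
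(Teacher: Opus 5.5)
Your proposal is correct and follows the paper's proof. The paper's Jensen step, $w\log\sum_i p_x(i)e^{(Q(y,\xi^{(i)})-Q(x,\xi^{(i)}))/w}\ge\sum_i p_x(i)\bigl(Q(y,\xi^{(i)})-Q(x,\xi^{(i)})\bigr)$, is exactly your gradient inequality for log-sum-exp at the softmax weights $p_x$; it is followed by the same weighted subgradient inequalities and the same directional argument giving $\|g_f(x)\|\le L_f$. The only difference is that you prove convexity separately via the monotone composition rule, whereas in the paper it follows implicitly from the subgradient inequality established at every point of $\mathcal X$.
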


Next, we analyze the error induced by the one-step Markov sample through a centered function-value difference. Fix an $x^\star\in\argmin_{x\in\mathcal X}F_w(x)$ and denote 
\[
e_{n+1}
:=
Q\left(x_n,\xi^{(I_{n+1})}\right)-Q\left(x^\star,\xi^{(I_{n+1})}\right)
-
\sum_{j\in\mathcal I}p_{x_n}(j)
\bigl(Q(x_n,\xi^{(j)})-Q(x^\star,\xi^{(j)})\bigr),
\]
the centered recourse-value difference. We recall that \(e_{n+1}\) is generally neither
independent nor conditionally mean-zero unless the sampling distribution coincides with the stationary distribution \(p_{x_n}\). 
We analyze the transient behavior of \(e_{n+1}\) through the so-called fundamental matrix
\(
\big( \operatorname{Id} - P_x + \mathbf{1}\,p_x^{\top} \big)^{-1}
\) of a Markov chain. 
For a finite and irreducible Markov kernel \(P_x\), it holds that 
\[
\big( \operatorname{Id} - P_x + \mathbf{1}\,p_x^{\top} \big)^{-1} - \mathbf{1}\,p_x^{\top} \ = \ \sum_{n=0}^{\infty} \left(P_x^n - \mathbf{1}\,p_x^{\top}\right),
\]
where \(P_x^n - \mathbf{1}\,p_x^{\top}\) evaluates the difference between the \(n\)-step Markov kernel and the stationary distribution and hence the right-hand side measures the accumulated (transient) deviation of the Markov chain from its stationarity. Hence, a bounded accumulated deviation demands us to control the magnifying power of the fundamental matrix, which we formalize below as an assumption.




\begin{assumption}[Uniform norm bound of the fundamental matrix]\label{ass:invertibility}
With respect to any fixed operator norm, e.g., $\|\cdot\|_{\infty\to \infty}$, it holds that
\[
\sup_{x\in\mathcal X}\big\|\,(\operatorname{Id}-P_x+1 p_x^\top)^{-1}\big\| \;\le\; C_* 
\]
for a constant $0 < C_* < \infty$.
\end{assumption}
The validity of Assumption~\ref{ass:invertibility} and the uniform norm bound \(C_*\) depend on \(P_x\) and hence on the proposal kernel \(q(\cdot|\cdot)\). In other words, different proposal and acceptance distributions give rise to different \(C_*\). Next, we recall a general approach to bounding \(C_*\) and then elaborate its applications to various proposal kernels.
\begin{lemma}[Doeblin minorization]\label{lem:doeblin}
If $P_x$ satisfies the Doeblin minorization condition
\[
P_x(i,j)\;\ge\;\gamma_x\,\nu_x(j) \quad \forall i,j \in \mathcal{I}
\]
for some constant \(\gamma_x>0\) and probability measure \(\nu_x\) on \(\mathcal{I}\), then it holds that \(\big\|(\operatorname{Id}-P_x+\mathbf 1 p_x^\top)^{-1}\big\|_{\infty\to \infty} \leq 2/\gamma_x\).
\end{lemma}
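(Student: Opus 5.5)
Fix $x$ and write $P:=P_x$, $p:=p_x$, $\Pi:=\mathbf 1 p^\top$. My plan is to show directly that the Neumann-type series for the fundamental matrix converges geometrically in $\|\cdot\|_{\infty\to\infty}$, with total mass at most $2/\gamma_x$. The minorization $P(i,j)\ge\gamma_x\nu_x(j)$ will give a Dobrushin contraction coefficient $\delta(P)\le 1-\gamma_x$. Geometric decay of $P^n-\Pi$ then controls the inverse.

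\textbf{Step 1 (inverse as a series).} Because $p^\top P=p^\top$ and $P\mathbf 1=\mathbf 1$, we have $P\Pi=\Pi P=\Pi^2=\Pi$. Hence $(P-\Pi)^n=P^n-\Pi$ for every $n\ge1$. It follows that whenever $\sum_n\|(P-\Pi)^n\|<\infty$,
\[
(\operatorname{Id}-P+\Pi)^{-1}=\sum_{n\ge0}(P-\Pi)^n=\operatorname{Id}+\sum_{n\ge1}(P^n-\Pi).
\]
(The displayed identity in the paper is the same statement, with the $n=0$ term absorbing $\Pi$.)

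\textbf{Step 2 (Doeblin contraction).} I will write $P=\gamma_x\mathbf 1\nu_x^\top+(1-\gamma_x)R$, where $R$ is a stochastic matrix; $R$ has nonnegative entries precisely by the minorization. For two rows $i,k$, the common part $\gamma_x\nu_x$ cancels, so
\[
\tfrac12\|P(i,\cdot)-P(k,\cdot)\|_1\le 1-\gamma_x,
\]
i.e. $\delta(P)\le1-\gamma_x$. For any vector $v$, row $i$ of $(P^n-\Pi)v$ equals $\sum_k p(k)\,[P^n(i,\cdot)-P^n(k,\cdot)]v$. By submultiplicativity of the Dobrushin coefficient, $\|P^n(i,\cdot)-P^n(k,\cdot)\|_1\le 2(1-\gamma_x)^n$. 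Therefore
\[
\|P^n-\Pi\|_{\infty\to\infty}\le 2(1-\gamma_x)^n .
\]

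\textbf{Step 3 (summing, and the obstacle).} The naive bound $1+\sum_{n\ge1}2(1-\gamma_x)^n=1+2(1-\gamma_x)/\gamma_x=(2-\gamma_x)/\gamma_x$ is already at most $2/\gamma_x$. So the main care goes into Step 2, specifically the factor-$2$ bookkeeping when passing from total variation to the $\infty$-operator norm and including the $n=0$ term.

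A cleaner route, if needed, is to use oscillation seminorms: $P$ contracts $\operatorname{osc}(v)$ by the factor $1-\gamma_x$, and $\|(P^n-\Pi)v\|_\infty\le\operatorname{osc}(P^nv)$. This yields the bound $\|P^n-\Pi\|_{\infty\to\infty}\le 2(1-\gamma_x)^n$ with the same constant.

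Either way the series converges, which justifies the Neumann expansion in Step 1, and we conclude
\[
\big\|(\operatorname{Id}-P_x+\mathbf 1p_x^\top)^{-1}\big\|_{\infty\to\infty}\le 2/\gamma_x .
\]
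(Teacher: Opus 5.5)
Your proof is correct and follows essentially the same route as the paper: write $(\operatorname{Id}-P_x+\mathbf 1p_x^\top)^{-1}$ as the Neumann series in $P_x-\mathbf 1p_x^\top$, bound the $t$-th term by $2\tau(P_x)^t$ through the Dobrushin coefficient, sum to $(1+\tau)/(1-\tau)\le 2/(1-\tau)$, and use $\tau(P_x)\le 1-\gamma_x$. The differences are minor. You justify convergence of the series directly from the summable bound $\|P_x^n-\mathbf 1p_x^\top\|_{\infty\to\infty}\le 2(1-\gamma_x)^n$, which avoids the paper's spectral-radius argument and its appeal to irreducibility and aperiodicity. You also prove the Doeblin-to-Dobrushin step via $P_x=\gamma_x\mathbf 1\nu_x^\top+(1-\gamma_x)R$, where the paper cites it.
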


\begin{example}[Independent Proposal]
\label{example:1.3}
Consider an independent proposal with $q(j\mid i)\equiv q(j)$ and suppose that there exists a $c\ge 1$, independent of $x$, such that
\[
c^{-1}p_x(j)\ \le\ q(j)\ \le\ c\,p_x(j),\qquad \forall x\in\mathcal X,\ \forall j\in\mathcal I.
\]
Then for all $x$ and $i\ne j$, it holds that
\(
r_x(i,j)=\frac{p_x(j)q(i)}{p_x(i)q(j)}\in[c^{-2},c^{2}].
\)

First, for the MH acceptance (see Example~\ref{ex:MH-proposal}), we have \(A_x^{\mathrm{MH}}(i,j)=\min\{1,r_x(i,j)\} \geq \min\{1,c^{-2}\} = c^{-2}\). Then, 
\(
P_x(i,j) = q(j)\,A_x^{\mathrm{MH}}(i,j)
\ge\left(1/c^2\right)q(j)
\)
whenever \(i \neq j\).

Second, for the Barker acceptance (see Example~\ref{ex:barker-proposal}), we have \(
A_x^{\mathrm{B}}(i,j)
=\frac{r_x(i,j)}{1+r_x(i,j)}
\;\ge\;\frac{c^{-2}}{1+c^{-2}}
\;=\;\frac{1}{1+c^{2}}
\). Then, 
\(
P_x(i,j) = q(j)\,A_x^{\mathrm{B}}(i,j)
\ge\left(1/(1+c^2)\right)q(j)
\)
whenever \(i \neq j\).

Finally, for both MH and Barker acceptances, we have $0\le A_x^{\mathrm{MH}}(i,j), A_x^{\mathrm{B}}(i,j)\le 1$ for all $i\neq j$. Hence, by definition of the transition kernel,
\(
P_x(i,i)
= 1 - \sum_{j\neq i} q(j)\,A_x(i,j)
\ge 1 - \sum_{j\neq i} q(j)
= q(i),
\)
where \(A_x(i,j)\) refers to either \(A_x^{\mathrm{MH}}(i,j)\) or \(A_x^{\mathrm{B}}(i,j)\).

Therefore, the Doeblin minorization holds, uniformly on \(\X\), for the MH acceptance with $\gamma_{\mathrm{MH}}=1/c^2$ and 
$\nu_{\mathrm{MH}}(j)=q(j)$, and for the Barker acceptance with 
$\gamma_{\mathrm{B}}=1/(1+c^2)$ and $\nu_{\mathrm{B}}(j)=q(j)$. It follows that \(C_*^{\mathrm{MH}} \leq 2c^2\) and \(C_*^{\mathrm{B}} \leq 2(1+c^2)\). \hfill \(\Box\)

\end{example}

\begin{example}[Approximately Correct Proposal]\label{ex:ratio-aligned}
Consider a proposal distribution that is approximately correct, in the sense that
\[
\frac{1}{\kappa}\frac{p_x(i)}{p_x(j)}\ \le\ \frac{q(i\mid j)}{q(j\mid i)}\ \le\ 
\kappa\frac{p_x(i)}{p_x(j)} \quad \forall i,j \in \I
\]
for a constant \(\kappa \geq 1\). In other words, \(q(\cdot \mid \cdot)\) is largely aligned with the target ratios. Then, $r_x(i,j)\in[1/\kappa,\kappa]$ and so
\(
A_x^{\mathrm{MH}}(i,j)\ge \frac{1}{\kappa},
\)
\(
A_x^{\mathrm{B}}(i,j)\ge \frac{1}{1+\kappa}
\). 
Define \(q_{\min}(j):=\inf_{i \in \I} q(j\mid i)>0\). Then, \(P_x(i,i)\ge q(i\mid i)\) and whenever \(i \neq j\), it holds that
\begin{align*}
P_x(i,j) \ \ge \ \kappa^{-1}q_{\min}(j) \ =: \ b_j^{\mathrm{MH}} \quad & \quad (\text{MH}) \\
P_x(i,j) \ \ge \ (1+\kappa)^{-1}q_{\min}(j) \ =: \ b_j^{\mathrm{B}}. \quad & \quad (\text{Barker})
\end{align*}
Hence, the Doeblin minorization holds, uniformly on \(\X\), for the MH acceptance with $\gamma_{\mathrm{MH}} := \sum_j \min\{b_j^{\mathrm{MH}}, q(j\mid j)\}$ and $\nu_{\mathrm{MH}}(j) := \min\{b_j^{\mathrm{MH}}, q(j\mid j)\}/\gamma_{\mathrm{MH}}$, and for the Barker acceptance with $\gamma_{\mathrm{B}} := \sum_j \min\{b_j^{\mathrm{B}}, q(j\mid j)\}$ and $\nu_{\mathrm{B}}(j) := \min\{b_j^{\mathrm{B}}, q(j\mid j)\}/\gamma_{\mathrm{B}}$. Therefore, \(C_*^{\mathrm{MH}} \leq 2/\gamma_{\mathrm{MH}}\) and \(C_*^{\mathrm{B}} \leq 2/\gamma_{\mathrm{B}}\). \hfill \(\Box\)
\end{example}

\begin{example}[Uniform Proposal]\label{ex:uniform-proposal}
Consider a uniform proposal with \(q(j\mid i) = |\I|^{-1}\) for all \(i,j \in \I\). Denote $\Delta:=\sup_{x \in \X}\max_{i,j \in \I}|Q(x, \xi^{(i)})-Q(x, \xi^{(j)})|$. Then, \(\Delta < \infty\) by Assumption~\ref{assump:1} and \(A_x^{\mathrm{MH}}(i,j)\ge e^{-\Delta/w}\), \(A_x^{\mathrm{B}}(i,j)\ge (1+e^{\Delta/w})^{-1}\). It follows that
\begin{align*}
P_x(i,j)\ge \frac{e^{-\Delta/w}}{|\mathcal I|}\quad(\mathrm{MH}),\qquad
P_x(i,j)\ge \frac{(1+e^{\Delta/w})^{-1}}{|\mathcal I|}\quad(\mathrm{Barker}).
\end{align*}
Therefore, the Doeblin minorization holds, uniformly on \(\X\), for the MH acceptance with $\gamma_{\mathrm{MH}}=e^{-\Delta/w}$ and 
$\nu_{\mathrm{MH}}(j)=q(j)$, and for the Barker acceptance with 
$\gamma_{\mathrm{B}}=(1+e^{\Delta/w})^{-1}$ and $\nu_{\mathrm{B}}(j)=q(j)$. It follows that \(C_*^{\mathrm{MH}} \leq 2e^{\Delta/w}\) and \(C_*^{\mathrm{B}} \leq 2(1+e^{\Delta/w})\).

Furthermore, \(C_*\) can be bounded without using \(\Delta\). Define \(p_{\max}:=\sup_{x\in\mathcal X}\max_{i\in\mathcal I} p_x(i)\). Then, \(A_x^{\mathrm{MH}}(i,j) =\min\left\{1,\frac{p_x(j)}{p_x(i)}\right\} \ge p_x(j)/p_{\max}\) for all \(i,j \in \I\). It follows that \(P_x(i,j) \ge p_x(j)/(|\mathcal I|p_{\max})\), that is, the Doeblin minorization holds for the MH acceptance with $\gamma_{\mathrm{MH}}=1/(|\mathcal I|p_{\max})$ and $\nu_{\mathrm{MH}}(j)=p_x(j)$. Likewise, it holds for the Barker acceptance with $\gamma_{\mathrm{B}}=1/(2|\mathcal I|p_{\max})$ and $\nu_{\mathrm{B}}(j)=p_x(j)$. These strengthen the bounds for \(C_*\) to be \(C_*^{\mathrm{MH}} \leq \min\{2e^{\Delta/w}, 2|\I|p_{\max}\}\) and \(C_*^{\mathrm{B}} \leq \min\{2(1+e^{\Delta/w}), 4|\I|p_{\max}\}\). \hfill \(\Box\)
\end{example}

The independent proposal (Example~\ref{example:1.3}) and the approximately correct proposal (Example~\ref{ex:ratio-aligned}) are useful when we can roughly locate the ``bad'' scenarios \(\xi\) with a large \(Q(x, \xi)\), because their ensuing \(p_x(\cdot)\) values are significantly larger than those of the ``good'' scenarios. For example, if \(Q(x, \xi)\) models the total cost of resource allocation (such as staffing and workforce scheduling) with random demands \(\xi\), then \(Q(x, \xi)\) is monotone (entrywise) in \(\xi\) and a scenario becomes worse as it increases~\citep{ryu2025nurse,aykin1996optimal}. In this case, a reasonable proposal distribution sets \(q(j)\) (respectively, \(q(j\mid \cdot)\)) higher for a ``bad'' scenario \(\xi^{(j)}\) that is entrywise maximal. This yields a small \(c\) (respectively, \(\kappa\)), which in turn implies a small \(C_*\).


On the other hand, one can apply the uniform proposal (Example~\ref{ex:uniform-proposal}) without any prior knowledge about~\eqref{main} or bounds learned from intermediate iterations of Algorithm~\ref{alg:MH-SGD2}. We adopt both uniform and approximately correct proposals for the numerical experiments reported in Section~\ref{sec:experiments}. In the following example, we discuss how to sample uniformly from \(\Xi\) (or its extreme points or boundary) for several commonly-applied uncertainty sets.
\begin{example}[Uniformly Sampling An Uncertainty Set]

\begin{enumerate}
    \item If \(\Xi\) is discrete and finite with \(\Xi = \{\xi^{(i)}: i \in \I\}\), then sampling uniformly from \(\Xi\) is equivalent to doing so from \(\I\).
    \item Consider a box uncertainty set with \(\Xi = \{\xi: \ell \leq \xi \leq u\}\) and we seek to sample uniformly from its extreme points. Notice that \(\Xi\) is scaled from an elementary \(\ell_{\infty}\)-ball because \(\Xi = \{\xi: \xi = \ell + Du, \ u \in[0,1]^{d_\xi}\}\), where \(D := \operatorname{diag}(u-\ell)\). We first sample \(u_i \sim \operatorname{Bernoulli}(1/2)\) independently for each entry of \(u\). Then, the ensuing \(\xi = \ell + Du\) is a uniform sample from the \(\Xi\)-extreme points.
    \item Consider a scaled \(\ell_1\) uncertainty set with \(\Xi = \{\xi: \xi = c + Du, \ \|u\|_1 \leq 1\}\), where \(D = \operatorname{diag}(D_1, \ldots, D_{d_{\xi}})\) and we seek to sample uniformly from its extreme points. To this end, we sample an \(i \sim \operatorname{Unif}\{1, \ldots, d_{\xi}\}\) and the ensuing \(\xi = c \pm D e_i\) is a uniform sample from the \(\Xi\)-extreme points.
    \item Consider a two-sided budget uncertainty set \(\Xi = \{\xi: |\xi_i| \leq 1, \sum_{i=1}^{d_{\xi}} |\xi_i| \leq \Gamma\}\), where \(\Gamma := k + \rho\) denotes the ``budget of uncertainty''~\citep{bertsimas2004price} with an integer \(k \in [d_{\xi}]\) and a fractional \(\rho \in [0, 1)\). In order to sample uniformly from the extreme points of \(\Xi\), we first notice that every extreme point has exactly (i) \(k\) entries equal to \(+1\) or \(-1\), (ii) an additional entry equal to \(+\rho\) or \(-\rho\), and (iii) all remaining entries equal to \(0\). Hence, we first sample a \(k\)-element subset \(S \subseteq [d_{\xi}]\) uniformly, and then choose \(j \in [d_{\xi}]\setminus S\) uniformly. Finally, we assign values to the \(k+1\) chosen entries uniformly from \(\{+1, -1\}\) to obtain a random sub-vector \(s \in \mathbb{R}^{k+1}\). Then, the ensuing \(\xi\) with
    \[
    \xi_i = \left\{\begin{array}{ll}
        s_i & \quad \text{if \(i \in S\)} \\[0.5em]
        \rho s_j & \quad \text{if \(i = j\)} \\[0.5em]
        0 & \quad \text{otherwise}
    \end{array}\right.
    \]
    for all \(i \in [d_{\xi}]\) is a uniform sample from the \(\Xi\)-extreme points.
    \item Consider an ellipsoidal uncertainty set with \(\Xi = \big\{\xi: \xi = c + H^{-1/2}u, \ \|u\|_2 \leq 1\big\}\) and we seek to sample uniformly from its boundary. We first generate a normalized \(u = g/\|g\|_2\) from a Gaussian random vector \(g \sim \mathcal{N}(0, I)\). Then, we accept \(u\) with probability \(\sqrt{u^{\top}Hu/\lambda_{\max}(H)}\), where \(\lambda_{\max}(H)\) denotes the largest eigenvalue of the Hessian matrix \(H\). We will either resample \(u\) in case it is rejected, or accept it and the ensuing \(\xi = c + H^{-1/2}u\) is a uniform sample from the \(\Xi\)-boundary. \hfill \(\Box\)
\end{enumerate}
\end{example}

Having addressed the non-independent stochastic approximation error, we now establish the stability of the incumbent trajectory. 
To this end, we examine the cumulative descent error $
\mathbb{E}\!\left[-2\sum_{k=1}^{n}\alpha_k e_k\right].
$
By the definition of subgradients and $e_k$, we have 
\[
\begin{aligned}
Z_{x_{k-1}}(I_k)^{\top}(x_{k-1}-x^\star)
&\ge f(x_{k-1})-f(x^\star)
+Q\left(x_{k-1},\xi^{(I_k)}\right)-Q\left(x^\star,\xi^{(I_k)}\right)\\
&=f(x_{k-1})-f(x^\star)
+\sum_{i\in\mathcal I}p_{x_{k-1}}(i)
\bigl(Q(x_{k-1},\xi^{(i)})-Q(x^\star,\xi^{(i)})\bigr)
+e_k\\
&\ge F_w(x_{k-1})-F_w(x^\star)+e_k,
\end{aligned}
\]
where \(x^\star \in \operatorname{argmin}_{x \in \X}F_w(x)\) and the last line follows from Jensen's inequality because the logarithm function is concave and \(p_{x_{k-1}}(\cdot)\) is a probability distribution on \(\I\):
\[
\begin{aligned}
F_w(x^\star)-F_w(x_{k-1})
&=f(x^\star)-f(x_{k-1})
+w\log\!\left(
\sum_{i\in\mathcal I}p_{x_{k-1}}(i)
\exp\!\left\{
\frac{Q(x^\star,\xi^{(i)})-Q(x_{k-1},\xi^{(i)})}{w}
\right\}\right)\\
&\ge f(x^\star)-f(x_{k-1})
+\sum_{i\in\mathcal I}p_{x_{k-1}}(i)
\bigl(Q(x^\star,\xi^{(i)})-Q(x_{k-1},\xi^{(i)})\bigr).
\end{aligned}
\] Thus, $-\alpha_k e_k$ enters the optimality gap \(F_w(x_{k-1}) - F_w(x^\star)\) through the Markov sample $Z_{x_{k-1}}(I_k)$, and a smaller
cumulative error yields a tighter convergence bound.
The next proposition bounds the cumulative descent error
through the step sizes \(\alpha_k\).

\begin{proposition}[Cumulative descent error bound]\label{prop:poisson-noV}
Suppose that the step sizes are nonincreasing with $\alpha_k>0$. 
Define constants \(B_Z := 2C_*DL_Q'\), \(L_Z:= C_*\left(2L_Q'+2D(L_Q')^2/w \right)
+2C_*^2DL_Q'\left(L_P+2L_Q'/w\right) \), where \(L_P\) denotes the Lipschitz modulus of \(P_x\) such that \(\sum_{j \in \I}|P_x(i,j)-P_{x'}(i,j)| \;\le\;  L_P\|x-x'\|\) for all \(i \in \I\) (see Lemmas~\ref{lem:lip barker}--\ref{lem:lipschitz-MH-nom}). Then, for any $n\ge 1$, it holds that
\begin{equation}\label{eq:poisson-bound}
\mathbb E\!\left[-2\sum_{k=1}^{n}\alpha_k e_k\right]
\le C_1\sum_{k=1}^{n}\alpha_k^2+C_0,
\end{equation}
where nonnegative constants
$C_0,C_1$ depend only on $(B_Z,L_Z,L_P,L_f,L'_Q)$
through
\[
C_0=4B_Z\alpha_1, \quad 
C_1=2(L'_Q+L_f)\left(L_PB_Z+L_Z\right).
\]
\end{proposition}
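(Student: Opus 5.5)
Here $I_{k}$ is drawn from $P_{x_{k-1}}(I_{k-1},\cdot)$. I would use the standard Poisson-equation decomposition for Markov noise with decision-dependent kernels.

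\textbf{Step 1: Poisson solution.} For each $x\in\X$ set
\[
h_x(i):=Q(x,\xi^{(i)})-Q(x^\star,\xi^{(i)}),\qquad \bar h_x:=h_x-\textstyle\sum_j p_x(j)h_x(j)\mathbf 1 ,
\]
so that $e_{k}=\bar h_{x_{k-1}}(I_{k})$. Define $\hat u_x:=(\operatorname{Id}-P_x+\mathbf 1p_x^\top)^{-1}\bar h_x$. Then $p_x^\top\hat u_x=0$, and $\hat u_x$ solves $\hat u_x-P_x\hat u_x=\bar h_x$.

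By Lemma~\ref{lem:recourse_lipschitz_smoothing}(1), $|h_x(i)|\le L_Q'\|x-x^\star\|\le L_Q'D$. Hence $\|\bar h_x\|_\infty\le 2DL_Q'$, and Assumption~\ref{ass:invertibility} gives $\|\hat u_x\|_\infty\le B_Z$.

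Next I would show that $x\mapsto \hat u_x$ is $L_Z$-Lipschitz in sup norm. Use
\[
\hat u_x-\hat u_{x'}=M_x(\bar h_x-\bar h_{x'})+(M_x-M_{x'})\bar h_{x'},\qquad M_x:=(\operatorname{Id}-P_x+\mathbf 1p_x^\top)^{-1},
\]
together with $M_x-M_{x'}=M_x\big[(P_x-P_{x'})-\mathbf 1(p_x-p_{x'})^\top\big]M_{x'}$. This needs three ingredients:
\begin{itemize}
\item $\|h_x-h_{x'}\|_\infty\le L_Q'\|x-x'\|$;
\item $\|p_x-p_{x'}\|_1\le (2L_Q'/w)\|x-x'\|$, from the softmax with $Q/w$ being $L_Q'/w$-Lipschitz;
\item the centering term, which gives a $2DL_Q'^2/w$ contribution.
\end{itemize}
Collecting terms yields exactly $L_Z=C_*(2L_Q'+2DL_Q'^2/w)+C_*^2\cdot 2DL_Q'(L_P+2L_Q'/w)$. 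This matches the stated constant, which confirms the route.

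\textbf{Step 2: Decomposition.} Write $e_k=\hat u_{x_{k-1}}(I_k)-P_{x_{k-1}}\hat u_{x_{k-1}}(I_k)$ and split it as
\begin{align*}
e_k &= \underbrace{\hat u_{x_{k-1}}(I_k)-P_{x_{k-1}}\hat u_{x_{k-1}}(I_{k-1})}_{\text{martingale diff.}}
+\underbrace{P_{x_{k-1}}\hat u_{x_{k-1}}(I_{k-1})-P_{x_{k}}\hat u_{x_{k}}(I_{k})}_{\text{telescoping}}\\
&\quad+\underbrace{P_{x_{k}}\hat u_{x_{k}}(I_{k})-P_{x_{k-1}}\hat u_{x_{k-1}}(I_{k})}_{\text{drift}} .
\end{align*}

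The first term has zero conditional mean given $\mathcal F_{k-1}$. Since $\alpha_k$ is deterministic, it vanishes in expectation.

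The drift term is bounded by
\[
\|P_{x_k}-P_{x_{k-1}}\|\,\|\hat u\|_\infty+\|\hat u_{x_k}-\hat u_{x_{k-1}}\|_\infty\le (L_PB_Z+L_Z)\|x_k-x_{k-1}\|.
\]
By nonexpansiveness of the projection and Lemma~\ref{prop:L-smooth}, $\|x_k-x_{k-1}\|\le\alpha_k(L_f+L_Q')$. Multiplied by $2\alpha_k$, this gives the $C_1\alpha_k^2$ term.

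\textbf{Step 3: Summation by parts.} The main obstacle is the weighted telescoping sum $\sum_k\alpha_k(T_{k-1}-T_k)$, where $T_k:=P_{x_k}\hat u_{x_k}(I_k)$ satisfies $|T_k|\le B_Z$. Abel summation gives
\[
\alpha_1T_0-\alpha_nT_n+\sum_{k=1}^{n-1}(\alpha_{k+1}-\alpha_k)T_k .
\]
Since the step sizes are nonincreasing, the last sum is at most $B_Z(\alpha_1-\alpha_n)$ in absolute value. The whole expression is therefore at most $2B_Z\alpha_1$, and the factor $2$ in the statement yields $C_0=4B_Z\alpha_1$.

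A care point is the indexing of $x_k$ versus $x_{k-1}$ inside the drift term, so that exactly one step $\|x_k-x_{k-1}\|$ appears per term. The measurability of $\hat u_{x_{k-1}}$ with respect to $\mathcal F_{k-1}$ also needs checking, since this is what makes the martingale term vanish.
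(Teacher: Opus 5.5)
Your proposal is correct and follows essentially the same route as the paper. The paper likewise bounds the Poisson solution and its Lipschitz modulus in a separate lemma (Lemma~\ref{prop:2}, with exactly your $B_Z$ and $L_Z$), removes the martingale part via the tower property, and then splits the remainder into the same Abel-summed telescoping term, which gives $4B_Z\alpha_1$, and drift term, which gives $C_1\sum_k\alpha_k^2$ using $\|x_k-x_{k-1}\|\le\alpha_k(L_f+L_Q')$.
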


Proposition~\ref{prop:poisson-noV} implies that the cumulative descent error can be controlled by choosing the step sizes properly. Intuitively, the step size \(\alpha_k\) bounds the change in the iterates \(\|x_k - x_{k-1}\|\) and allows the Markov chain to carry the memory about adversarial scenarios forward. We are now ready to present the main convergence guarantee for Algorithm~\ref{alg:MH-SGD2}.
\begin{theorem}
\label{thm:continuous_convergence}
Under Assumptions~\ref{assump:5}--\ref{ass:invertibility}, 
suppose that the step sizes $\{\alpha_k\}$ are positive and nonincreasing. 
Then, for any $n \ge 1$, we have
\[
 0 \le \mathbb{E}\big[F_{w}(\bar{x}_n)\big] - F_{w}^\star
 \le
\frac{
\mathbb E[\|x_0-x^\star\|^2]+C_0
+\bigl(C_1+3(L_f+L_Q')^2\bigr)\sum_{k=1}^n\alpha_k^2
}{
2\sum_{k=1}^n\alpha_k
},
\]
where \(F^\star_w := \min_{x \in \mathcal{X}} F_{w} (x)\). Moreover, if we choose the step sizes as $\alpha_n \propto 1/\sqrt{n+1}$, 
then Algorithm~\ref{alg:MH-SGD2} achieves a convergence rate to $F_{w}^\star$ in the order of \(
O\!\left(
\left(C_*+ C_*^2/w \right)
n^{-1/2} \log n
\right)
\) 
for the step-size average and 
\(
O\!\left(
\left(C_*+ C_*^2/w \right)
n^{-1/2} 
\right) 
\) 
for the linear average, respectively. 
\end{theorem}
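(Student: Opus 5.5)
The plan is to run the standard projected-subgradient energy argument on $\|x_k-x^\star\|^2$, with the one-step Markov bias absorbed by Proposition~\ref{prop:poisson-noV}. Fix $x^\star\in\argmin_{x\in\X}F_w(x)$. Since $\Pi_{\X}$ is nonexpansive and $x^\star\in\X$,
\[
\|x_k-x^\star\|^2\le \|x_{k-1}-x^\star\|^2-2\alpha_k Z_{x_{k-1}}(I_k)^\top(x_{k-1}-x^\star)+\alpha_k^2\|Z_{x_{k-1}}(I_k)\|^2 .
\]
I will bound the last term by $\alpha_k^2(L_f+L_Q')^2$ using Lemma~\ref{prop:L-smooth}. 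For the inner product, I will substitute the inequality displayed just before Proposition~\ref{prop:poisson-noV}, namely $Z_{x_{k-1}}(I_k)^\top(x_{k-1}-x^\star)\ge F_w(x_{k-1})-F_w(x^\star)+e_k$. That inequality follows from the subgradient inequality for $f$ and for $Q(\cdot,\xi^{(I_k)})$, together with Jensen's inequality for the log-sum-exp. This gives
\[
2\alpha_k\bigl(F_w(x_{k-1})-F_w^\star\bigr)\le \|x_{k-1}-x^\star\|^2-\|x_k-x^\star\|^2-2\alpha_k e_k+\alpha_k^2(L_f+L_Q')^2 .
\]

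Next, I sum over $k=1,\dots,n$, telescope, drop $-\|x_n-x^\star\|^2$, take expectations, and apply Proposition~\ref{prop:poisson-noV} to $\mathbb E[-2\sum_k\alpha_k e_k]$. The result is
\[
2\sum_{k=1}^n\alpha_k\,\mathbb E\bigl[F_w(x_{k-1})-F_w^\star\bigr]\le \mathbb E\|x_0-x^\star\|^2+C_0+\bigl(C_1+(L_f+L_Q')^2\bigr)\sum_{k=1}^n\alpha_k^2 .
\]
The averaged iterate in Algorithm~\ref{alg:MH-SGD2} weights $x_k$ (the post-update iterate) rather than $x_{k-1}$. I will handle this index shift with the Lipschitz bound $|F_w(x_k)-F_w(x_{k-1})|\le (L_f+L_Q')\|x_k-x_{k-1}\|\le \alpha_k(L_f+L_Q')^2$, which again uses Lemma~\ref{prop:L-smooth} and nonexpansiveness. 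Because $\alpha_k$ is nonincreasing, the shift costs at most $2\sum_k\alpha_k^2(L_f+L_Q')^2$, which produces the constant $3(L_f+L_Q')^2$ in the statement. Convexity of $F_w$ and Jensen's inequality for the step-size-weighted average then give the displayed bound. The lower bound $0$ holds trivially because $\bar x_n\in\X$ by convexity of $\X$.

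For the rates, I take $\alpha_k\propto k^{-1/2}$. Then $\sum_k\alpha_k\asymp\sqrt n$ and $\sum_k\alpha_k^2\asymp\log n$, which gives the $n^{-1/2}\log n$ rate for the step-size average. To expose the dependence on $C_*$ and $w$, I read off $C_0=O(C_*)$ and $C_1=O(L_PB_Z+L_Z)=O(C_*+C_*^2/w)$. Here I treat $L_P$ as a constant proportional to $1/w$ and $D$ and the Lipschitz moduli as $O(1)$. Strictly, $C_*^2L_P$ is $C_*^2/w$, which I will absorb into the stated order.

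The linear average is the main obstacle, because the telescoping now carries weights $k$. I will multiply the per-step inequality by $k/\alpha_k\propto k^{3/2}$ before summing. The squared-distance terms then produce $\sum_k(k/\alpha_k-(k-1)/\alpha_{k-1})\|x_{k-1}-x^\star\|^2\le D^2\,n/\alpha_n$, which is $O(n^{3/2})$. The summed $\alpha_k^2$ terms produce $\sum_k k\alpha_k=O(n^{3/2})$. Dividing by $\sum_k k\asymp n^2$ yields $O(n^{-1/2})$ with no log factor.

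What this requires is a weighted version of Proposition~\ref{prop:poisson-noV}, i.e., a bound on $\mathbb E[-2\sum_k k\,e_k]$ by $O(\sum_k k\alpha_k)$ plus a boundary term. I will obtain it by redoing the Poisson-equation decomposition with the weights $k$ in place of $\alpha_k$. Solve $\hat e_x-P_x\hat e_x=e$ using the fundamental matrix, which is bounded by $C_*$ under Assumption~\ref{ass:invertibility}. This splits $e_k$ into a martingale difference (zero mean), a telescoping term whose weighted Abel summation costs $O(C_*n)$, and a drift term of size $O\bigl((L_P C_*+C_*^2/w)\,\|x_k-x_{k-1}\|\bigr)=O\bigl((C_*+C_*^2/w)\alpha_k\bigr)$, summing to $O\bigl((C_*+C_*^2/w)\,n^{3/2}\bigr)$. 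This weighted Poisson step is where I expect most of the work.
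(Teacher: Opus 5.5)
Your proposal is correct and follows essentially the same route as the paper. Both arguments use the projected energy inequality combined with the bias inequality stated before Proposition~\ref{prop:poisson-noV}, and both shift from $x_{k-1}$ to $x_k$ via the Lipschitz bound, which yields the constant $3(L_f+L_Q')^2$. For the linear average, both multiply by $k/\alpha_k$, use summation by parts to get $O(D^2 n/\alpha_n)$, and bound a $k$-weighted Poisson decomposition by $2nB_Z+(L_f+L_Q')(L_PB_Z+L_Z)\sum_k (k+1)\alpha_k$. One minor point: you do not need monotonicity of $\alpha_k$ for the index shift, because the bound $F_w(x_k)-F_w(x_{k-1})\le \alpha_k(L_f+L_Q')^2$ is applied term by term.
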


Theorem~\ref{thm:continuous_convergence} establishes polynomial convergence of Algorithm~\ref{alg:MH-SGD2} to the global minimum of \(F_{w}(x)\). But Lemma~\ref{lem:mellowmax-approx} implies 
that \(F_{w}(\cdot)\) can approximate the true objective function of~\eqref{main} to any prescribed accuracy by choosing \(w\) sufficiently small. Therefore, Algorithm~\ref{alg:MH-SGD2} provides a near-optimal solution to~\eqref{main} in polynomial time. 
Furthermore, we can iteratively adjust \(w\) in Algorithm~\ref{alg:MH-SGD2}, producing \(w_n\) in iteration \(n\), to gradually obtain a tighter approximation \(F_{w_n}(\cdot)\) for \(F(\cdot)\) as \(\bar{x}_n\) progresses. This adaptive variant of Algorithm~\ref{alg:MH-SGD2} converges to \(F^\star\) in polynomial time.
{\color{black}
\begin{proposition}
\label{prop:adaptive-w-1over3}
Under Assumptions~\ref{assump:5}--\ref{ass:invertibility}, define
$\displaystyle
I^\star(x)
:=
\argmax_{i\in\mathcal I}Q(x,\xi^{(i)})
$, $\displaystyle \widetilde{\Delta}_n := \max_{I \in \I}Q(x_n,\xi^{(I)})-\max_{j \notin I^\star(x_n)}Q(x_n,\xi^{(j)})$, and suppose that there exists a \(\Delta_\star > 0\) such that \(\Delta_\star \leq \widetilde\Delta_n\) for all sufficiently large \(n\). At iteration $n\ge0$, use step size $\alpha_{n}\propto n^{-1/2}$ and temperature parameter $w_n = (2\Delta_\star)/(\log((n+2)|\I|^2))$ for the Mellowmax approximation in Algorithm~\ref{alg:MH-SGD2}. Then, the linear average \(\bar{x}_n\) satisfies 
\[
0\le\mathbb E[F(\bar x_n)]-F^\star \le O\left(
\frac{1+C_*^2\log(n|\mathcal I|)}{\sqrt n}
\right). 
\]
\end{proposition}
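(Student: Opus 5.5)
We bound the gap by splitting it into an optimization error and an approximation error. Because $\mathcal{M}_{w}\le \max_\xi Q$, we have $F_{w}\le F$ for every $w>0$. Take $x^\star\in\argmin F$. Then
\[
F(\bar x_n)-F^\star \le \bigl[F(\bar x_n)-F_{w}(\bar x_n)\bigr]+\bigl[F_{w}(\bar x_n)-F_{w}(x^\star)\bigr]+\bigl[F_{w}(x^\star)-F(x^\star)\bigr],
\]
and the last bracket is nonpositive. Lemma~\ref{lem:mellowmax-approx} bounds the first bracket crudely by $w\log|\I|$. That crude bound only gives $O(1/\log n)$, which is too weak, so we sharpen it using the gap assumption. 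Write $M(x)=\max_i Q(x,\xi^{(i)})$. At a point with a gap of at least $\Delta_\star$ between the top level and the next,
\[
M-\mathcal M_w = w\log|\I| - w\log\Bigl(|I^\star|+\sum_{j\notin I^\star}e^{-(M-Q_j)/w}\Bigr)\le w\log(|\I|/|I^\star|).
\]
We would instead pair this with the choice $w_n=2\Delta_\star/\log((n+2)|\I|^2)$, so that $e^{-\Delta_\star/w_n}=((n+2)|\I|^2)^{-1/2}$. The plan is to show that the Mellowmax weights put mass at most $|\I|e^{-\Delta_\star/w_n}\le (n+2)^{-1/2}$ off $I^\star(x_n)$. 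As a result, the stochastic subgradients $Z$ drawn from $p_{x_n}$ (and from the chain) are, up to an $O(n^{-1/2})$ perturbation, subgradients of $M$ itself. To use this, we would run the potential-function argument of Theorem~\ref{thm:continuous_convergence} directly on $F$ rather than on $F_{w}$. The key inequality becomes $\sum_i p_{x_{k-1}}(i)(Q(x_{k-1},\xi^{(i)})-Q(x^\star,\xi^{(i)}))\ge M(x_{k-1})-M(x^\star)-\delta_k$, where $\delta_k\le 2\sup|Q|\cdot p_{x_{k-1}}(\I\setminus I^\star)=O(k^{-1/2})$. This holds because $Q(x_{k-1},\xi^{(i)})=M(x_{k-1})$ on $I^\star$ and $Q(x^\star,\xi^{(i)})\le M(x^\star)$ everywhere. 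Summing with linear weights then gives the desired $\sum_k k\,\delta_k\alpha_k/\sum_k k\alpha_k=O(n^{-1/2})$ contribution.

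Next we control the Markov error with a time-varying $w$. We redo Proposition~\ref{prop:poisson-noV} with $P_{x_k}$ replaced by $P^{(k)}:=P_{x_k,w_k}$. The Poisson-equation decomposition produces terms that depend on three things: $C_*$; the Lipschitz constants in $x$, which scale like $L_P=O(1/w_k)$ and $L_Z=O(C_*+C_*^2/w_k)$; and a new drift term from $w_k\to w_{k+1}$. For the last of these we bound $\|P_{x,w}-P_{x,w'}\|$ and $\|p_{x,w}-p_{x,w'}\|$ by $O(\Delta|w^{-1}-w'^{-1}|)$. Since $1/w_k=\log((k+2)|\I|^2)/(2\Delta_\star)$ changes by $O(1/(k\Delta_\star))$ per step, the drift summed against the weights is of lower order. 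We would also assume $C_*$ is uniform over the $w_k$ in use (for instance via the $p_{\max}$ bound in Example~\ref{ex:uniform-proposal}). Collecting terms, the error contribution is $C_1(w_n)\sum\alpha_k^2/\sum\alpha_k$ with $C_1=O(1+C_*^2/w_n)=O(1+C_*^2\log(n|\I|))$, computed with linear averaging as in the theorem. This matches the claimed rate.

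The main obstacle is the \emph{sufficiently large $n$} qualifier on the gap, combined with the fact that $I^\star(x_k)$ can change along the trajectory. The finite prefix before the gap condition kicks in contributes a constant to the numerator, which is harmless after dividing by $\sum k\alpha_k\sim n^{3/2}$. Switches of $I^\star$ are also harmless, because the inequality above is pointwise in $k$.

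The delicate part is the Poisson step with non-stationary $w_k$. There we must verify that the fundamental-matrix bound, and the Lipschitz dependence of the Poisson solution on $(x,w)$, together only cost $C_*^2/w_n$ and not $e^{\Delta/w_n}$. We would handle this by invoking Assumption~\ref{ass:invertibility} uniformly in $w$ and tracking $1/w$ factors exactly as they enter $L_Z$.
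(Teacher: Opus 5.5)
Your proposal is correct and follows essentially the same route as the paper's proof. The paper also runs the linear-averaged potential argument directly on $F$; your opening decomposition through $F_w$ is a dead end, and you rightly abandon it. It likewise absorbs $\max_i Q(x_{k-1},\xi^{(i)})-\sum_j p_{x_{k-1},w_{k-1}}(j)Q(x_{k-1},\xi^{(j)})\le \Delta\,|\mathcal I|e^{-\Delta_\star/w_{k-1}}=O(k^{-1/2})$ as an additive error. It then redoes the Poisson/resolvent bound with an extra drift term $|1/w_k-1/w_{k-1}|=O(1/k)$ through a Lipschitz-in-$(x,w)$ lemma. Your remark that Assumption~\ref{ass:invertibility} must hold uniformly in $w$ is well taken; the paper uses this implicitly.
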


}

\section{Soft Separation}
\label{sec:3}

The last section uses soft separation in one concrete role: generating first-order estimates for the Mellowmax surrogate. The soft-separation mechanism itself, however, is independent of the particular iterative algorithm. In this section, we abstract it from
Algorithm~\ref{alg:MH-SGD2} to establish method-independent guarantees
for tracking a time-varying adversarial distribution and to identify adversarial scenarios for all later-stage incumbents. We then apply these results to generalize the convergence guarantee of Algorithm~\ref{alg:MH-SGD2} beyond finite scenarios to general uncertainty sets. 

\subsection{The Soft-Separation Oracle}
\label{sec:3.1}


Recall that soft separation replaces the exact separation with a stateful process, which updates the state through a transition rule toward scenarios that are adversarial for the decision incumbent. 
To formalize this process, we first let $\mathcal I$ be a measurable state space equipped with its
Borel $\sigma$-algebra $\mathcal B(\mathcal I)$ and \(I\mapsto \xi_I\in\XiSet\) be a measurable mapping from states to
uncertainty realizations. We assume that the representation is sufficiently
rich to preserve the worst-case recourse value, namely,
\(
\sup_{I\in\mathcal I} \, Q(x,\xi_I)
=
\sup_{\xi\in\XiSet} \, Q(x,\xi)
\)
for all \(x \in \X\). 
Note here that \(\I\) need not be finite or discrete as in Section~\ref{sec:2}. 
For example, \(\I\) may be the boundary of a sphere.
\begin{example}[Linear recourse with an ellipsoidal uncertainty set]
\label{example:6}
Consider a linear recourse model with \(\mathcal{K} = \mathbb{R}^m_+\), 
\(
Q(x,\xi)
=
\min_y
\left\{
b^\top y:
Gy\ge h-Ex-U\xi
\right\}
\), 
and an ellipsoidal uncertainty set
\(
\Xi
=
\left\{
\bar\xi+A u:
\|u\|_2\le 1
\right\}
\) with \(A \in \mathbb{R}^{d_\xi \times d_u}\). 
Because $Q(x,\cdot)$ is convex, its maximum over $\Xi$ is attained on the
boundary of the ellipsoid. We may therefore parameterize the adversarial
state by
\[
\hskip0.35\textwidth \mathcal I := \mathbb S^{d_u-1},
\qquad
\xi_I := \bar\xi+A I. \hskip0.35\textwidth \Box
\]
\end{example}

\begin{example}
[Quadratic recourse with an Ellipsoidal uncertainty set]
\label{example:7}
Consider a quadratic recourse model with \(\mathcal{K}=\mathbb{R}^m_+\), 
\(
Q(x,\xi)
=
\min_y
\left\{
\frac{1}{2}y^\top H y
+
(b+B\xi)^\top y
:
Gy\ge h-Ex-U\xi
\right\}
\), 
and an ellipsoidal uncertainty set \(\Xi = \{\bar{\xi}+Au: \|u\|_2 \leq 1\}\), where $H \succeq 0$ and \(A \in \mathbb{R}^{d_\xi \times d_u}\). 
Because $Q(x,\xi)$ is not necessarily convex in $\xi$, the worst-case scenario may appear anywhere in \(\Xi\). Consequently, we may parameterize the adversarial state by
\[
\hskip0.365\textwidth \mathcal I := \mathbb B^{d_u},
\qquad
\xi_I := \bar\xi+A I. \hskip0.35\textwidth \Box
\]
\end{example}

Second, to rank the states at a given incumbent \(x\), we assign each
\(I\in\mathcal I\) a measurable score
\(
\widehat{Q}_x:\mathcal I\to\mathbb R
\). 
The score may be exact with \(\widehat{Q}_x(I) \equiv Q(x, \xi_I)\) or approximate with
\(
\sup_{I\in\mathcal I}
|
Q(x,\xi_I)-\widehat{Q}_x(I)
|
\le \varepsilon
\) for all \(x \in \X\) 
and \(\varepsilon\ge 0\) represents a uniform error bound. For example, the Moreau envelope \(\widehat Q_x(I)
:= 
\min_{v\in\R^{d_x}}
\left\{Q\left(v,\xi_I\right)+({1}/{2\tau})\|v-x\|^2\right\}\) admits a uniform error bound \(\varepsilon = (\tau/2) (L_Q')^2\). 

Soft separation uses \(\widehat{Q}_x\) to bias the adversarial state toward realizations with larger \(Q(x, \xi)\) values. Specifically, let \(\nu\) be a finite reference measure on \(\mathcal I\) with full support (e.g., \(\nu\) is the normalized Lebesgue measure on \(\I\)). For a temperature parameter \(w>0\), we approximate the worst-case recourse function \(\max_{\xi \in \Xi}Q(x, \xi)\) by the Mellowmax function 
\[
\mathcal M_{w}^{\nu}(x)
:=
w\log
\int_{\mathcal I}
\exp\left\{
\frac{\widehat Q_x(I)}{w}
\right\}
\nu(dI).
\]
\(\mathcal M_{w}^{\nu}(\cdot)\) generalizes \(\mathcal{M}_w(\cdot)\), which was defined in Section~\ref{sec:2} for a finite \(\I\), to a general \(\I\). We show in~\ref{apx-prop:continuous-mellowmax} that \(\mathcal M_{w}^{\nu}(\cdot)\) admits an analogous uniform error bound for approximating \(\max_{\xi \in \Xi}Q(x, \xi)\) as in Lemma~\ref{lem:mellowmax-approx}. Accordingly, the robust objective function of~\eqref{main} is approximated by 
\(
F_{w}^{\nu}(x)
:=
f(x)+\mathcal M_{w}^{\nu}(x)
\). Under Assumption~\ref{ass:lipschitz-scores}, if \(\widehat{Q}_x\) and $f$ are differentiable then \(\|\nabla_x \widehat{Q}_x(I)\|\) is bounded uniformly and so 
\(
    \nabla F_{w}^{\nu}(x)
    =
    \nabla f(x)
    +
    \int_{\mathcal I}
        \nabla_x \widehat{Q}_x(I)\,p_x^w(dI)
\), where the soft adversarial distribution \(p^w_x\) is defined through
\begin{equation*}
\frac{d p_x^w}{d\nu}(I)
=
\frac{
\exp\!\bigl(\widehat{Q}_x(I)/w\bigr)
}{
\displaystyle
\int_{\mathcal I}
\exp\!\bigl(\widehat{Q}_x(J)/w\bigr)\,d\nu(J)
} \qquad \forall I \in \I.
\end{equation*}
The distribution \(p_x^w\) assigns greater mass to states with larger scores, and a smaller value of \(w\) produces stronger concentration around the highest-scoring states. We remark that \(p_x^w\) is well-defined whenever $\nu$ is a finite measure and \(\widehat{Q}_x(I)\) is bounded. 

Unfortunately, \(p_x^w\) is challenging to sample from because of the integral \(\int_{\mathcal I}
\exp\!\bigl(\widehat{Q}_x(J)/w\bigr)\,d\nu(J)\). In addition, we need to adapt the sampling from \(p_x^w\) to the incumbent \(x\), which may be updated frequently in decision-making. To address these, we pair \(p_x^w\) with 
\begin{itemize}
    \item a Markov transition kernel \(P_x\) on \(\I\) such that \(p_x^w\) is an invariant distribution of \(P_x\) for any fixed \(x \in \X\). For example, \(P_x\) may be constructed through a proposal-acceptance scheme as in Examples~\ref{ex:MH-proposal}--\ref{ex:uniform-proposal} or a Langevin diffusion-type dynamics~\citep{roberts1996exponential}. The proposal may also exploit the geometry of $\Xi$ and information from the recourse function \(Q(x, \xi)\) and the score \(\widehat{Q}_x(I)\). When the adversarial search can be restricted to a smooth boundary, as in linear recourse with ellipsoidal uncertainty (Example~\ref{example:6}), one may use geodesic or tangent-space proposals~\citep{ByrneGirolami2013,ZappaHolmesCerfonGoodman2018}. For a full-dimensional convex uncertainty set (Example~\ref{example:7}), convex-body random walks such as hit-and-run provide feasible proposals~\citep{LovaszVempala2006}. Primal-dual information from \(Q(x, \xi)\) may further be used to construct gradient-informed proposals. For example, when a smooth approximation to $\widehat Q_x$ is available, one may employ Hamiltonian-type proposals~\citep{Neal2011}.
    \item an iterative optimization algorithm that generates a sequence \(\{x_n: n \in \mathbb{N}_+\}\) of incumbents. For example, this algorithm may be the projected subgradient descent as in Algorithm~\ref{alg:MH-SGD2} or a branch-and-cut algorithm (see Section~\ref{sec:4}). 
\end{itemize}
We then formalize the soft-separation oracle as follows.


\begin{definition}[Markovian Soft-separation Oracle]
\label{def:1}
Fix a temperature parameter \(w>0\), a score accuracy
\(\varepsilon\ge 0\), and a state representation
\(\xi_I: \I \rightarrow \Xi\). Given an incumbent sequence
\(\{x_n: n \in \mathbb{N}_+\}\), a \emph{Markovian soft-separation oracle} is a
time-inhomogeneous Markov process \(\{I_n: n \in \mathbb{N}_+\}\) on
\(\mathcal I\) satisfying \(I_{n+1}\sim P_{x_n}(I_n,\cdot),\) where, for every \(x\in\mathcal X\), the transition kernel \(P_x\)
admits the soft adversarial distribution \(p_x^w\) as an invariant distribution.
\end{definition}


A key insight of the oracle is that it decouples
adversarial relevance from exact identification in every iteration. 
Although the incumbent sequence and
the adversarial process co-evolve, their interaction can be controlled:
when the incumbent changes gradually, the oracle tracks the corresponding
adversarial target and can approach, or recover with high probability,
exact separation at later-stage iterates. We show these properties
next.

\subsection{Certification for Identifying Adversarial Scenarios}
\label{sec:2.2}
We denote by $(\X,\|\cdot\|)$ a metric space for incumbent solutions. 
Our analysis applies to both discrete (finite) and continuous (uncountable) state spaces. 
We start by imposing regularity conditions on the oracle. 
\begin{assumption}[Lipschitz score]
\label{ass:lipschitz-scores}
There exists an \(L'_{Q}<\infty\) such that, for every \(i\in\mathcal I\) and all
\(x,x'\in\mathcal X\),
\[
|\widehat{Q}_x(i)-\widehat{Q}_{x'}(i)|
\le
L'_{Q}\|x-x'\|.
\]
\end{assumption}

\begin{assumption}[Lipschitz transition kernel]
\label{ass:general-kernel-lipschitz}
There exists an \(L_P\) such that, for all \(x, y \in \X\),
\[
\sup_{I\in\mathcal I}
\|P_x(I,\cdot)-P_y(I,\cdot)\|_{\mathrm{TV}}
\ \le \ L_P\|x-y\|,
\]
where \(\|P-Q\|_{\mathrm{TV}}:=\sup_{B\in\mathcal B(\mathcal I)}|P(B)-Q(B)|\) denotes the total variation between measures \(P, Q\).
\end{assumption} 

\begin{assumption}[Uniform mixing]
\label{ass:uniform-mixing}
There exists an \(\bar\eta\in[0,1)\) such that, for every \(x\in\mathcal X\), the Dobrushin coefficient of \(P_x\) satisfies
\[
\eta_{\mathrm{Dob}}(P_x) \ := \ \sup_{I,J\in\mathcal I} \left\| P_x(I,\cdot)-P_x(J,\cdot) \right\|_{\mathrm{TV}} \ \le \ \bar\eta.
\]
\end{assumption}

Assumption~\ref{ass:lipschitz-scores} holds for the recourse function \(Q(x,\xi)\), i.e., for \(\widehat{Q}_x(\cdot) \equiv Q(x, \cdot)\) (see Lemma~\ref{lem:recourse_lipschitz_smoothing}, whose proof applies to a general \(\I\)) and consequently for its Moreau envelope as well. 
In addition, Assumption~\ref{ass:general-kernel-lipschitz} follows naturally from Assumption~\ref{ass:lipschitz-scores}. For example, for both MH and Barker acceptance, we show the Lipschitz continuity of \(P_x\) for a discrete and finite \(\I\) in Lemmas~\ref{lem:lip barker}--\ref{lem:lipschitz-MH-nom} and for a general \(\I\) in Lemma~\ref{lem:lip-kernal-general} through Lipschitz scores. 
Finally, Assumption~\ref{ass:uniform-mixing} enables uniform mixing of the Markov process characterized by \(P_x\)~\citep[see][]{Dobrushin1956} and it holds when \(P_x\) is induced by appropriate proposal and acceptance distributions. For example, \(\eta_{\mathrm{Dob}}(P_x)\) can be uniformly bounded when the proposal distribution is uniform on \(\I\) and the acceptance distribution is either MH or Barker.



\begin{example}[Uniform Mixing with Uniform Proposal] \label{ex:uniform-mixing}
We continue Example~\ref{example:6} with \(
\mathcal I=\mathbb S^{d_\xi-1}\),
\(\xi_I=\bar\xi+AI\), and \(\Pi:=\{\lambda\ge0:G^\top\lambda=b\}\). Consider a proposal distribution \(q(dJ \mid I)=\nu(dJ)\), where \(\nu\) is the uniform probability measure on \(\I\), and the score \(\widehat{Q}_x(I) = Q(x, \xi_I)\) for all \(I \in \I\). Then, \(A_x^{\mathrm{MH}}(I,J)\ge e^{-\Delta/w}\) and \(A_x^{\mathrm{B}}(I,J)\ge (1+e^{\Delta/w})^{-1}\) for all \(I, J \in \I\) (recall that $\Delta=\sup_{x \in \X}\sup_{i,j \in \I}|Q(x, \xi^{(i)})-Q(x, \xi^{(j)})|$ and \(\Delta < \infty\) by Assumption~\ref{assump:1}). It follows that
\begin{align*}
P_x(I,dJ) \ \geq \ q(dJ|I) A_x^{\mathrm{MH}}(I,dJ) \ \ge \ e^{-\Delta/w}\nu(dJ) \quad & \quad (\text{MH}) \\
P_x(I,dJ) \ \geq \ q(dJ|I) A_x^{\mathrm{B}}(I,dJ) \ \ge \ (1+e^{\Delta/w})^{-1}\nu(dJ). \quad & \quad (\text{Barker})
\end{align*}
Then, Doeblin minorization~\citep[see, e.g., Lemma 4.3.13 of][]{cappe2005inference} implies that \(\eta_{\mathrm{Dob}}(P_x) \leq 1 - e^{-\Delta/w}\) under the MH acceptance and \(\eta_{\mathrm{Dob}}(P_x) \leq 1 - (1+e^{\Delta/w})^{-1}\) under the Barker acceptance.

Alternatively, we can bound \(\eta_{\mathrm{Dob}}(P_x)\) through the Lipschitz continuity of \(\widehat{Q}_x\). Specifically, recall that \(Q(x, \cdot)\) is \(L'_{\xi}\)-Lipschitz continuous in \(\xi\) (see Lemma~\ref{lem:recourse_lipschitz_smoothing}). Then, \(\widehat{Q}_x(\cdot)\) is \((L'_{\xi}\|A\|_{2 \rightarrow 2})\)-Lipschitz continuous in \(I\). It follows that, for all \(I, J \in \I\), 
\begin{align*}
A_x^{\mathrm{MH}}(I,J) \ = \ \min\left\{1, \ 
\frac{(dp_x^w/d\nu)(J)}{(dp_x^w/d\nu)(I)}\right\}
\ \ge & \ 
\frac{(dp_x^w/d\nu)(J)}
{\|dp_x^w/d\nu\|_\infty} \\[0.5em]
A_x^{\mathrm{B}}(I,J) \ = \ 
\frac{(dp_x^w/d\nu)(J)}{(dp_x^w/d\nu)(I) + (dp_x^w/d\nu)(J)}
\ \ge & \ 
\frac{(dp_x^w/d\nu)(J)}
{2\|dp_x^w/d\nu\|_\infty}.
\end{align*}
Hence, for every measurable set \(B \subseteq \I\),
\begin{align*}
P_x(I,B) \ge
\int_B A_x^{\mathrm{MH}}(I,J)\,\nu(dJ)
\ \ge \ \left\|\frac{dp_x^w}{d\nu}\right\|_\infty^{-1}
\int_B\frac{dp_x^w}{d\nu}(J)\,\nu(dJ)
\ = \ \left\|\frac{dp_x^w}{d\nu}\right\|_\infty^{-1}
p_x^w(B) \quad & \quad (\text{MH}) \\
P_x(I,B) \ge 
\int_B A_x^{\mathrm{B}}(I,J)\,\nu(dJ)
\ \ge \ \frac{1}{2} \left\|\frac{dp_x^w}{d\nu}\right\|_\infty^{-1}
\int_B\frac{dp_x^w}{d\nu}(J)\,\nu(dJ)
\ = \ \frac{1}{2} \left\|\frac{dp_x^w}{d\nu}\right\|_\infty^{-1}
p_x^w(B). \quad & \quad (\text{Barker})
\end{align*}
Finally, it can be shown that \(\left\|dp_x^w/d\nu\right\|_\infty^{-1} \geq C_{d_\xi}
(
w/(L_\xi\|A\|_{2\to2})
)^{d_\xi-1}\) for a constant \(C_{d_\xi} > 0\), whose value depends on \(d_\xi\) only (see a proof and the specific value of \(C_{d_\xi}\) in Appendix~\ref{apx-ex:uniform-mixing}). Therefore, Doeblin minorization~\citep[see, e.g., Lemma 4.3.13 of][]{cappe2005inference} implies that \(\eta_{\mathrm{Dob}}(P_x) \leq 1-
\max\big\{
e^{-\Delta/w},
\;
C_{d_\xi}
(
w/(L_\xi\|A\|_{2\to2})
)^{d_\xi-1}
\big\}\) under the MH acceptance and \(\eta_{\mathrm{Dob}}(P_x) \leq 1-
\max\big\{
e^{-\Delta/w},
\;
C_{d_\xi}/2\cdot
(
w/(L_\xi\|A\|_{2\to2})
)^{d_\xi-1}
\big\}\) under the Barker acceptance. \hfill \(\Box\)
\end{example}

Our target is to show that the soft-separation oracle discovers adversarial scenarios as the incumbent sequence and the ensuing Markov kernels co-evolve. To this end, we next make the notion of adversary concrete.
\begin{definition}
[$\widetilde\Delta$-adversarial scenario set]
\label{def:adv-set}
We define \(\displaystyle Q^\star(x) := \sup_{I \in \I} Q(x, \xi_I)\) and, for \(\widetilde{\Delta} \geq 0\), 
\[
I_{\widetilde\Delta}(x):=
\{I\in\mathcal I:Q(x,\xi_I)
\ge Q^\star(x)-\widetilde\Delta\}.
\]
In addition, with \(\widehat Q^\star(x) := \sup_{I \in \I} \widehat{Q}_x(I)\), we define
\[
\widehat I_{\widetilde\Delta}(x):=
\{I\in\mathcal I:\widehat Q_x(I)
\ge \widehat Q^\star(x)-\widetilde\Delta \}.
\]
\end{definition}
Intuitively, \(I_{\widetilde\Delta}(x)\) consists of scenarios that are close to the worst-case one with respect to the (true) recourse function and \(\widehat I_{\widetilde\Delta}(x)\) refers to its counterpart with respect to the (approximate) score function. The main result of this section shows that, after sufficiently long co-evolution, the soft-separation oracle identifies $\widetilde\Delta$-adversarial scenarios with high probability.

\begin{theorem}[High-probability identification for general \(\I\)]
\label{thm:1}
Under Assumptions~\ref{ass:lipschitz-scores}--\ref{ass:uniform-mixing}, consider the incumbent sequence $\{x_n: n\in \mathbb{N}_+\}$ on a measurable space $(\mathcal{X},\mathcal{B}_{\mathcal{X}})$
with respect to the natural filtration $\{\mathcal{F}_n: n\in \mathbb{N}_+\}$ generated by $(\mathcal{I}_m, X_m)_{m\le n}$:
\[
X_0, I_0 \ \to \ I_1 \ \to \ X_1 \ \to \ I_2 \ \to \ X_2 \ \to \ \cdots
\]
Conditional on $\mathcal{F}_n$, the next state $\mathcal{I}_{n+1}$ is sampled from $\mathcal{I}_n$ through the one-step Markov kernel $P_{x_n}$, which admits an invariant distribution $p^w_{x_n}$ on $\mathcal{I}$.

Fix $p > 0$, \(\widetilde\Delta > 2\varepsilon\), and suppose that the incumbent movement satisfies 
\(
\|X_n-X_{n-1}\|\le c\alpha_n
\)
almost surely, where $\{\alpha_n: n \in \mathbb{N}_+\}$ satisfies $\alpha_n\le C_\alpha n^{-p}$ for some $C_{\alpha} > 0$.  Then, for every sufficiently large $n$ such that
$f(n) := \lceil (p+1)\log n/|\log\bar\eta|\rceil<n/2$ and $2^pcC_\alpha
f(n) n^{-p}  \le {(\widetilde\Delta-2\varepsilon)}/{(4 L'_{Q})}$, it holds that
\[
\mathbb P\Big(I_n\notin
I_{\widetilde\Delta}(X_{n-1})\Big)
\le \bar\eta^{f(n)}
+\frac{2^pcL_PC_\alpha}{1-\bar\eta}
f(n) n^{-p}
+  \sup_{x \in \mathcal X} \left\{\frac{\nu\left( \widehat I_{(\widetilde\Delta-2\varepsilon)/2} 
(x )^c \right)}{\nu\left( \widehat I_{(\widetilde\Delta-2\varepsilon)/4} 
(x ) \right)}\right\} \cdot \exp \left\{- \frac{\widetilde{\Delta} -2\varepsilon}{4 w} \right\}.
\]
In particular, the first two terms on the right-hand side scale in order \(O\big((L_P/(1-\bar{\eta}))n^{-p}\log n\big)\).

\end{theorem}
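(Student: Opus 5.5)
The proof splits the error into three pieces. The first is a mixing error, from running a frozen chain for $f(n)$ steps. The second is a drift error, from the kernels changing along the trajectory. The third is a stationary mass error, from $p^w_{x}$ putting mass outside the adversarial set. Set $m:=f(n)$ and freeze the incumbent at $\bar x:=X_{n-m-1}$. Since $m<n/2$, we have $n-m-1\ge n/2-1$, so every index $k\in[n-m,n]$ satisfies $\alpha_k\le C_\alpha k^{-p}\le 2^pC_\alpha n^{-p}$, up to the harmless rounding absorbed by the $2^p$ factor. The movement hypothesis then gives, almost surely,
\[
\|X_{k}-\bar x\|\le c\sum_{j=n-m}^{k}\alpha_j\le 2^p c C_\alpha\, m\, n^{-p}
\quad\text{for all } n-m-1\le k\le n-1.
\]

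\textbf{Step 1 (coupling with a frozen chain).} Conditionally on $\mathcal F_{n-m-1}$, compare the law of $I_n$, which is $\delta_{I_{n-m-1}}P_{X_{n-m-1}}\cdots P_{X_{n-1}}$, with $\mu:=\delta_{I_{n-m-1}}P_{\bar x}^{m+1}$. The chain is adapted, since $X_k$ depends on $I_k$. Even so, a telescoping/hybrid argument on conditional laws works. At the $k$-th step, replacing $P_{X_k}$ by $P_{\bar x}$ costs at most $L_P\|X_k-\bar x\|$ in total variation, by Assumption~\ref{ass:general-kernel-lipschitz}. This discrepancy is contracted by the $\bar\eta$ factor of each subsequent frozen step, by Assumption~\ref{ass:uniform-mixing}. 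Summing the geometric series gives
\[
\|\mathcal L(I_n\mid\mathcal F_{n-m-1})-\mu\|_{\mathrm{TV}}\le \frac{2^pcL_PC_\alpha}{1-\bar\eta}\,m\,n^{-p}.
\]
This is the second term. Handling the adaptivity of $X_k$ cleanly is the main technical obstacle. I would do it by writing the difference of the two path measures step by step and using the almost sure bound on $\|X_k-\bar x\|$ inside each conditional expectation, so that no independence between $X_k$ and the chain is ever needed.

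\textbf{Step 2 (mixing of the frozen chain).} By invariance of $p^w_{\bar x}$ and the Dobrushin contraction, $\|\mu-p^w_{\bar x}\|_{\mathrm{TV}}\le\bar\eta^{m+1}\le\bar\eta^{f(n)}$. This is the first term. The choice $f(n)=\lceil (p+1)\log n/|\log\bar\eta|\rceil$ makes this term $O(n^{-p-1})$, which justifies the claimed $O\big((L_P/(1-\bar\eta))n^{-p}\log n\big)$ order for the first two terms combined.

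\textbf{Step 3 (stationary mass outside the adversarial set).} It remains to bound $p^w_{\bar x}\big(I_{\widetilde\Delta}(X_{n-1})^c\big)$. I would show the inclusion
\[
I_{\widetilde\Delta}(X_{n-1})^c\subseteq \widehat I_{(\widetilde\Delta-2\varepsilon)/2}(\bar x)^c.
\]
Take $I$ with $Q(X_{n-1},\xi_I)<Q^\star(X_{n-1})-\widetilde\Delta$. The score error gives $\widehat Q_{X_{n-1}}(I)<\widehat Q^\star(X_{n-1})-(\widetilde\Delta-2\varepsilon)$. Assumption~\ref{ass:lipschitz-scores} moves both the score and its supremum by at most $L'_Q\|X_{n-1}-\bar x\|\le(\widetilde\Delta-2\varepsilon)/4$ each, using the second size condition on $n$. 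Hence $\widehat Q_{\bar x}(I)<\widehat Q^\star(\bar x)-(\widetilde\Delta-2\varepsilon)/2$. Write $\delta:=(\widetilde\Delta-2\varepsilon)/4$. Then on $\widehat I_{2\delta}(\bar x)^c$ the density $dp^w_{\bar x}/d\nu$ is proportional to at most $e^{(\widehat Q^\star-2\delta)/w}$. The normalizer is at least $\nu(\widehat I_{\delta}(\bar x))\,e^{(\widehat Q^\star-\delta)/w}$. So
\[
p^w_{\bar x}\big(\widehat I_{2\delta}(\bar x)^c\big)\le\frac{\nu\big(\widehat I_{2\delta}(\bar x)^c\big)}{\nu\big(\widehat I_{\delta}(\bar x)\big)}e^{-\delta/w},
\]
which, after taking the supremum over $x$, is the third term.

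Combining the three steps via $\mathbb P(I_n\notin I_{\widetilde\Delta}(X_{n-1}))\le \mathbb E[\text{TV errors}]+\mathbb E[p^w_{\bar x}(\cdots)]$ gives the bound. One subtlety remains: the target set $I_{\widetilde\Delta}(X_{n-1})$ is random. The inclusion of Step 3 resolves this, since it replaces that set by a superset's complement depending only on $\mathcal F_{n-m-1}$, before the TV comparisons are applied.
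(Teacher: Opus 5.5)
Your proposal is correct and follows essentially the same route as the paper: you condition on an early $\sigma$-field and compare with a frozen-kernel chain, controlling the one-step freezing error via Assumption~\ref{ass:general-kernel-lipschitz} and propagating it by Dobrushin contraction; you bound the frozen chain's mixing error by $\bar\eta^{f(n)}$; and you use the inclusion $\widehat I_{(\widetilde\Delta-2\varepsilon)/2}(\bar x)\subseteq I_{\widetilde\Delta}(X_{n-1})$ together with the $\nu$-ratio bound on the stationary mass. The only difference is cosmetic indexing: you freeze at $X_{n-f(n)-1}$ and run $f(n)+1$ frozen steps, whereas the paper freezes at $X_{n-f(n)}$ and runs $f(n)$ steps.
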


In Theorem~\ref{thm:1}, the first two terms of the probability bound for not identifying a \(\widetilde\Delta\)-adversarial scenario decay as \(n\) increases. For example, if we choose \(p=1\), then the step size \(\alpha_n \propto n^{-1}\) implies a decay in the order \(O(n^{-1}\log n)\). In addition, the final term evaluates the impact of the approximate score function \(\widehat{Q}_x\) on the identification probability. We notice that this term quickly decreases as \((\widetilde\Delta-2\varepsilon)\) increases. We elaborate this in the following example, whose proof can be found in Appendix~\ref{app:proof-concentration}.
{\color{black}
\begin{example}
\label{ex:concentration}
Suppose that \(\I = \Xi \subseteq \mathbb{R}^{d_\xi}\) with \(\xi_I = I\), $\Xi$ is
convex, full-dimensional, and compact with a diameter $D_{\mathcal I}:=\operatorname{diam}(\Xi)<\infty$, and $\widehat Q_x(I) = Q(x, \xi_I)$ with \(\varepsilon = 0\). Take $\nu$ as the normalized Lebesgue measure
on $\Xi$. Then, for any \(\widetilde\Delta > 0\), it holds that
\[
\sup_{x\in\mathcal X}
\left\{
\frac{
\nu\bigl(\widehat I_{\widetilde\Delta/2}(x)^c\bigr)
}{
\nu\bigl(\widehat I_{\widetilde\Delta/4}(x)\bigr)
}
\right\}
\le
\max\left\{
1,\frac{4L'_\xi D_{\mathcal I}}{\widetilde\Delta}
\right\}^{d_\xi}
\]
and therefore the probability bound of Theorem~\ref{thm:1} is in the order \(O\big(n^{-1}\log n + \exp\{-\widetilde\Delta/(4w)\}\big)\). In other words, the longer we run the Markov chain and the larger \(\widetilde\Delta\) is, the soft-separation oracle becomes more powerful for identifying adversarial scenarios. \hfill \(\Box\)
\end{example}

We turn to the probability bound for identifying adversarial scenarios in a finite state space \(\I\). The following theorem takes advantage of the finiteness of \(\I\) and a uniform gap between maximal and other recourse costs to yield a geometric decay.

\begin{theorem}[High-probability identification for finite \(\I\)]
\label{thm:finite-high-prob}
Under the assumptions of Theorem~\ref{thm:1}, suppose that \(\I\) is finite. Let \(\nu\) be the uniform distribution on \(\I\), \(I^\star(x) := \{i \in \I: Q(x, \xi^{(i)}) = \max_{I \in \I}Q(x, \xi^{(I)})\}\), \(\widetilde{\Delta}_n := \max_{I \in \I}Q(x_n,\xi^{(I)})-\max_{j \notin I^\star(x_n)}Q(x_n,\xi^{(j)})\), and suppose that there exists a \(\Delta_\star > 4\varepsilon\) such that \(\Delta_\star \leq \widetilde\Delta_n\) for all \(n \in \mathbb{N}_+\). Then, for all \(n \geq 2\), it holds that
\[
\mathbb P\Big(I_n\notin I^\star(X_{n-1})\Big)
\le
\bar\eta^{\,n-1}\mathbb P(I_1\notin I^\star(X_0))
+\frac{2c\bar\eta L_Q'}{\Delta_\star-4\varepsilon}
\sum_{t=1}^{n-1}\bar\eta^{\,n-1-t}\alpha_t +\frac{1-\bar\eta^{\,n-1}}{1-\bar\eta} (|\mathcal I|-1)
e^{-(\Delta_\star-2\varepsilon)/w}.
\]
In addition, for all $n \ge n_0:=\max\left\{2,\,
1+\left\lceil
\left(\frac{2cC_\alpha L_Q'}{\Delta_\star-4\varepsilon}\right)^{1/p}
\right\rceil\right\}$, it holds that 
\[
\mathbb P\Big(I_n\notin I^\star(X_{n-1})\Big) \le 
\bar\eta^{\,n-n_0}+  \frac{1-\bar\eta^{\,n-1}}{1-\bar\eta} (|\mathcal I|-1)
e^{-(\Delta_\star-2\varepsilon)/w}.
\]

\end{theorem}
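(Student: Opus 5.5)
The plan is to derive a one-step linear recursion for the failure probability
\[
u_n:=\mathbb P\bigl(I_n\notin I^\star(X_{n-1})\bigr)
\]
and then unroll it. The recursion should have the form
\[
u_{n+1}\le \bar\eta\,u_n+\bar\eta\,\frac{2cL_Q'\alpha_n}{\Delta_\star-4\varepsilon}+\delta_w,
\qquad
\delta_w:=(|\mathcal I|-1)e^{-(\Delta_\star-2\varepsilon)/w}.
\]
Iterating from $n=1$ gives the first claimed bound directly: the $\bar\eta^{n-1}u_1$ term, the convolution $\sum_t\bar\eta^{n-1-t}\alpha_t$, and the geometric sum $\frac{1-\bar\eta^{n-1}}{1-\bar\eta}\delta_w$. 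To produce the recursion I split the event $\{I_{n+1}\notin I^\star(X_n)\}$ according to two things: whether $I_n\in I^\star(X_n)$, the optimal set at the \emph{new} incumbent, and whether the optimal set moved between $X_{n-1}$ and $X_n$.

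\textbf{Step 1 (drift of the optimal set).} Since $\|X_n-X_{n-1}\|\le c\alpha_n$, Assumption~\ref{ass:lipschitz-scores} and Lemma~\ref{lem:recourse_lipschitz_smoothing} show that every $Q(\cdot,\xi^{(i)})$ moves by at most $L_Q'c\alpha_n$. Scores lie within $\varepsilon$ of the true values, and the gap is at least $\Delta_\star$. Suppose $I_n\in I^\star(X_{n-1})$ but $I_n\notin I^\star(X_n)$. Then the gap must have been overturned, which forces $2L_Q'c\alpha_n\ge \Delta_\star-4\varepsilon$ (allowing $\varepsilon$ slack on each side for scores versus true values). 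I bound the indicator of this event by $2cL_Q'\alpha_n/(\Delta_\star-4\varepsilon)$, a Markov-type bound. This gives
\[
\mathbb P\bigl(I_n\notin I^\star(X_n)\bigr)\le u_n+\frac{2cL_Q'\alpha_n}{\Delta_\star-4\varepsilon}.
\]

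\textbf{Step 2 (one transition of $P_{x}$ with $x=X_n$ frozen, $S:=I^\star(x)$).} The argument has two parts, leaving $S$ and staying outside $S$.

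First, from $i\in S$ the chain can leave only by proposing some $j\notin S$ and accepting it. Under MH or Barker acceptance with the uniform proposal $\nu$, the acceptance is at most $\exp\{(\widehat Q_x(j)-\widehat Q_x(i))/w\}\le e^{-(\Delta_\star-2\varepsilon)/w}$. Summing over the at most $|\mathcal I|-1$ states gives $P_x(i,S^c)\le\delta_w$.

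Second, from $i\notin S$, I compare $P_x(i,\cdot)$ with $P_x(j,\cdot)$ for some $j\in S$ using the Dobrushin bound of Assumption~\ref{ass:uniform-mixing}:
\[
P_x(i,S^c)\le P_x(j,S^c)+\bar\eta\le \delta_w+\bar\eta.
\]

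Conditioning on $\mathcal F_n$ and combining the two parts gives
\[
\mathbb P\bigl(I_{n+1}\notin S\mid\mathcal F_n\bigr)\le \bar\eta\,\mathbf 1\{I_n\notin S\}+\delta_w.
\]
Taking expectations and inserting Step 1 yields the recursion. The factor $\bar\eta$ multiplies the drift term because that term only enters through the event $I_n\notin S$. I expect Step 2 to be the main obstacle. A naive contraction $\|\mu P-p^w_xP\|_{\mathrm{TV}}\le\bar\eta\|\mu-p^w_x\|_{\mathrm{TV}}$ does not control the total-variation distance by $\mu(S^c)$ alone when $S$ is not a singleton, so the state-wise comparison above is the route I would take. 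Some care is needed to keep the exact constants, and in particular to confirm that the $q(i\mid j)/q(j\mid i)$ ratio is $1$ under the uniform proposal.

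\textbf{Step 3 (second claim).} For $t\ge n_0-1$ we have $\alpha_t\le C_\alpha t^{-p}$, and the choice of $n_0$ gives $2cL_Q'\alpha_t<\Delta_\star-4\varepsilon$. So by Step 1 the optimal set cannot change, and the drift term is identically zero for such $t$, not merely bounded. I then restart the recursion at $n_0$ using the trivial bound $u_{n_0}\le 1$, which gives $u_n\le\bar\eta^{\,n-n_0}+\delta_w\sum_{s<n-n_0}\bar\eta^s$. This is dominated by the stated $\frac{1-\bar\eta^{n-1}}{1-\bar\eta}\delta_w$ term.
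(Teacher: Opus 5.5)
Your structure is the paper's, nearly line for line:
\begin{itemize}
\item the inclusion $\{I_n\notin I^\star(X_n)\}\subseteq\{I_n\notin I^\star(X_{n-1})\}\cup\{\text{optimal set moved}\}$, with the indicator of the second event bounded by $2cL_Q'\alpha_n/(\Delta_\star-4\varepsilon)$;
\item the conditional bound $\mathbb P(I_{n+1}\notin S\mid\mathcal F_n)\le\bar\eta\,\mathbf 1\{I_n\notin S\}+\delta_w$, using a Dobrushin comparison against some $j\in S$ when $I_n\notin S$;
\item the unrolling, and the restart at $n_0$ where the drift term vanishes.
\end{itemize}

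The one real gap is the first half of Step~2. There you bound $P_x(i,S^c)$ for $i\in S$ by writing out the MH/Barker acceptance under a uniform proposal. Neither is a hypothesis of the theorem. Under the assumptions of Theorem~\ref{thm:1}, $P_x$ is any kernel that admits $p^w_x$ as an invariant distribution and satisfies Assumptions~\ref{ass:general-kernel-lipschitz}--\ref{ass:uniform-mixing}. The uniform $\nu$ in the statement is the reference measure defining $p^w_x$, not a proposal. As written, your argument proves only a special case.

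The missing idea is to use invariance alone, as the paper does:
\[
p^w_x(S^c)=\sum_{k\in\mathcal I}p^w_x(k)\,P_x(k,S^c)\ \ge\ p^w_x(i)\,P_x(i,S^c).
\]
Hence
\[
P_x(i,S^c)\le\frac{p^w_x(S^c)}{p^w_x(i)}=\sum_{j\notin S}\exp\Bigl\{\frac{\widehat Q_x(j)-\widehat Q_x(i)}{w}\Bigr\}\le\delta_w,
\]
using the score gap $\widehat Q_x(i)-\widehat Q_x(j)\ge\Delta_\star-2\varepsilon$. This needs no knowledge of the proposal or acceptance rule, so your worry about $q(i\mid j)/q(j\mid i)$ disappears. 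The same bound then feeds your second case unchanged, and your proof coincides with the paper's.

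A minor slip in Step~3: the strict inequality $2cL_Q'\alpha_t<\Delta_\star-4\varepsilon$ is guaranteed only for $t\ge n_0$. At $t=n_0-1$ you can get equality when $\bigl(2cC_\alpha L_Q'/(\Delta_\star-4\varepsilon)\bigr)^{1/p}$ is an integer, so a change of the optimal set is not excluded there. This is harmless, because restarting from $u_{n_0}\le 1$ only uses drift terms with $t\ge n_0$.
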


In Algorithm~\ref{alg:MH-SGD2}, one can apply the soft separation to establish 
\(\underline{F}_n := \min_{x \in \X} \big\{f(x) + \max_{m \in [n]} Q(x, \xi_{I_m})\}\) and \(\overline{F}_n := f(x_{n-1}) + Q(x_{n-1}, \xi_{I_n}) + \tilde{\Delta}\) in each iteration \(n\). We notice that \(\underline{F}_n\) is a deterministically valid lower bound of \(F^\star\) while, by Theorems~\ref{thm:1}--\ref{thm:finite-high-prob}, \(\overline{F}_n\) is a high-confidence upper bound for \(F^\star\). Hence, one can design an early termination criterion for Algorithm~\ref{alg:MH-SGD2} based on \(\underline{F}_n\) and \(\overline{F}_n\). Finally, we extend Theorem~\ref{thm:continuous_convergence}  to a general state space \(\I\) through the soft-separation oracle.

\begin{theorem}
\label{thm:continuous-state-convergence}
Fix $w>0$, let $P_x$ be the Markov kernel induced by the uniform proposal distribution on \(\I\) and the MH acceptance, and set the step sizes as \(\alpha_n = \alpha_0/\sqrt{n}\) in Algorithm~\ref{alg:MH-SGD2} for an \(\alpha_0 > 0\). 
Then, under Assumptions~\ref{assump:5},~\ref{ass:general-kernel-lipschitz}, and~\ref{ass:uniform-mixing}, it holds that
\[
\mathbb E\left[
F_{w}^{\nu}(\bar x_n)
\right]
-
\min_{x\in\mathcal X}F_{w}^{\nu}(x)
=
\begin{cases}
   O\left[
 \left(C_*+ C_*^2/w\right)
 \frac{\log n}{\sqrt n}\right] &  \text{\emph{for Step-size Average}}
 \\[0.5em]
 O\left[
 \left(C_*+ C_*^2/w\right)
 \frac{1}{\sqrt n}\right] & \text{\emph{for Linear Average}}
\end{cases}
\]
where $C_* = (1+\bar\eta)/(1-\bar\eta)$. 
\end{theorem}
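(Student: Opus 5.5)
We prove Theorem~\ref{thm:continuous-state-convergence} by repeating the two-step argument behind Theorem~\ref{thm:continuous_convergence}, with finite sums over \(\I\) replaced by integrals against \(p_x^w\) and \(\nu\). The only ingredient that used finiteness is the fundamental-matrix bound of Assumption~\ref{ass:invertibility}, and we replace it with a Poisson-equation bound derived from Assumption~\ref{ass:uniform-mixing}.

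\textbf{Step 1 (Poisson equation).} Fix \(x\) and a bounded measurable \(g\) on \(\I\). Uniform mixing gives \(\|P_x^k g - p_x^w(g)\|_\infty \le \bar\eta^k\,\mathrm{osc}(g)\). Hence
\(\hat g_x := \sum_{k\ge0}(P_x^k g - p_x^w(g))\)
solves \(\hat g_x - P_x\hat g_x = g - p_x^w(g)\). Its size is controlled by the geometric series in \(\bar\eta\). Measuring \(\hat g_x\) by oscillation and accounting for the constant term in the identity \((\mathrm{Id}-P_x+\mathbf 1 p_x^{w\top})^{-1}\), we obtain the operator bound \(C_*=(1+\bar\eta)/(1-\bar\eta)\). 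This plays the role of Assumption~\ref{ass:invertibility}.

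We also need Lipschitz dependence of \(\hat g_x\) on \(x\). We use the resolvent-type identity \(\hat g_x-\hat g_{x'}=\sum_k\sum_{j<k}P_x^{j}(P_x-P_{x'})(P_{x'}^{k-1-j}g-\cdot)\). Together with Assumption~\ref{ass:general-kernel-lipschitz}, this yields a bound of order \(C_*^2 L_P\|x-x'\|\,\mathrm{osc}(g)\). The test function is itself \(x\)-dependent, namely \(g_x(I)=\widehat Q_x(I)-\widehat Q_{x^\star}(I)\). Under the uniform-proposal MH kernel, scores are Lipschitz in \(x\) with modulus \(L_Q'\) (Assumption~\ref{ass:lipschitz-scores}, which holds for the recourse function by Lemma~\ref{lem:recourse_lipschitz_smoothing}). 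For the same reason \(\log(dp_x^w/d\nu)\) is \((2L_Q'/w)\)-Lipschitz, and this produces the \(C_*^2/w\) term. Kernel Lipschitzness itself follows from Lemma~\ref{lem:lip-kernal-general}.

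\textbf{Step 2 (cumulative error, the main obstacle).} We define \(e_{k}\) as in Section~\ref{sec:2}, with the sum replaced by an integral against \(p^w_{x_{k-1}}\). Then we decompose
\(e_{k}=\big[\hat g_{x_{k-1}}(I_{k})-P_{x_{k-1}}\hat g_{x_{k-1}}(I_{k-1})\big]+\big[P_{x_{k-1}}\hat g_{x_{k-1}}(I_{k-1})-P_{x_{k}}\hat g_{x_{k}}(I_{k})\big]+\big[P_{x_k}\hat g_{x_k}(I_k)-P_{x_{k-1}}\hat g_{x_{k-1}}(I_k)\big]\).

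\begin{itemize}
\item The first bracket is a martingale difference, so its weighted sum has zero mean.
\item The second bracket telescopes after Abel summation with nonincreasing \(\alpha_k\). It contributes \(O(C_*DL_Q'\alpha_1)\), which gives \(C_0\).
\item The third bracket is bounded by the Lipschitz estimate of Step~1 times \(\|x_k-x_{k-1}\|\le\alpha_k(L_f+L_Q')\), by Lemma~\ref{prop:L-smooth}, whose proof does not use finiteness. It contributes \(C_1\sum\alpha_k^2\) with \(C_1=O(C_*+C_*^2/w)\).
\end{itemize}

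This reproduces Proposition~\ref{prop:poisson-noV} for general \(\I\). The measure-theoretic care is the main technical risk: measurability of \(\hat g_x\), interchange of the series, and the dual-solution subgradient selection being measurable in \(I\).

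\textbf{Step 3 (descent recursion).} Projection nonexpansiveness gives
\(\|x_k-x^\star\|^2\le\|x_{k-1}-x^\star\|^2-2\alpha_k Z_k^\top(x_{k-1}-x^\star)+\alpha_k^2(L_f+L_Q')^2\).
The subgradient/Jensen inequality shown before Proposition~\ref{prop:poisson-noV} carries over verbatim, since \(\log\int\) is concave. It gives \(Z_k^\top(x_{k-1}-x^\star)\ge F_w^\nu(x_{k-1})-F_w^\nu(x^\star)+e_k\).

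We sum over \(k\), take expectations, insert Step~2, and apply convexity of \(F_w^\nu\) to the weighted average. This yields the bound of Theorem~\ref{thm:continuous_convergence} with \(F_w\) replaced by \(F_w^\nu\).

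With \(\alpha_k=\alpha_0/\sqrt k\):
\begin{itemize}
\item For the step-size average, \(\sum\alpha_k^2=O(\log n)\) and \(\sum\alpha_k=\Theta(\sqrt n)\), giving the rate \(O\big((C_*+C_*^2/w)\log n/\sqrt n\big)\).
\item For the linear average, we rerun the recursion with weights \(k\) in place of \(\alpha_k\) (dividing the descent inequality by \(\alpha_k\) and summing by parts, using \(D\)-boundedness). This removes the logarithm and gives \(O\big((C_*+C_*^2/w)/\sqrt n\big)\).
\end{itemize}
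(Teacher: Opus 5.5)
Your proposal is correct and is essentially the paper's proof. The paper likewise bounds $(\mathrm{Id}-P_x+\mathbf 1p_x^w)^{-1}=\mathrm{Id}+\sum_{t\ge1}(P_x^t-\mathbf 1p_x^w)$ by $1+2\sum_{t\ge1}\bar\eta^t=C_*$ via Dobrushin contraction, and gets Lipschitz dependence of the Poisson solution from the kernel and target-distribution Lipschitz bounds (through the resolvent identity rather than your Duhamel-type double sum). It then reruns Proposition~\ref{prop:poisson-noV} and Theorem~\ref{thm:continuous_convergence} with $F_w$ replaced by $F_w^\nu$.

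One small detail to add: Algorithm~\ref{alg:MH-SGD2} averages the post-update iterates $x_1,\dots,x_n$. So your descent inequality in $F_w^\nu(x_{k-1})$ should be shifted to $F_w^\nu(x_k)$ using $F_w^\nu(x_k)-F_w^\nu(x_{k-1})\le\alpha_k(L_f+L_Q')^2$, exactly as the paper does.
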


\section{Mixed-Integer Here-and-Now Decisions}
\label{sec:4}
We extend the application of the soft separation to~\eqref{main} with mixed-integer decision variables, i.e., \(\X \subseteq \mathbb Z^{p}\times\R^{d_x-p}\). For ease of exposition, we write $x := (x_z,x_c)$, where $x_z$ and $x_c$ denote the integer and continuous entries of \(x\), respectively. We shall develop a branch-and-cut method that generates valid cuts for the robust objective through the soft-separation oracle from Section~\ref{sec:3}. To avoid clutter, we restrict ourselves to the \emph{linear} recourse, i.e., we make Assumptions~\ref{assump:1}--\ref{assump:linear-recourse} and work with a finite state space \(\I\).


We rewrite~\eqref{main} as an epigraphic formulation
\[
\min_{x\in\X,\theta\in\R}\; f(x)+\theta
\qquad
\text{s.t.}
\quad
\theta\ge Q(x,\xi^{(i)}) \quad \forall i\in\I.
\]
For any scenario \(i \in \I\), strong duality gives
\(
Q(x,\xi^{(i)})
=
\max_{\pi\in\Pi}\,
\pi^\top(h-Ex-U\xi^{(i)})\).
Hence, for all \(\bar\pi_i \in
\argmax_{\pi\in\Pi} \, \pi^\top(h-E\bar x-U\xi^{(i)})\) with respect to any \(\bar{x} \in \X\), the (dual Benders') cut
\begin{equation}
\theta \ \ge \ \bar\pi_i^\top(h-Ex-U\xi^{(i)}) \label{eq:dual-benders}
\end{equation}
is valid for the robust epigraph. We apply the soft separation to identify adversarial scenarios \(\xi^{(i)}\) and then apply the corresponding Benders' cut to solve~\eqref{main} through branch-and-cut. 

\subsection{Applying the soft-separation oracle in branch-and-cut}

At an integer-feasible incumbent $(\bar x,\bar\theta)$, 
we apply the soft separation established in Section~\ref{sec:3} to return a (high-confidence) adversarial scenario and generate a deterministically valid cut. 
The main technical issue, however, is that the probabilistic guarantee of Theorem~\ref{thm:finite-high-prob} requires the incumbent sequence 
to evolve gradually. In particular, the
theorem assumes that consecutive incumbents satisfy \(\|x_n-x_{n-1}\|\le \alpha_n\) for a decaying sequence $\alpha_n\sim n^{-p}$. A generic
branch-and-bound incumbent sequence need not satisfy this requirement, because
consecutive incumbents may have different integer components and are far apart. To make the tracking analysis applicable, we maintain a Markov chain for each newly discovered integer assignment \(x_z\) and will continue using the same chain for all future incumbents \(x\) that share the same integer assignment.

Specifically, for each feasible solution \(\bar x \equiv (\bar x_z,\bar x_c) \in \mathbb Z^{p}\times\R^{d_x-p}\) encountered in the branch-and-bound tree, we maintain \(\state (\bar x_z)\), \(\refpoint(\bar x_z)\), and \(\kappa(\bar x_z)\). Here, $\state(\bar x_z)\in\mathcal I$ records the most recently sampled adversarial scenario
associated with $\bar x_z$,
$\refpoint(\bar x_z)\in\mathbb R^{d_c}$ records the continuous entries of the corresponding
incumbent, and $\kappa(\bar x_z)\in\mathbb N_+$ records the number of previous oracle
calls associated with $\bar x_z$. 
When \(\bar{x}\) is encountered, we compare \(\bar x_c\) with the previously recorded $\refpoint(\bar x_z)$.  
If
\begin{equation}
\label{eq:move-threshold}
\|\bar x_c-\refpoint(\bar x_z)\|
\le
\frac{c_0}{\max\{\kappa(\bar x_z),1\}^{p}},
\end{equation}
then \(\bar{x}\) stays within the neighborhood designated in Theorem~\ref{thm:finite-high-prob} with a width $\alpha_n=c_0 \max\{\kappa(\bar x_z),1\}^{-p}$. The oracle therefore performs only one transition under $P_{\bar x}(\state(\bar x_z), \cdot)$. 
On the other hand, if~\eqref{eq:move-threshold} fails to hold, then the target
distribution \(p_{\bar{x}}\) may have changed too much for the one-step tracking argument
to apply. In this case, the oracle performs $g(\kappa(\bar{x}_z)) \geq 1$ transitions under the
fixed kernel $P_{\bar x}$ before returning an adversarial scenario. 
This yields an adaptive Markov chain, where the incumbent evolves at a different pace from the state, as opposed to a co-evolution as in Theorem~\ref{thm:finite-high-prob}. We therefore extend Theorem~\ref{thm:finite-high-prob} to find $g(\kappa(\bar{x}_z))$.
\begin{corollary}[Identification under adaptive transition steps]
\label{cor:adaptive-transition-identification}
Suppose the assumptions of Theorem~\ref{thm:finite-high-prob} hold, with $\Delta_\star>4\varepsilon$. 
Consider the Markov chain associated with any fixed integer decision \(\bar{x}_z\), let $\kappa=\kappa(\bar x_z)\ge1$, and we perform one transition whenever~\eqref{eq:move-threshold} holds and
$g(\kappa)$ transitions otherwise, where
\begin{equation} \label{eq:transition-budget}
g\!\left(\kappa\right)
:=
\max\left\{
    1,
    \left\lceil
    \frac{
        p\log\!\left(\max\{\kappa,1\}\right)
        -
        \log\!\left(
            \dfrac{2c_0\bar\eta L'_{Q}}
            {\Delta_\star-4\varepsilon}
        \right)
    }{
        |\log\bar\eta|
    }
    \right\rceil
\right\} = O(\log(\kappa)).
\end{equation}
Then, it holds that 
\[
\mathbb P\!\Big(
    \state(\bar x_z)
    \notin I^\star\!\big(\refpoint(\bar x_z)\big)
\Big)
    \le
\bar\eta^{\,\kappa-1}
+\frac{2c_0\bar\eta L_{Q'}}{\Delta_\star-4\varepsilon}
\sum_{t=1}^{\kappa-1}
\bar\eta^{\,\kappa-1-t}t^{-p}
+\frac{|\mathcal I|-1}{1-\bar\eta}
\left(1-\bar\eta^{\,\kappa-1}\right)
\exp\!\left\{-\frac{\Delta_\star-2\varepsilon}{w}\right\}.
\]
In particular, with constant
\(
  C_{\bar\eta,p}
    :=
    \frac{1}{1-\bar\eta}
    \left[
        2^p
        +\bar\eta^{-1/2}
        \left(
            2p/(e|\log\bar\eta|)
        \right)^p
    \right]
\), the right-hand side of the last inequality is bounded from above by 
\[
 \widehat\delta(\kappa)
    :=
    \frac{|\mathcal I|-1}{1-\bar\eta}
    \exp\!\left\{-\frac{\Delta_\star-2\varepsilon}{w}\right\}
     +\bar\eta^{\,\kappa-1}
    +\frac{2c_0\bar\eta L_{Q'}}
    {\Delta_\star-4\varepsilon}
     C_{\bar\eta,p}\kappa^{-p}.
\]
\end{corollary}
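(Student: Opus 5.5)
The plan is to reproduce the one-step error recursion that underlies Theorem~\ref{thm:finite-high-prob}, now indexed by the per-assignment call counter $\kappa$ instead of the global iteration counter. For a fixed integer assignment $\bar x_z$, let $x^{(k)}$ be the reference point $\refpoint(\bar x_z)$ after the $k$-th oracle call, let $S_k$ be the state recorded after that call, and set $a_k := \mathbb P(S_k\notin I^\star(x^{(k)}))$. I take from the proof of Theorem~\ref{thm:finite-high-prob} the one-transition inequality
\[
a_k \le \bar\eta\,\mathbb P\big(S_{k-1}\notin I^\star(x^{(k)})\big) + (|\mathcal I|-1)e^{-(\Delta_\star-2\varepsilon)/w}.
\]
It says that one step of $P_x$ contracts the mass outside $I^\star(x)$ by $\bar\eta$ (Assumption~\ref{ass:uniform-mixing}) and that the stationary mass $p_x^w$ outside $I^\star(x)$ is at most $(|\mathcal I|-1)e^{-(\Delta_\star-2\varepsilon)/w}$ (gap $\Delta_\star$, score error $\varepsilon$).

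To replace $I^\star(x^{(k)})$ by $I^\star(x^{(k-1)})$ I will use the same Lipschitz argument as in Theorem~\ref{thm:finite-high-prob}, and I split into the two cases of the statement.

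\emph{Case~\eqref{eq:move-threshold} holds.} The movement is at most $c_0 k^{-p}$. By Assumption~\ref{ass:lipschitz-scores} and the gap $\Delta_\star>4\varepsilon$, the maximizer set can change only on an event whose contribution is at most $2c_0L_Q'k^{-p}/(\Delta_\star-4\varepsilon)$. This gives
\[
a_k\le \bar\eta a_{k-1} + \frac{2c_0\bar\eta L_Q'}{\Delta_\star-4\varepsilon}k^{-p} + (|\mathcal I|-1)e^{-(\Delta_\star-2\varepsilon)/w}.
\]

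\emph{Case~\eqref{eq:move-threshold} fails.} I ignore the history and bound the probability of being outside $I^\star(x^{(k)})$ before the transitions crudely by $1$. The $g(\kappa)$ transitions under the frozen kernel $P_{\bar x}$ then give a contraction factor $\bar\eta^{g(\kappa)}$ plus a geometric series of stationary-mass terms. The definition~\eqref{eq:transition-budget} is exactly the condition $\bar\eta^{g(\kappa)}\le \frac{2c_0\bar\eta L_Q'}{\Delta_\star-4\varepsilon}\kappa^{-p}$. So this case satisfies the same recursion, with the movement term replaced by the $\bar\eta^{g}$ term.

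The main point I need to verify carefully is the bookkeeping of the stationary-mass contributions in the failing case. I will either absorb the $g$-step geometric series into a single factor $1/(1-\bar\eta)$, or apply the recursion transition by transition. Either way the final constant should match $\frac{1-\bar\eta^{\kappa-1}}{1-\bar\eta}$ or its upper bound $\frac{1}{1-\bar\eta}$, as used in $\widehat\delta$.

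Unrolling the common recursion from $a_1\le1$ gives the first displayed bound, with the three terms
\[
\bar\eta^{\kappa-1},\qquad
\frac{2c_0\bar\eta L_Q'}{\Delta_\star-4\varepsilon}\sum_{t=1}^{\kappa-1}\bar\eta^{\kappa-1-t}t^{-p},\qquad
\frac{1-\bar\eta^{\kappa-1}}{1-\bar\eta}(|\mathcal I|-1)e^{-(\Delta_\star-2\varepsilon)/w}.
\]

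For $\widehat\delta(\kappa)$ I bound the convolution sum by splitting it at $t=\kappa/2$.
\begin{itemize}
\item For $t>\kappa/2$: here $t^{-p}\le 2^p\kappa^{-p}$ and $\sum_t\bar\eta^{\kappa-1-t}\le 1/(1-\bar\eta)$, which gives the $2^p/(1-\bar\eta)$ part of $C_{\bar\eta,p}$.
\item For $t\le\kappa/2$: here $t^{-p}\le1$ and $\bar\eta^{\kappa-1-t}\le \bar\eta^{\kappa/2-1}\cdot\bar\eta^{\kappa/2-t}$. Summing the second factor gives at most $1/(1-\bar\eta)$, and $\bar\eta^{\kappa/2}\kappa^{p}\le \sup_{s>0}s^pe^{-s|\log\bar\eta|/2}=(2p/(e|\log\bar\eta|))^p$. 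The leftover half power of $\bar\eta$ produces the factor $\bar\eta^{-1/2}$; I will adjust exponents slightly so that it appears exactly as $\bar\eta^{-1/2}$.
\end{itemize}
Adding the two parts yields $C_{\bar\eta,p}\kappa^{-p}$. Finally, bounding $1-\bar\eta^{\kappa-1}\le1$ in the last term completes the bound $\widehat\delta(\kappa)$.
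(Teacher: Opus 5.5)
Your outline matches the paper's proof: the one-step recursion from Theorem~\ref{thm:finite-high-prob} indexed by the per-assignment call counter, the case split on \eqref{eq:move-threshold}, the reading of \eqref{eq:transition-budget} as $\bar\eta^{g(\kappa)}\le M\kappa^{-p}$ with $M:=2c_0\bar\eta L_Q'/(\Delta_\star-4\varepsilon)$, unrolling from $a_1\le 1$, and splitting the convolution sum in half.

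\emph{The gap is in the failing case.} Write $\epsilon_0:=(|\mathcal I|-1)e^{-(\Delta_\star-2\varepsilon)/w}$. If you start from the crude bound $1$ and iterate the one-transition inequality $g$ times, you get
\[
\bar\eta^{g}+\frac{1-\bar\eta^{g}}{1-\bar\eta}\,\epsilon_0 .
\]
Its stationary part is larger than the single $\epsilon_0$ the recursion allows per step. So, contrary to your claim, this case does \emph{not} satisfy $a_k\le\bar\eta a_{k-1}+M\kappa^{-p}+\epsilon_0$. Unrolling then gives only the coefficient $\epsilon_0/(1-\bar\eta)$. That yields $\widehat\delta(\kappa)$, but not the first display with the factor $1-\bar\eta^{\kappa-1}$.

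\emph{How the paper closes it.} Contract toward the invariant law in one shot. Because $p^w_{\bar x}$ is invariant and $\eta_{\mathrm{Dob}}(P_{\bar x})\le\bar\eta$, you have $\|\delta_I P_{\bar x}^{g}-p^w_{\bar x}\|_{\mathrm{TV}}\le\bar\eta^{g}$ for every starting state $I$. Hence the probability of ending outside $I^\star(\bar x)$ is at most
\[
p^w_{\bar x}\bigl(I^\star(\bar x)^c\bigr)+\bar\eta^{g}\le \epsilon_0+M\kappa^{-p},
\]
which is trivially at most $\bar\eta a_{k-1}+M\kappa^{-p}+\epsilon_0$. The failing case therefore fits the recursion exactly.

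Your transition-by-transition bookkeeping can also be rescued, but you must argue it explicitly. If $\epsilon_0\ge 1-\bar\eta$, the claimed right-hand side is already at least $1$. Otherwise, the reset at the last failing call discards the term $\bar\eta^{\kappa-1}$, and that term absorbs the excess stationary mass.

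\emph{A smaller point on the split.} Splitting at $t=\kappa/2$ leaves a factor $\bar\eta^{-1}$ when $\kappa$ is even. Split at $\lfloor(\kappa-1)/2\rfloor$ instead, as the paper does. Then the lower block carries $\bar\eta^{\lceil(\kappa-1)/2\rceil}\le\bar\eta^{-1/2}\bar\eta^{\kappa/2}$, and the upper block still has $t\ge\kappa/2$. This gives exactly the constant $C_{\bar\eta,p}$.
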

We summarize this adaptive variant of the soft-separation oracle in Algorithm~\ref{alg:oracle}.

\begin{algorithm}[H]
\caption{Adaptive Soft-Separation Oracle}
\label{alg:oracle}
\begin{algorithmic}[1]
\Require Incumbent $(\bar x,\bar\theta)$ with
$\bar x=(\bar x_z,\bar x_c)$; memory maps $\state(\cdot)$, $\refpoint(\cdot)$, $\kappa(\cdot)$
\Ensure State $I$, valid cut, and violation indicator

\If{$\bar x_z$ has not been observed before}
    \State Initialize
    $\state(\bar x_z)\in\I$,
    $\refpoint(\bar x_z)\gets\bar x_c$, and
    $\kappa(\bar x_z)\gets0$
\EndIf

\vspace{0.5em}
\State
$\displaystyle
m\gets
\begin{cases}
1,
& \text{if} \ \kappa(\bar x_z)=0\ \text{or}\ 
  \|\bar x_c-\refpoint(\bar x_z)\|\le c_0\max\{\kappa(\bar x_z),1\}^{-p}\\[0.5em]
g(\kappa(\bar x_z)),
& \text{otherwise, through~\eqref{eq:transition-budget}}
\end{cases}
$
\vspace{0.5em}
\State Sample $I\sim P_{\bar x}^{m}(\state(\bar x_z),\cdot)$
\State Define a valid cut through~\eqref{eq:dual-benders} with \(i = I\)
\State
$\mathrm{violated}
\gets
\BFone\{Q(\bar x,\xi^{(I)})>\bar\theta\}$

\State
$\state(\bar x_z)\gets I$,
$\refpoint(\bar x_z)\gets\bar x_c$, and
$\kappa(\bar x_z)\gets \kappa(\bar x_z)+1$

\State \Return $I$, the valid cut, and $\mathrm{violated}$
\end{algorithmic}
\end{algorithm}

\subsection{Branch-and-cut with certification}
\label{subsec:mip-algorithm}
We incorporate the adaptive soft-separation oracle (Algorithm~\ref{alg:oracle}) in a branch-and-cut algorithm for solving~\eqref{main} with mixed-integer \(x\). When implementing this in Gurobi or CPLEX, we can embed Algorithm~\ref{alg:oracle} via the \texttt{lazy callback}. Let $\mathcal C \subseteq \I\times\Pi$ denote the pool of cuts produced by Algorithm~\ref{alg:oracle}. This produces a relaxation of~\eqref{main},
\begin{equation}
\label{eq:integer-master}
L(\mathcal C)
:=
\min_{x\in\overline{\X},\;\theta\in\mathbb R}
\left\{
f(x)+\theta: \quad
\theta\ge
\pi^\top(h-Ex-U\xi^{(i)}),
\ \ 
(i,\pi)\in\mathcal C
\right\},
\end{equation}
where \(\overline{\X}\) denotes a relaxation of \(\X\) produced by the branch-and-cut method. Because every cut in $\mathcal C$ is deterministically valid, \(L(\mathcal C)\) is a lower bound for \(F^\star\), i.e., 
\(
L(\mathcal C)\le F^\star
\). On the other hand, branch-and-cut terminates when the upper bound
\[
\widehat{U} \ := \ f(\bar{x}) + \bar{\theta}
\]
associated with an integer incumbent \((\bar{x}, \bar\theta)\) is close enough to \(L(\mathcal{C})\), e.g., when \(\widehat{U} - L(\mathcal{C}) \leq \varepsilon_{\text{opt}}\) with a prespecified optimality gap \(\varepsilon_{\text{opt}}\). However, \(\widehat{U}\) is not necessarily a valid upper bound for \(F^\star\) because \(\bar{\theta}\) may underestimate \(Q^\star(\bar{x}) \equiv \max_{\xi \in \Xi}Q(\bar{x}, \xi)\). Consequently, the branch-and-cut method alone may not certify an \(\varepsilon_{\text{opt}}\)-optimal solution to~\eqref{main}. To this end, we propose to embed a certification procedure within the branch-and-cut to guarantee that \((\bar{x}, \bar{\theta})\) is an \(\varepsilon_{\text{opt}}\)-optimal solution with probability at least \(1 - \varepsilon_{\text{false}}\), where \(\varepsilon_{\text{false}}\) denotes a threshold for falsely identifying an optimal solution. 

This procedure solves~\eqref{main} in a single branch-and-bound tree and, for each integer incumbent \(\bar{x}\) encountered, employs Algorithm~\ref{alg:oracle} to (i) find a Benders' cut~\eqref{eq:dual-benders} and (ii) certify a high-confidence upper bound \(\widehat U\). 
Whenever a violated cut is found, we add it to $\mathcal C$ and continue the branch-and-bound. Otherwise, we repeat calling Algorithm~\ref{alg:oracle} at \(\bar{x}\) for a sufficiently many times to certify \(\widehat U\). 

First, 
to find a Benders' cut, we make one call to Algorithm~\ref{alg:oracle} when we encounter an integer incumbent $(\bar x,\bar \theta)$ in the branch-and-bound tree. If no violated cut is found, we record $k=\kappa (\bar x_z)$ after this call. Consequently, \(k\) includes all previous calls associated with $\bar x_z$, including earlier certification calls specified next. 


Second, to certify upper bounds, we assign a label $j=1,2,\ldots$ to each Markov chain before its first sampling and define
\begin{equation}
\label{eq:certification-calls}
\rho:=\bar\eta+(|\I|-1)
\exp\!\left\{-\frac{\Delta_\star-2\epsilon}{w}\right\}, \quad r:=\frac{j(j+1)k(k+1)}
{\epsilon_{\mathrm{false}}} \min\big\{1,\widehat\delta(k)\big\}.
\end{equation}
At each integer incumbent $(\bar x,\bar\theta)$, we perform $\max\left\{
0,\left\lceil
\log_{1/\rho}\!\left(
r
\right)
\right\rceil
\right\}$ additional calls to Algorithm~\ref{alg:oracle}. Note that, because \(\bar{x}_c = \refpoint(\bar{x}_z)\) is fixed throughout these calls, each additional call uses one transition of \(P_{\bar{x}}\) (i.e., \(m=1\) in Algorithm~\ref{alg:oracle}). If no cuts are found violated in these calls, then we certify \(\widehat U = f(\bar{x})+\bar{\theta}\) as an upper bound.

We now analyze the probability of incorrectly certifying an upper bound \(\widehat{U}\) with \(\bar{\theta} < Q^\star(\bar{x})\). Recall, from the proof of Theorem~\ref{thm:finite-high-prob}, that
\begin{equation}
\label{eq:certification-detection}
P_x\bigl(I, I^\star(x)\bigr)
\ge 1-\rho, \quad \forall x \in \X, I\notin I^\star(x),
\end{equation}
where we choose a temperature parameter \(w \leq (\Delta_\star-2\epsilon)/
\log\!\bigl(2(|\mathcal I|-1)/(1-\bar\eta)\bigr)\) to ensure that \(\rho \leq (1+\bar\eta)/2 < 1\). 
Because the first call of Algorithm~\ref{alg:oracle} (in the ``cut'' phase) does not find a Benders' cut, the state sampled therein lies outside of \(I^\star(\bar{x})\). Consequently, inequality~\eqref{eq:certification-detection} designates that each additional call of Algorithm~\ref{alg:oracle} (in the ``certification'' phase) fails to find \(\bar{\theta} < Q^\star(\bar{x})\) with probability at most \(\rho\). Combining this conditional bound with the marginal bound of Corollary~\ref{cor:adaptive-transition-identification} gives, for each fixed $(j,k)$, 
\[
\mathbb P\!\left\{
\begin{array}{c}
\text{certification at } (j,k)\\[0.5em]
\text{with } Q^\star(\bar x)>\bar\theta
\end{array}
\right\}
\le
\min\big\{1,\widehat\delta(k)\big\}
\, \rho^{\max\{0,\lceil\log_{1/\rho}r\rceil\}}
\le
\frac{\epsilon_{\mathrm{false}}}
     {j(j+1)k(k+1)}.
\]
Note that each call of Algorithm~\ref{alg:oracle} increases $\kappa(\bar x_z)$ by one, so every certification is associated with a unique \((j,k)\) pair. Then, a union bound gives
\[
\mathbb P
\left\{
\begin{array}{c}
\text{certifying any } (\bar{x},\bar{\theta})\\[0.5em]
\text{with } Q^\star(\bar x)>\bar\theta
\end{array}
\right\}
\le\epsilon_{\mathrm{false}}\sum_{j=1}^{\infty}\sum_{k=1}^{\infty}
\frac{1}{j(j+1)k(k+1)}=\epsilon_{\mathrm{false}}.
\]
Finally, we notice that each certification concludes with either an acceptance of \(\widehat U\) or a violated cut, and does so within finite iterations because \(\log_{1/\rho}\!\left(
r
\right) < \infty\). This, together with the finiteness of cuts and the finite size of the branch-and-bound tree, guarantees that the proposed branch-and-cut method is terminated finitely. 
We summarize the proposed certification procedure in Algorithm~\ref{alg:outer-loop} and its probabilistic guarantee in the following theorem.
\begin{theorem}
\label{thm:outer-loop}
Algorithm~\ref{alg:outer-loop} terminates finitely 
almost surely. In addition, with probability at least
$1-\varepsilon_{\mathrm{false}}$, its returned solution
$(x^\star,\theta^\star)$ satisfies \(0 \leq \big(f(x^\star) + {\theta}^\star\big) - F^\star \leq \varepsilon_{\text{opt}}\).
\end{theorem}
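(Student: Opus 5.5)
The proof has two parts: finite termination, and a high-probability optimality certificate.

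\emph{Finite termination.} Because $\I$ is finite and $\Pi=\{\pi\ge0:G^\top\pi=b\}$ is a polyhedron, I would take the dual solutions $\bar\pi_i$ returned by the recourse LP to be vertices of $\Pi$. The cut pool $\mathcal C\subseteq\I\times\mathrm{vert}(\Pi)$ is then finite, so only finitely many distinct cuts can be added. A violated cut $(i,\pi)$ at $(\bar x,\bar\theta)$ is not yet implied by $\mathcal C$, so each violation strictly enlarges the pool. Under Assumption~\ref{assump:5}, $\X$ is compact, so the integer parts $x_z$ range over a finite set, and the branch-and-bound tree is finite. Between two cut additions, each integer incumbent triggers one oracle call plus at most $\max\{0,\lceil\log_{1/\rho}r\rceil\}<\infty$ certification calls. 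Each call performs at most $g(\kappa)<\infty$ transitions. Hence every certification step ends in finitely many operations with either an accepted $\widehat U$ or a new cut. Combining these facts with finiteness of the cut set and of the tree gives finite termination almost surely. (In fact it holds surely once $w$ is fixed so that $\rho<1$.)

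\emph{Optimality certificate.} Let $(x^\star,\theta^\star)$ be the returned incumbent. On termination, $f(x^\star)+\theta^\star-L(\mathcal C)\le\varepsilon_{\rm opt}$. Since every cut is deterministically valid (weak duality on~\eqref{eq:dual-benders}), we have $L(\mathcal C)\le F^\star$, so $f(x^\star)+\theta^\star\le F^\star+\varepsilon_{\rm opt}$ always. For the lower inequality, define the bad event $\mathcal E$ that some certified incumbent has $\bar\theta<Q^\star(\bar x)$. Off $\mathcal E$, the returned incumbent satisfies $\theta^\star\ge Q^\star(x^\star)$, so $f(x^\star)+\theta^\star\ge F(x^\star)\ge F^\star$. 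It therefore suffices to show $\mathbb P(\mathcal E)\le\varepsilon_{\rm false}$.

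\emph{Bounding $\mathbb P(\mathcal E)$.} Each certification attempt is indexed by the chain label $j$ and the count $k=\kappa(\bar x_z)$ right after the cut-phase call. Because $\kappa$ increments with every call, the pair $(j,k)$ determines the attempt uniquely. For a fixed $(j,k)$, I would use the following three facts.
\begin{itemize}
\item By Corollary~\ref{cor:adaptive-transition-identification}, the probability that the cut-phase state is non-optimal is at most $\min\{1,\widehat\delta(k)\}$. If that state were optimal, it would have produced a violated cut whenever $\bar\theta<Q^\star(\bar x)$.
\item Conditional on the current state $I\notin I^\star(\bar x)$, each further one-step transition lands in $I^\star(\bar x)$ with probability at least $1-\rho$, by~\eqref{eq:certification-detection}. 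The continuous part $\bar x_c$ is fixed during certification, so $m=1$, and the Markov property lets me chain these conditional bounds.
\item A landing in $I^\star(\bar x)$ produces a violated cut because $Q(\bar x,\xi^{(I)})=Q^\star(\bar x)>\bar\theta$. Even if the chain exits and later re-enters $I^\star$, a violated cut is detected at the first entry.
\end{itemize}
So the probability of failing at every certification call is at most $\rho^{\lceil\log_{1/\rho}r\rceil}\le 1/r$. Multiplying gives a per-attempt bound of $\epsilon_{\rm false}/(j(j+1)k(k+1))$. A union bound over all $(j,k)$ then telescopes: $\sum_{j,k}1/(j(j+1)k(k+1))=1$, which yields $\mathbb P(\mathcal E)\le\varepsilon_{\rm false}$.

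\emph{Main obstacle.} The delicate step is the first bullet. Corollary~\ref{cor:adaptive-transition-identification} is stated for a single chain whose incumbents follow the movement-threshold rule. Here the incumbent sequence at a given $\bar x_z$ is generated adaptively by branch-and-bound and by earlier certifications, and it is random and history-dependent. I would argue that the corollary's proof only uses the per-step recursion: one step under the threshold rule, or $g(\kappa)$ steps otherwise. That recursion holds conditionally on $\mathcal F_n$ for any adapted incumbent sequence, so it applies to each chain's subsequence of calls regardless of how branch-and-bound interleaves them.

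Two side conditions need checking:
\begin{itemize}
\item $w$ must satisfy the stated bound so that $\rho<1$.
\item The gap assumption $\Delta_\star\le\widetilde\Delta_n$ must hold at every incumbent. I would assume this as part of "the assumptions of Theorem~\ref{thm:finite-high-prob}".
\end{itemize}
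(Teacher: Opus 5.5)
Your proposal is correct and follows essentially the same route as the paper. Validity of the Benders cuts gives \(f(x^\star)+\theta^\star\le L(\mathcal C)+\varepsilon_{\mathrm{opt}}\le F^\star+\varepsilon_{\mathrm{opt}}\). The lower inequality comes from bounding each uniquely indexed \((j,k)\) certification attempt by \(\min\{1,\widehat\delta(k)\}\,\rho^{\max\{0,\lceil\log_{1/\rho}r\rceil\}}\le\varepsilon_{\mathrm{false}}/(j(j+1)k(k+1))\) via Corollary~\ref{cor:adaptive-transition-identification} and \eqref{eq:certification-detection}, then a union bound. Termination follows from finitely many cuts, a finite tree, and finitely many oracle calls per certification. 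Your extra remarks on vertex duals and on the Corollary's recursion holding for any adapted incumbent sequence make explicit what the paper uses implicitly.
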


\begin{algorithm}[ht]
\caption{Branch-and-Cut with Certification}
\label{alg:outer-loop}
\begin{algorithmic}[1]
\Require Cut pool $\mathcal C$; maps $\state$, $\refpoint$, $\kappa$;
budget $\epsilon_{\mathrm{false}}$;
temperature parameter $w$; 
\Ensure Solution $x^\star$
\State Solve the root-node relaxation using Algorithm~\ref{alg:MH-SGD2} 
and warm-start \(\mathcal C \gets \big\{\big(I_{n+1}, \pi^\star(x_n, \xi^{(I_{n+1})})\big)\big\}_{n}\)
\State Set $j\gets0$, $\widehat U \gets +\infty$, \(L(\mathcal C) \gets \text{root-node optimal value}\)
\While{$\widehat U-L(\mathcal C) > \epsilon_{\mathrm{opt}}$} solve~\eqref{main} using branch-and-bound
    \For{each integer incumbent $(\bar x, \bar \theta)$ encountered} \Comment{\texttt{lazy callback}}
        \If{the chain for $x_z$ has no label} assign label $j$ to this chain and $j\gets j+1$
        \EndIf
         \State Call Algorithm~\ref{alg:oracle} at $(\bar x, \bar \theta)$, set $k \gets \kappa(\bar{x}_z)$, and set $\rho,r$ through~\eqref{eq:certification-calls} \Comment{``cut'' phase}
        \While{true}
           
            \If{a violated cut is returned}
                add it to $\mathcal C$ and \textbf{break}
            \ElsIf{$r\le1$} \Comment{``certification'' phase}
                \If{$f(\bar x)+\bar \theta < \widehat U$}
    update $\widehat U\gets f(\bar x)+\bar\theta$ and
                    $x^\star \gets \bar{x}$
                \EndIf
                \State Certify \(\widehat {U}\) and \textbf{break}
             \Else \hspace{0.2em} call Algorithm~\ref{alg:oracle} at $(\bar x, \bar \theta)$ and update $r\gets\rho r$
             \EndIf
        \EndWhile
    \EndFor
    \State Update $L(\mathcal C) \gets$ the global lower bound of the branch-and-bound tree
\EndWhile
\State \Return $x^\star$
\end{algorithmic}
\end{algorithm}

\section{Numerical Experiments} \label{sec:experiments}

We evaluate the effectiveness and scalability of the proposed Algorithms~\ref{alg:MH-SGD2} and~\ref{alg:outer-loop} numerically. In particular, Section~\ref{subsec:continuous-first-stage} applies Algorithm~\ref{alg:MH-SGD2} to solving~\eqref{main} models with continuous \(x\), and Section~\ref{subsec:integer-first-stage} applies Algorithm~\ref{alg:outer-loop} to address mixed-integer \(x\). In both cases, we compare the proposed algorithm to C\&CG and ADR under various instance settings. Unless otherwise stated, each configuration is repeated over 20 independent replications, and the reported curves show the median trajectory together with the 25--75\% interquartile band. All experiments were conducted in Python on a MacBook Pro equipped with an Apple M3 Pro Chip and 18 GB RAM. All instances and code are available online at \url{https://github.com/xuqy2002/SoftSeparationARO}.

As benchmarks, we implement C\&CG of~\cite{ZengZhao2013} to solve the instances to global optimum and the ADR approximation of~\cite{ben2004adjustable}. To compare fairly with C\&CG, in the case of an ellipsoidal uncertainty set, we follow Electronic Companion A-4.1 in~\cite{ZengZhao2013} to solve its subproblems based on the KKT condition of the transportation linear program and the big-\(M\) constants suggested therein; in the case of scenario uncertainty set, we compute \(\max_{\xi \in \Xi}Q(x, \xi)\) through enumeration in C\&CG, which we find significantly faster than computing it through the KKT condition plus an SOS1 constraint among the \(\xi\)-scenarios; and in the case of a budget uncertainty set, we follow Electronic Companion A-4.2 in~\cite{ZengZhao2013} to solve its subproblems based on strong duality and the big-\(M\) constants in~\cite{gabrel2014robust}, as~\cite{ZengZhao2013} suggest. Other implementation details can be found in Appendix~\ref{app:exp}.

\subsection{Continuous Here-and-Now Decisions}
\label{subsec:continuous-first-stage}

We consider the adaptive robust location-transportation problem as in~\cite{ZengZhao2013}. The here-and-now decision allocates continuous capacity across $m$ supply locations at a linear investment cost. After the uncertain demand vector $\xi \in \mathbb R^n$ is realized, a wait-and-see transportation linear program ships goods to $n$ demand locations subject to the installed capacities, with any unfulfilled demand served by a high-cost emergency source. The demand at location $j \in [n]$ is modeled by
\(
d_j(\xi)=\overline d_j+\widehat d_j\xi_j
\), where the primitive uncertainty \(\xi\) belongs with an uncertainty set.

We generate random instances as follows. We set $m=n$ (i.e., equal number of supply and demand nodes), and sample the integer-valued capacity-investment costs, transportation costs, emergency-supply costs, and nominal demands $\overline d_j$ uniformly and independently from the intervals $[10,30]$, $[20,40]$, $[100,200]$, and $[200,300]$, respectively. The demand sensitivity $\widehat d_j$ is a uniform random fraction $[0.1,0.5]$ of $\overline d_j$. We consider three types of uncertainty sets: firstly, an ellipsoidal uncertainty set
\begin{equation}
\Xi
:=
\Big\{
\bar\xi+R\operatorname{diag}(a_1,\ldots,a_n)u: \ 
\|u\|_2\le1
\Big\}, \quad R^TR=\textrm{Id}, \tag{Ellipsoidal} \label{ellipsoidal-uncertainty}
\end{equation}
where $\bar\xi=0.5\cdot\mathbf1$, the semi-axis lengths $a_j$ are sampled
independently from $\mathrm{Unif}[0.10,0.50]$, and $R$ is an
independent random rotation matrix drawn from the Haar
distribution on the rotation group $\mathrm{SO}(n)$; and secondly, a scenario uncertainty set consisting of finite scenarios of \(\xi\), which are drawn uniformly from the boundary of the ellipsoidal uncertainty set. Consequently, these two uncertainty sets produce similar \(F^\star\). Thirdly, a budget uncertainty set is defined through
\begin{equation}
\Xi
:=
\Big\{
\xi \in [\mathbf{0}, \mathbf{1}]: \ \mathbf{1}^{\top}\xi \leq n\Gamma
\Big\}, \tag{Budget} \label{budget-uncertainty}
\end{equation}
where \(n\Gamma\) is the so-called ``budget of uncertainty.''

\subsubsection{Hyperparameter Configuration} \label{subsec:cvg-analysis}
We first examine the convergence of Algorithm~\ref{alg:MH-SGD2} under various choices of \(|\I|\) (the size of the scenario uncertainty set), \(w\) (temperature parameter for the Mellowmax approximation), and \(\nu\) (the proposal distribution on \(\I\)). 
To this end, we evaluate the robust objective function value of each decision incumbent using exact separation, and report the evolution of the ensuing relative optimality gap when compared to the true optimal value. In all evaluations, we use the linear average as the decision incumbent.

\begin{figure}[!t]
    \centering
    \includegraphics[width=0.78\textwidth]{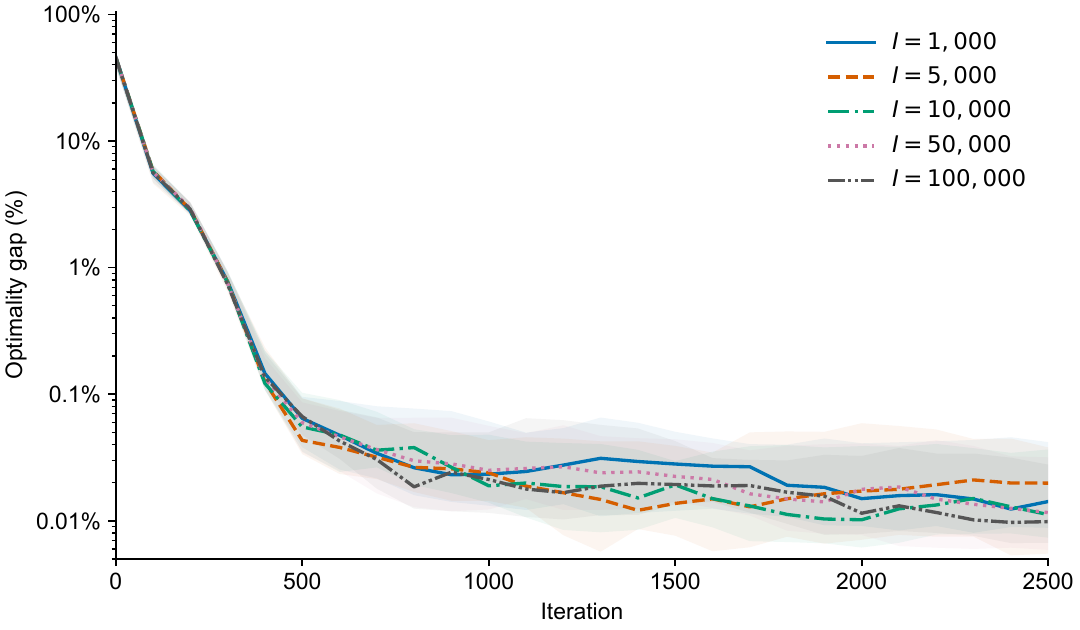}
    \caption{Convergence of Algorithm~\ref{alg:MH-SGD2} under various scenario sizes $|\I|$ with uniform proposal and 
    $w=0.001$. Each selected solution is reevaluated using the exact
    separation. The solid curves show medians over 20 replications, and the
    shaded regions show the 25--75\% interquartile bands.}
    \label{fig:task2-vary-I}
\end{figure}

\paragraph{Convergence under varying scenario sizes.}
We first examine the convergence of Algorithm~\ref{alg:MH-SGD2} with $|\I| \in \{1000,5000,10000,50000,100000\}$, while fixing 
$w=0.001$ and \(\nu\) to be uniform on \(\I\). Figure~\ref{fig:task2-vary-I} reports the
convergence trajectories. We observe that Algorithm~\ref{alg:MH-SGD2} displays broadly similar convergence behavior across all tested
scenario sizes. In each trajectory, the relative optimality gap decreases sharply in
the initial iterations and then stabilizes around the interval \([0.01\%, 0.1\%]\).
In particular, increasing $|\I|$ from $1{,}000$ to $100{,}000$ does not lead to a systematic deterioration in either convergence speed or the attained gap over the iteration horizon considered. This demonstrates that the convergence of Algorithm~\ref{alg:MH-SGD2} has a mild reliance on the scenario size.

\begin{figure}[!t]
    \centering
    \includegraphics[width=0.78\textwidth]{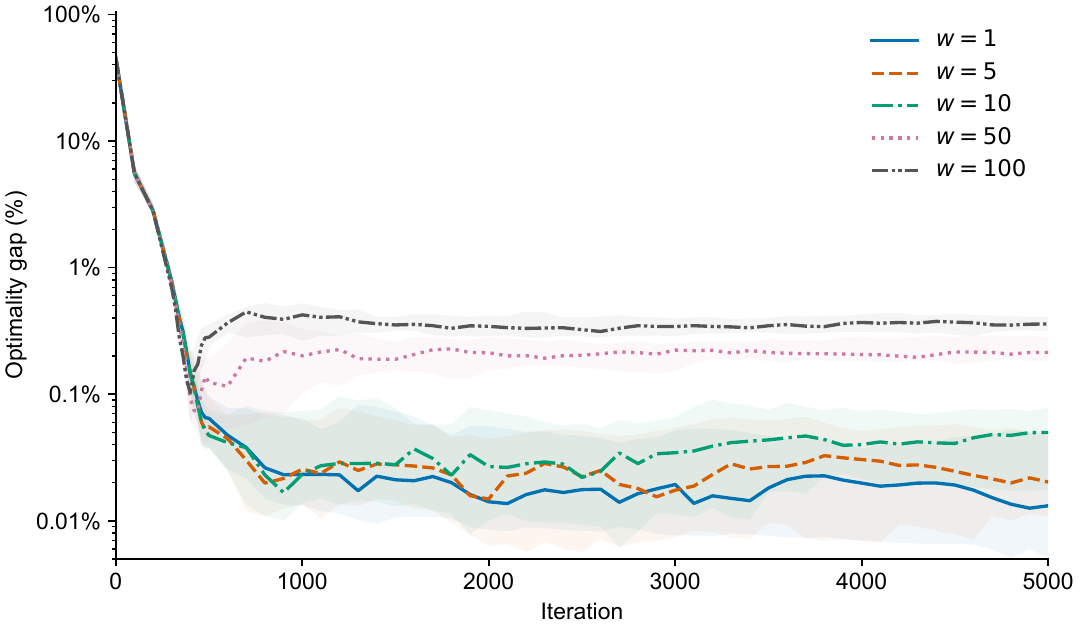}
    \caption{Convergence of Algorithm~\ref{alg:MH-SGD2} under various $w$ with uniform proposal and $|\I|=1{,}000$.
    Each selected solution is reevaluated using the exact separation. The
    solid curves show medians over 20 replications, and the shaded regions show the
    25--75\% interquartile bands.}
    \label{fig:task2-w}
\end{figure}

\paragraph{Sensitivity to the temperature parameter \(w\).}
We next study the convergence with $w$ varying among \(\{1,5,10,50,100\}\), while fixing \(\nu\) to be uniform on \(\I\) and $|\I|=1{,}000$. From Figure~\ref{fig:task2-w}, we observe that the optimality gap displays a phase change, with \(w \in \{50, 100\}\) resulting in significant gaps between 0.1\% and 1\%, and lower temperature such as \(w \leq 10\) producing much smaller gaps in \([0.01\%, 0.1\%]\). In addition, the gap generally improves as the \(w\)-value decreases. This reflects the Mellowmax approximation error in Lemma~\ref{lem:mellowmax-approx}. We note that the error gap therein is uniform in \(x\) and so it makes sense that Figure~\ref{fig:task2-w} displays a much higher accuracy around optimum. In the remaining experiments, we shall fix \(w = 0.001\) for an accurate approximation of~\eqref{main}.


\begin{figure}[!t]
    \centering

    \includegraphics[width=0.48\textwidth]
    {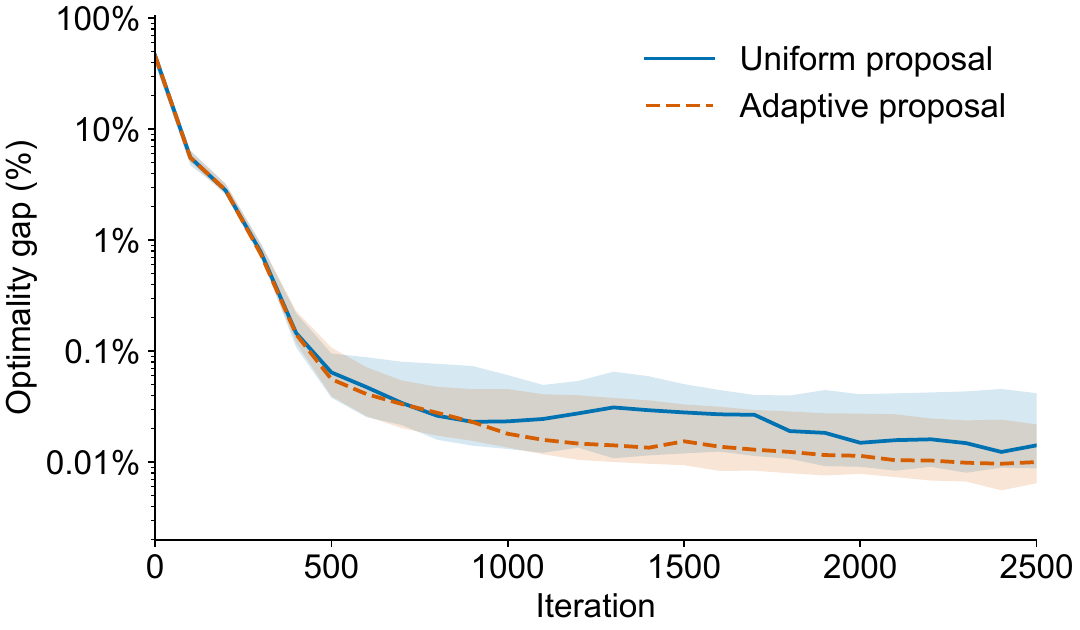}
    \hfill
    \includegraphics[width=0.48\textwidth]
    {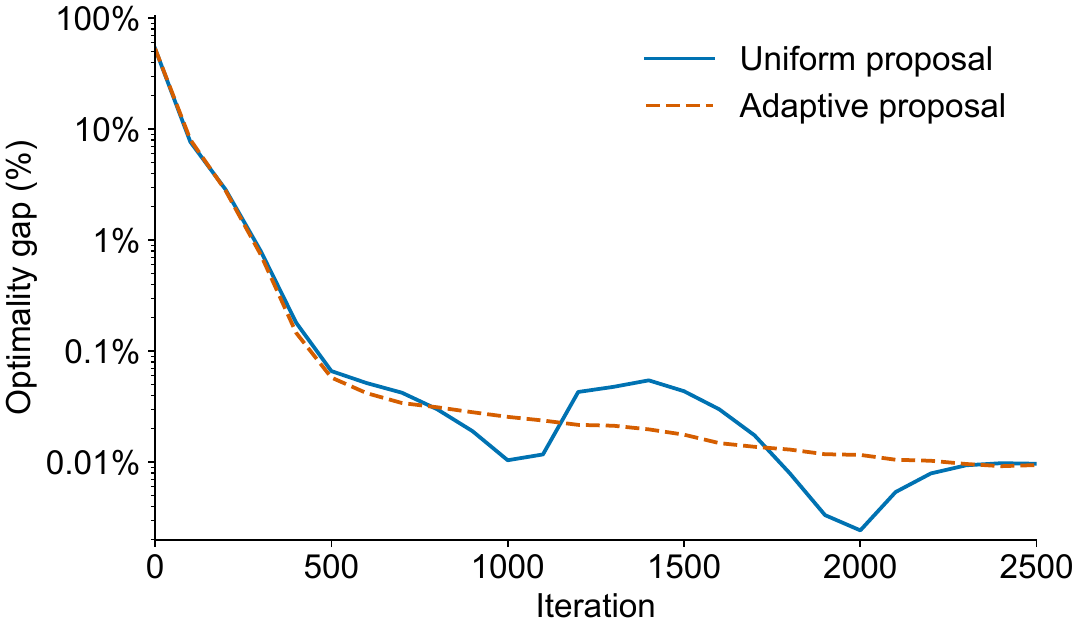}

    \caption{Adaptive proposal versus uniform proposal with
    $w=0.001$ and $|\I|=1{,}000$.
    Each selected solution is reevaluated using the exact separation.
    The left panel reports the median trajectories over 20 independent replications,
    with the shaded regions indicating the 25--75\% interquartile bands.
    The right panel shows the convergence trajectories from one individual replication.}
    \label{fig:task2-adaptive-vs-uniform}
\end{figure}

\paragraph{Adaptive proposal versus uniform proposal.}
\label{adaptive_exp}
Finally, we examine the proposal distribution \(\nu\). In Figure~\ref{fig:task2-adaptive-vs-uniform}--right, we depict the convergence of Algorithm~\ref{alg:MH-SGD2} with the uniform proposal, \(w = 0.001\), and \(|\I|=1,000\). The solid curve displays a decaying trend in the optimality gap but also shows rebounds in later iterations (note that the gap is reported in a logarithmic scale). Intuitively, we can attribute these rebounds to the uniform proposal because, in later iterations, a uniform \(\nu\) ignorant of \(Q(x, \xi)\) may propose less adversarial states which, if accepted, move the decision incumbents away from optimum.

Observing this, we propose an adaptive proposal distribution to account for \(Q(x, \xi)\). To this end, we use the subgradient $-U^\top\pi^\star(x,i)\in \partial_\xi Q(x,\xi^{(i)})$ at the current state \(i \in \I\) to tilt new scenario generation toward a higher \(Q(x, \xi)\) value. Specifically, for the ellipsoidal uncertainty set defined in~\eqref{ellipsoidal-uncertainty}, we draw the primitive random vector \(u\) from the von Mises--Fisher (vMF) distribution on $\mathbb S^{n-1}$, which has a density $f_{\mathrm{vMF}}(u;\mu,\kappa) \propto\exp\!\left(\kappa\mu^\top u\right)$ with respect to the uniform spherical measure. Here, \(\mu := \frac{A^\top (-U^\top\pi^\star(x,i))}{\|A^\top (-U^\top\pi^\star(x,i))\|}\) with \(A := R\text{diag}(a_1, \ldots, a_n)\) is the mean direction and \(\kappa \geq 0\) controls the concentration around \(\mu\). For example, \(\kappa = 0\) reduces vMF to the uniform distribution on \(\mathbb{S}^{n-1}\) and a larger \(\kappa\) places more probability mass near \(\mu\). Then, we propose \(\bar\xi + Au\). For the scenario uncertainty set, we adapt the proposal distribution to be 
\(
q(j|i) \propto \frac{\exp\!\left(\kappa\mu^\top u^{(j)}\right)}
{|\det A|\,\|A^{-\top}u^{(j)}\|}
\). For the budget uncertainty set, the adaptive proposal admits an efficient sampling procedure through dynamic programming, which we detail in Appendix~\ref{app:adaptive-proposal-sampling}.

We compare the adaptive proposal with \(\kappa = 10\) and the uniform proposal in Figure~\ref{fig:task2-adaptive-vs-uniform}. We observe that the adaptive proposal produces a more stable convergence (see the right panel) and converges to a smaller optimality gap (see the left panel). This makes sense because vMF tilts the proposal toward adversarial states and reduces ineffective transitions. It can also be explained by the Poisson resolvent constant \(C_*\). Indeed, the adaptive proposal is closer to the ideal sampling distribution \(p_x\) than the uniform proposal, leading to a smaller \(C_*\) value (see Example~\ref{ex:ratio-aligned}) and a faster convergence (see Theorems~\ref{thm:continuous_convergence} and~\ref{thm:continuous-state-convergence}).

\subsubsection{Computing Time}

We report the computing times (all in wall clock seconds) of solving~\eqref{main} instances through Algorithm~\ref{alg:MH-SGD2} using \(w = 0.001\) and the adaptive proposal distribution.

\begin{table}[!htbp]
\centering
\small
\caption{Runtime comparison in instances with \emph{continuous} here-and-now decisions and ellipsoidal uncertainty sets. Entries are
mean $\pm$ standard deviation in seconds over five independent replications.
}
\label{tab:continuous-ellipsoidal-runtime}
\begin{tabular}{lcccc}
\toprule
\(n\) &  Algorithm~\ref{alg:MH-SGD2} & C\&CG & ADR \\
\midrule
$3$
    & $0.13 \pm 0.06$
    & $0.045 \pm 0.016$
    & $\mathbf{0.029 \pm 0.004}$ \\ 
$10$
    & $\mathbf{0.17 \pm 0.07}$
    & $2.73 \pm 0.43$
    & $1.33 \pm 0.51$ \\
$30$
    & $\mathbf{0.32 \pm 0.10}$
    & $772.78 \pm 376.24$
    & $138.52 \pm 11.39$ \\
$50$
    & $\mathbf{1.07 \pm 0.25} $
    & $>1800$
    & $623.83 \pm 107.04$ \\
$70$
    & $\mathbf{2.07 \pm 0.55 } $
    & $>1800$
    & $>1800$ \\
$100$
   & $\mathbf{3.56 \pm 0.65 } $
    & $>1800$
    & $>1800$ \\
\bottomrule
\end{tabular}
\end{table}

\paragraph{Ellipsoidal uncertainty set.}
We first consider the ellipsoidal uncertainty set~\eqref{ellipsoidal-uncertainty} with \(n\) varying from 3 to 100 and report the computing times in Table~\ref{tab:continuous-ellipsoidal-runtime}. For Algorithm~\ref{alg:MH-SGD2} and C\&CG, we report the time required to attain a relative optimality gap below $0.1\%$ with respect to the true optimal value. For ADR, we report the time required to solve the conservative approximation. We use a time limit of half an hour. We report ``\(>1800\)'' if the computing times in all replications exceed the limit, or otherwise truncate the beyond-limit computing times at 1,800s and then report mean $\pm$ standard deviation in seconds.

From Table~\ref{tab:continuous-ellipsoidal-runtime}, we observe that Algorithm~\ref{alg:MH-SGD2} shows better scalability than C\&CG and ADR as the problem dimension \(n\) increases. In particular, the computing times of Algorithm~\ref{alg:MH-SGD2} grow mildly and remain stable as \(n\) increases. On the contrary, those of C\&CG and ADR exceed the time limit in all replications once \(n\) reaches \(70\). This demonstrates the scalability of the proposed soft separation approach with problem dimensions.


\begin{table}[!htbp]
\centering
\caption{Runtime comparison in instances with \emph{continuous} here-and-now decisions and scenario uncertainty sets. Entries are
mean $\pm$ standard deviation in seconds over five independent replications. }
\label{tab:scenario-based-runtime}
\resizebox{\textwidth}{!}{
\begin{tabular}{lccc@{\hspace{0.7cm}}ccc@{\hspace{0.7cm}}ccc}
\toprule
& \multicolumn{3}{c}{$n=3$}
& \multicolumn{3}{c}{$n=30$}
& \multicolumn{3}{c}{$n=100$}
\\
\cmidrule(lr){2-4}
\cmidrule(lr){5-7}
\cmidrule(lr){8-10}
$|\I|$
& Algorithm~\ref{alg:MH-SGD2} & C\&CG & ADR
& Algorithm~\ref{alg:MH-SGD2} & C\&CG & ADR
& Algorithm~\ref{alg:MH-SGD2} & C\&CG & ADR
\\
\midrule
$1{,}000$
& $0.15 \pm 0.04$
& $0.17 \pm 0.03$
& $\mathbf{0.14 \pm 0.08}$
& $\mathbf{0.74 \pm 0.19}$
& $1.07 \pm 0.08$
& $>1800$
& $\mathbf{6.71 \pm 2.25}$
& $9.54 \pm 0.76  $
& $>1800$
\\

$5{,}000$
& $\mathbf{0.17 \pm 0.06}$
& $0.81 \pm 0.32$
& $2.52 \pm 0.14$
& $\mathbf{0.57 \pm 0.23}$
& $5.03 \pm 0.43$
& $>1800$
& $\mathbf{7.87 \pm 0.41}$
& $48.94 \pm 3.45 $
& $>1800$
\\

$10{,}000$
& $\mathbf{0.21 \pm 0.08}$
& $1.46 \pm 0.52$
& $4.85 \pm 0.80$
& $\mathbf{0.67 \pm 0.10 }$
& $9.92 \pm 0.61 $
& $>1800$
& $\mathbf{8.01 \pm 0.93}$
& $94.45 \pm 8.39$
& $>1800$
\\

$50{,}000$
& $\mathbf{0.37 \pm 0.14}$
& $5.46 \pm 1.93$
& $13.42 \pm 0.45$
& $\mathbf{1.02 \pm 0.32 }$
& $49.44 \pm 3.77 $
& $>1800$
& $\mathbf{7.72 \pm 0.27}$
& $ 433.20 \pm 32.83 $
& $>1800$
\\

$100{,}000$
& $\mathbf{0.58 \pm 0.21}$
& $12.58 \pm 5.35$
& $54.43 \pm 11.83$
& $\mathbf{1.26 \pm 0.25 }$
& $101.95 \pm 5.93 $
& $>1800$
& $\mathbf{8.29 \pm 0.29}$
& $844.09 \pm 94.49$
& $>1800$
\\
\bottomrule
\end{tabular}
}
\end{table}

\paragraph{Scenario uncertainty set.}
We next consider the scenario uncertainty set $\Xi := \{\xi^1,\ldots,\xi^{|\I|}\}$ with \(|\I|\) varying from 1,000 to 100,000 and \(n\) varying from 3 to 100. From Table~\ref{tab:scenario-based-runtime}, we observe that the runtime of Algorithm~\ref{alg:MH-SGD2} increases only moderately in \(|\I|\), reflecting the uniform error bound in Lemma~\ref{lem:mellowmax-approx} and the convergence rate in Theorem~\ref{thm:continuous_convergence}. In addition, the increase of Algorithm~\ref{alg:MH-SGD2} runtime in \(n\) is also significantly milder than those of C\&CG and ADR. In particular, ADR cannot solve any instance/replication within the time limit once \(n\) reaches 30. This observation supports the computational advantage of the soft separator: it avoids the scenario-wise scaling bottleneck, which limits C\&CG and ADR in higher-dimensional instances.




\begin{table*}[!t]
\centering
\small
\setlength{\tabcolsep}{3.5pt}
\caption{Runtime comparison for instances with \emph{continuous} here-and-now
decisions and budget uncertainty sets. Times are reported as mean $\pm$
standard deviation in seconds over five independent replications.}
\label{tab:continuous-budget-runtime}

\resizebox{\textwidth}{!}{%
\begin{tabular}{
ccccc
@{\hspace{2em}}
ccccc
}
\cmidrule(lr){1-5}
\cmidrule(lr){6-10}
$n$ & $\Gamma$ & Algorithm~\ref{alg:MH-SGD2} & C\&CG & ADR
&
$n$ & $\Gamma$ & Algorithm~\ref{alg:MH-SGD2} & C\&CG & ADR
\\
\midrule

30 & 0.1 & $1.46 \pm 0.25$
          & $\mathbf{1.13 \pm 0.61}$
          & $3.94 \pm 0.40$
&
70 & 0.1 & $\mathbf{17.79 \pm 2.35}$
          & $>1800$
          & $473.61 \pm 65.69$
\\

& 0.3 & $\mathbf{1.56 \pm 0.37}$
      & $18.54 \pm 16.68$
      & $7.78 \pm 1.94$
&
& 0.3 & $\mathbf{10.88 \pm 0.55}$
      & $>1800$
      & $768.04 \pm 188.41$
\\

& 0.5 & $\mathbf{2.33 \pm 1.15}$
      & $10.54 \pm 10.25$
      & $13.67 \pm 5.98$
&
& 0.5 & $\mathbf{8.02 \pm 2.35}$
      & $>1800$
      & $881.43 \pm 209.58$
\\

& 0.7 & $\mathbf{2.22 \pm 0.36}$
      & $2.74 \pm 2.92$
      & $14.31 \pm 6.08$
&
& 0.7 & $\mathbf{10.80 \pm 1.37}$
      & $>1800$
      & $1027.86 \pm 318.44$
\\

& 0.9 & $1.90 \pm 0.55$
      & $\mathbf{0.20 \pm 0.04}$
      & $16.63 \pm 4.97$
&
& 0.9 & $\mathbf{23.49 \pm 5.42}$
      & $63.93 \pm 54.06$
      & $1319.22 \pm 573.24$
\\

\midrule

50 & 0.1 & $\mathbf{8.51 \pm 1.54}$
          & $37.33 \pm 16.57$
          & $74.58 \pm 15.31$
&
100 & 0.1 & $\mathbf{38.79 \pm 7.96}$
          & $>1800$
          & $>1800$
\\

& 0.3 & $\mathbf{6.04 \pm 0.76}$
      & $>1800$
      & $247.19 \pm 145.87$
&
& 0.3 & $\mathbf{17.62 \pm 1.09}$
      & $>1800$
      & $>1800$
\\

& 0.5 & $\mathbf{9.61 \pm 0.73}$
      & $>1800$
      & $175.90 \pm 24.68$
&
& 0.5 & $\mathbf{27.39 \pm 3.40}$
      & $>1800$
      & $>1800$
\\

& 0.7 & $\mathbf{4.88 \pm 0.91}$
      & $138.00 \pm 127.46$
      & $152.43 \pm 52.32$
&
& 0.7 & $\mathbf{16.02 \pm 4.01}$
      & $>1800$
      & $>1800$
\\

& 0.9 & $4.70 \pm 0.87$
      & $\mathbf{2.51 \pm 1.21}$
      & $124.11 \pm 45.91$
&
& 0.9 & $\mathbf{38.86 \pm 2.74}$
      & $>1800$
      & $>1800$
\\

\bottomrule
\end{tabular}%
}
\end{table*}

\paragraph{Budget uncertainty set.} Finally, we consider the budget uncertainty set~\eqref{budget-uncertainty} with \(n\) varying from \(30\) to \(100\) and \(\Gamma\) varying between \(0.1\) and \(0.9\). From Table~\ref{tab:continuous-budget-runtime}, we observe that C\&CG performs the best when \(n\) is small and \(\Gamma\) is near-binary (e.g., when \(n = 30, \Gamma = 0.9\)). As \(n\) increases and \(\Gamma\) moves away from binary values, Algorithm~\ref{alg:MH-SGD2} outperforms C\&CG and ADR. For example, neither C\&CG nor ADR solves any instance or replication within the time limit once \(n\) reaches 100, while Algorithm~\ref{alg:MH-SGD2} solves all these instances. Similarly, the Algorithm~\ref{alg:MH-SGD2} runtime remains more stable than that of C\&CG as \(\Gamma\) varies. For example, the C\&CG runtime with \(\Gamma = 0.5\) (when the budget uncertainty set has more extreme points) is significantly longer than that with \(\Gamma = 0.1, 0.9\) (when the number of extreme points is less).

\subsection{Mixed-integer Here-and-Now Decisions}
\label{subsec:integer-first-stage}

We now test instances with integrality restrictions on the here-and-now decision variables. To this end, we add facility-opening decisions \(y_i \in \{0, 1\}\), each incurring a setup cost $f_i$, to the adaptive
robust location-transportation problem. We further impose constraints \(\text{capacity}_i \leq K_i y_i\) for all \(i \in [m]\), which ensures that capacities can only be installed at opened facilities. We follow the same approach to generate instances as in Section~\ref{subsec:continuous-first-stage} and we sample \(f_i\) and \(K_i\) uniformly from the integer ranges  \([300,400]\) and \([800,1200]\), respectively.

We report the computing times (all in wall clock seconds) of solving~\eqref{main} instances through Algorithm~\ref{alg:outer-loop} using \(w = 1\) and the uniform proposal distribution. When implementing Algorithm~\ref{alg:outer-loop}, we compute the true objective value of the solution \({x}^\star\) returned by the branch-and-cut method. Across all tested instances,
each ensuing objective value is within $0.1\%$ of the
true optimal value.

\begin{table}[!t]
\centering
\small
\caption{Runtime comparison in instances with \emph{mixed-integer} here-and-now decisions and ellipsoidal uncertainty set. Entries are
mean $\pm$ standard deviation in seconds over five independent replications.}
\label{tab:discrete-ellipsoidal-runtime}
\begin{tabular}{lccc}
\toprule
\(n\) & Algorithm~\ref{alg:outer-loop} & C\&CG & ADR \\
\midrule
$3$
& $0.17 \pm 0.02$
& $\mathbf{0.07 \pm 0.04}$
& $0.23 \pm 0.03$ \\

$10$
& $\mathbf{0.88 \pm 0.35}$
& $1.46 \pm 0.70$
& $41.95 \pm 5.16$ \\

$30$
& $\mathbf{3.05\pm0.37}$
& $1390.24 \pm 572.77$
& $>1800$ \\
\bottomrule
\end{tabular}
\end{table}

\paragraph{Ellipsoidal uncertainty set.} We report the computing times with ellipsoidal uncertainty sets in Table~\ref{tab:discrete-ellipsoidal-runtime}, where \(n \in \{3, 10, 30\}\). From this table, we observe that Algorithm~\ref{alg:outer-loop} demonstrates better scalability than C\&CG and ADR as \(n\) increases. For example, the average runtime of Algorithm~\ref{alg:outer-loop} becomes at least an order of magnitude shorter than those of C\&CG and ADR when \(n\) reaches 30. This demonstrates that, although we need to maintain multiple Markov chains in a branch-and-cut tree, the soft separation still manages to identify adversarial scenarios at a relatively lower computational burden.

\begin{table}[!t]
\centering
\caption{Runtime comparison in instances with \emph{mixed-integer} here-and-now decisions and
scenario uncertainty set. Entries are mean $\pm$ standard deviation in seconds over
five independent replications.}
\label{tab:integer-scenario-runtime}
\resizebox{\textwidth}{!}{
\begin{tabular}{lccc@{\hspace{0.05cm}}ccc@{\hspace{0.05cm}}ccc}
\toprule
& \multicolumn{3}{c}{$n=3$}
& \multicolumn{3}{c}{$n=10$}
& \multicolumn{3}{c}{$n=30$}
\\
\cmidrule(lr){2-4}
\cmidrule(lr){5-7}
\cmidrule(lr){8-10}
$|\I|$
& Algorithm~\ref{alg:outer-loop} & C\&CG & ADR
& Algorithm~\ref{alg:outer-loop} & C\&CG & ADR
& Algorithm~\ref{alg:outer-loop} & C\&CG & ADR
\\
\midrule
$1{,}000$
& $0.32 \pm 0.12$
& $\mathbf{0.18 \pm 0.08}$
& $1.38 \pm 0.55$
& $\mathbf{1.47 \pm 0.45}$
& $1.74 \pm 0.71$
& $373.22 \pm 90.40$
& $3.60 \pm 0.25$
& $\mathbf{2.85\pm0.48}$
& $>1800$
\\

$5{,}000$
& $\mathbf{0.37 \pm 0.15}$
& $0.64 \pm 0.31$
& $8.88 \pm 2.37$
& $\mathbf{1.44 \pm 0.48}$
& $5.01 \pm 1.54$
& $1397.60 \pm 594.33$
& $\mathbf{3.64 \pm 0.51}$
& $12.63\pm0.65$
& $>1800$
\\

$10{,}000$
& $\mathbf{0.35 \pm 0.18}$
& $1.37 \pm 0.69$
& $25.72 \pm 7.85$
& $\mathbf{1.37 \pm 0.44}$
& $11.75 \pm 6.75$
& $>1800$
& $\mathbf{3.58 \pm 0.38}$
& $24.88\pm1.41$
& $>1800$
\\

$50{,}000$
& $\mathbf{0.33 \pm 0.15}$
& $6.58 \pm 3.44$
& $118.33 \pm 28.03$
& $\mathbf{2.61 \pm 0.60}$
& $64.59\pm 28.64$
& $>1800$
& $\mathbf{3.84 \pm 0.42}$
& $151.00\pm38.73$
& $>1800$
\\

$100{,}000$
& $\mathbf{0.42 \pm 0.17}$
& $14.49 \pm 6.41$
& $256.19 \pm 99.19$
& $\mathbf{1.98 \pm 0.56}$
& $82.62 \pm 17.53$
& $>1800$
& $\mathbf{3.82\pm0.45 }$
& $238.43\pm12.47 $
& $>1800$
\\
\bottomrule
\end{tabular}
}
\end{table}

\paragraph{Scenario uncertainty set.} 
We report the computing times with scenario uncertainty sets in Table~\ref{tab:integer-scenario-runtime}, where \(n \in \{3, 10, 30\}\) and \(|\I|\) varies from \(1,000\) to \(100,000\). This table
shows that Algorithm~\ref{alg:outer-loop} scales favorably
with both \(n\) and \(|\I|\). Indeed, for each fixed \(n\), the runtime of Algorithm~\ref{alg:outer-loop} remains nearly constant as \(|\I|\) increases. In addition, when \(n = 30\), the ADR runtime exceeds the time limit in all replications, while Algorithm~\ref{alg:outer-loop} is able to solve all instances within the time limit.
\begin{table*}[!t]
\centering
\small
\setlength{\tabcolsep}{3.5pt}
\caption{Runtime comparison for instances with mixed-integer here-and-now
decisions and budget uncertainty. Times are reported as mean $\pm$
standard deviation in seconds over five independent replications.}
\label{tab:integer-budget-runtime}

\resizebox{\textwidth}{!}{%
\begin{tabular}{
ccccc
@{\hspace{2em}}
ccccc
}
\cmidrule(lr){1-5}
\cmidrule(lr){6-10}
$n$ & $\Gamma$ & Algorithm~\ref{alg:outer-loop} & C\&CG & ADR
&
$n$ & $\Gamma$ & Algorithm~\ref{alg:outer-loop} & C\&CG & ADR
\\
\midrule

30 & 0.1 & $4.73 \pm 1.25$
          & $\mathbf{0.28 \pm 0.05}$
          & $24.84 \pm 12.95$
&
70 & 0.1 & $\mathbf{38.97 \pm 6.40}$
          & $889.39 \pm 313.63$
          & $>1800$
\\

& 0.3 & $\mathbf{4.41 \pm 0.84}$
      & $5.45 \pm 3.19$
      & $59.31 \pm 25.07$
&
& 0.3 & $\mathbf{40.35 \pm 8.64}$
      & $>1800$
      & $>1800$
\\

& 0.5 & $\mathbf{3.73 \pm 0.31}$
      & $5.09 \pm 3.41$
      & $92.74 \pm 20.12$
&
& 0.5 & $\mathbf{35.39 \pm 2.47}$
      & $>1800$
      & $>1800$
\\

& 0.7 & $3.86 \pm 0.40$
      & $\mathbf{0.93 \pm 1.23}$
      & $113.54 \pm 38.49$
&
& 0.7 & $\mathbf{36.57 \pm 3.00}$
      & $449.30 \pm 473.53$
      & $>1800$
\\

& 0.9 & $4.03 \pm 0.65$
      & $\mathbf{0.19 \pm 0.03}$
      & $104.89 \pm 31.09$
&
& 0.9 & $\mathbf{43.07 \pm 9.19}$
      & $254.14 \pm 237.96$
      & $>1800$
\\

\midrule

50 & 0.1 & $24.58 \pm 13.12$
          & $\mathbf{17.37 \pm 7.83}$
          & $1364.19 \pm 631.61$
&
100 & 0.1 & $\mathbf{42.96 \pm 5.80}$
           & $>1800$
           & $>1800$
\\

& 0.3 & $\mathbf{22.72 \pm 15.61}$
      & $1515.93 \pm 686.48$
      & $1469.35 \pm 747.72$
&
& 0.3 & $\mathbf{42.93\pm 3.89}$
      & $>1800$
      & $>1800$
\\

& 0.5 & $\mathbf{19.10 \pm 4.07}$
      & $1307.20 \pm 775.33$
      & $>1800$
&
& 0.5 & $\mathbf{46.88 \pm 6.73}$
      & $>1800$
      & $>1800$
\\

& 0.7 & $\mathbf{17.01 \pm 7.43}$
      & $88.60 \pm 100.68$
      & $>1800$
&
& 0.7 & $\mathbf{46.26 \pm 4.87}$
      & $1469.80 \pm 795.10$
      & $>1800$
\\

& 0.9 & $14.76 \pm 2.50$
      & $\mathbf{1.32 \pm 0.84}$
      & $1316.66 \pm 667.30$
&
& 0.9 & $\mathbf{45.15 \pm 6.00}$
      & $1076.80 \pm 548.60$
      & $>1800$
\\

\bottomrule
\end{tabular}%
}
\end{table*}
\paragraph{Budget uncertainty set.} Finally, we report the computing times with the budget uncertainty set in Table~\ref{tab:integer-budget-runtime}, where \(n\) varies from 30 to 100 and \(\Gamma\) varies between 0.1 and 0.9. From this table, we observe that C\&CG performs the best when \(n\) is small and \(\Gamma\) is close to binary (e.g., when \(n=30, \Gamma = 0.1\)), and Algorithm~\ref{alg:outer-loop} outperforms C\&CG and ADR as \(n\) increases, especially when \(n = 100\). In addition, Algorithm~\ref{alg:outer-loop} shows better scalability with \(\Gamma\) than C\&CG and ADR. This confirms our earlier observation from Table~\ref{tab:continuous-budget-runtime}.  

\section{Conclusion} \label{sec:conclusion}
This paper studied alternative algorithms for solving adaptive robust optimization (\ref{main}) based on soft, rather than exact, separation. Soft separation adapts a time-inhomogeneous Markov chain to optimization iterates and generates high-confidence adversarial scenarios for~\ref{main}. We applied the soft separation to solve~\eqref{main} with both continuous and mixed-integer here-and-now decisions. In the continuous setting, we established polynomial-time convergence of a projected stochastic subgradient descent algorithm driven by soft separation; and in the mixed-integer setting, we derived a high-probability certificate of global optimality for a branch-and-cut method incorporating soft separation. Numerical results demonstrated the effectiveness and scalability of the proposed methods, particularly as the problem dimension and number of scenarios increase.




\newpage
\bibliographystyle{informs2014} 
\bibliography{references} 

\newpage

\renewcommand{\thesection}{EC.\arabic{section}}
\renewcommand{\theHsection}{EC.\arabic{section}}

\renewcommand{\thesubsection}{\thesection.\arabic{subsection}}
\renewcommand{\theHsubsection}{EC.\arabic{section}.\arabic{subsection}}

\renewcommand{\thesubsubsection}{\thesubsection.\arabic{subsubsection}}
\renewcommand{\theHsubsubsection}
{EC.\arabic{section}.\arabic{subsection}.\arabic{subsubsection}}

\ECSwitch
\ECHead{Supplementary Results}
\section{Extension to the General Conic~\eqref{main}}
\label{app:conic}
This section extends the analysis in Section~\ref{sec:2} to a general cone \(\mathcal K\) in~\eqref{main}. The main difference is that relatively complete recourse alone no longer
guarantees strong duality and a uniform bound of the dual optimal solutions. We retain
Assumptions~\ref{assump:5}--\ref{assump:1} and replace Assumption~\ref{assump:linear-recourse} with the following conic regularity
condition to prove counterparts of Lemmas~\ref{lem:recourse_lipschitz_smoothing} and~\ref{prop:L-smooth}.

\begin{assumption}[Conic recourse regularity]
\label{ass:conic-regularity}
The set $\mathcal K\subseteq\mathbb R^m$ is a nonempty closed convex cone with
nonempty interior.  For every $(x,\xi)\in\mathcal X\times\Xi$, there exists a
point $\bar y=\bar y(x,\xi)$ such that
\begin{equation}
  G\bar y+Ex+U\xi-h\in\operatorname{int}\mathcal K.
  \label{eq:conic-slater}
\end{equation}
\end{assumption}
Define
\(
  t(x,\xi):=h-Ex-U\xi
\), 
\(
  \Pi_{\mathcal K}:=
  \{\pi\in\mathcal K^\star:G^\top\pi=b\},
\)
where
$\mathcal K^\star:=\{\pi\in\mathbb R^m:\pi^\top z\geq 0
\ \text{for all }z\in\mathcal K\}$ is the dual cone.  By conic strong
duality and Assumption~\ref{assump:1},~\eqref{eq:conic-slater} implies that
\begin{equation}
  Q(x,\xi)
  =\max_{\pi\in\Pi_{\mathcal K}}\pi^\top t(x,\xi),
  \qquad \forall (x,\xi)\in\mathcal X\times\Xi,
  \label{eq:conic-recourse-dual}
\end{equation}
and the maximum is attained~\citep[see, e.g.,][]{ben2001lectures}. The 
regularity properties of \(Q(x, \xi)\) follow.

\begin{lemma}[Conic counterparts of Lemma~\ref{lem:recourse_lipschitz_smoothing}]
\label{lem:conic-moreau}
Let $\pi^\star(x,i)$ denote the minimum-norm maximizer of~\eqref{eq:conic-recourse-dual} at $(x,\xi^{(i)})$. Then,
\(
-E^\top\pi^\star(x,i)\in\partial_x\mathcal Q(x,\xi^{(i)}).
\)
Furthermore, there exists a constant $L_Q>0$ such that, with
$L_Q':=L_Q\|E\|$ and $L_\xi':=L_Q\|U\|$, the following hold: 

(1) Lipschitz continuity in $x$:
For every $\xi\in\Xi$, $\mathcal Q(\cdot,\xi)$ is $L_Q'$-Lipschitz continuous on $\mathcal X$;

(2) Lipschitz continuity in $\xi$:
For every $x\in\mathcal X$, $\mathcal Q(x,\cdot)$ is $L_\xi'$-Lipschitz continuous on $\Xi$;

(3) Bounded subgradient selection:
The selected dual optimizer is Borel measurable and satisfies $\|E^\top\pi^\star(x,i)\|\le L_Q'$ for every $x\in\mathcal X$ and every scenario $\xi^{(i)}$.
\end{lemma}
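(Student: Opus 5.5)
The plan is to work directly from the dual representation \eqref{eq:conic-recourse-dual}, which Assumption~\ref{ass:conic-regularity} together with Assumption~\ref{assump:1} guarantees with attainment. The crucial quantitative ingredient is a uniform bound on the dual optimal solutions. Once that is available, all three claims follow by standard max-of-affine arguments, exactly as in the linear case of Lemma~\ref{lem:recourse_lipschitz_smoothing}.

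\textbf{Step 1: subgradient inclusion.} For fixed \(i\), the map \(x\mapsto Q(x,\xi^{(i)})=\max_{\pi\in\Pi_{\mathcal K}}\pi^\top(h-Ex-U\xi^{(i)})\) is a pointwise maximum of affine functions of \(x\), hence convex. For any maximizer \(\pi^\star(x,i)\) and any \(x'\in\mathcal X\),
\[
Q(x',\xi^{(i)})\ \ge\ \pi^\star(x,i)^\top t(x',\xi^{(i)})\ =\ Q(x,\xi^{(i)})+\bigl(-E^\top\pi^\star(x,i)\bigr)^\top(x'-x).
\]
This gives \(-E^\top\pi^\star(x,i)\in\partial_xQ(x,\xi^{(i)})\).

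The minimum-norm maximizer is well defined because the optimal face \(\{\pi\in\Pi_{\mathcal K}:\pi^\top t=Q\}\) is nonempty, closed and convex, so it has a unique minimum-norm element. Measurability would follow from measurable selection arguments (e.g., Kuratowski--Ryll-Nardzewski). Alternatively, it follows from the fact that the optimal-set correspondence is closed-valued and upper semicontinuous in \((x,\xi)\), so the minimum-norm selection is Borel.

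\textbf{Step 2: uniform dual bound (the main obstacle).} I want \(L_Q:=\sup_{(x,\xi)\in\mathcal X\times\Xi}\|\pi^\star(x,\xi)\|<\infty\). The standard Slater argument goes as follows. Let \(\bar y\) be the Slater point, so that \(s:=G\bar y+Ex+U\xi-h\in\operatorname{int}\mathcal K\), with a ball \(s+r\mathbb B^m\subseteq\mathcal K\). For any optimal \(\pi\in\mathcal K^\star\),
\[
Q(x,\xi)=\pi^\top t=b^\top\bar y-\pi^\top s\ \le\ b^\top\bar y-r\|\pi\|,
\]
using \(\pi^\top(s-r\pi/\|\pi\|)\ge0\). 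Hence \(\|\pi\|\le (b^\top\bar y-Q(x,\xi))/r\).

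To make this uniform, I would need the Slater radius \(r\) bounded below and \(b^\top\bar y\) bounded above over \(\mathcal X\times\Xi\). My plan is to argue by compactness: assume \(\Xi\) is compact, or take its closure on which Assumption~\ref{ass:conic-regularity} still holds. Interiority is an open condition, so each Slater point \(\bar y(x_0,\xi_0)\) works with radius at least \(r_0/2\) on a neighborhood of \((x_0,\xi_0)\). A finite subcover then yields uniform constants, with \(|Q|\) bounded by continuity on the compact set. This is where the proof is most delicate; if compactness of \(\Xi\) is not available, I would state the bound as part of the assumption's intended reading.

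\textbf{Step 3: Lipschitz claims.} With \(\|\pi^\star\|\le L_Q\) for all optimal duals, item (3) follows immediately from \(\|E^\top\pi^\star\|\le\|E\|L_Q=L_Q'\).

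For item (1), I would take maximizers \(\pi,\pi'\) at \(x\) and \(x'\) respectively. Then
\[
Q(x,\xi)-Q(x',\xi)\ \le\ \pi^\top(t(x,\xi)-t(x',\xi))\ =\ -\pi^\top E(x-x')\ \le\ L_Q\|E\|\|x-x'\|,
\]
and swapping the roles of \(x\) and \(x'\) gives the reverse inequality. Item (2) follows identically with \(U\) in place of \(E\), yielding the constant \(L_\xi'=L_Q\|U\|\).
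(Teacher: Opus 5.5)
Your proof is correct. It differs from the paper's only in how the uniform dual bound is obtained.

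The paper passes to the right-hand-side value function $\vartheta(t):=\inf_y\{b^\top y: Gy-t\in\mathcal K\}$. It uses weak duality to show that every optimal dual at $t(x,\xi^{(i)})$ is a subgradient of $\vartheta$ there, and notes that the Slater condition puts each such $t$ in the interior of $\operatorname{dom}\vartheta$. It then cites boundedness of the subdifferential of a convex function on compact subsets of that interior, applied to the finitely many compact sets $\{t(x,\xi^{(i)}):x\in\mathcal X\}$.

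You instead derive the explicit estimate $\|\pi\|\le (b^\top\bar y-Q(x,\xi))/r$ from the Slater ball and make it uniform by a finite subcover. This is exactly the standard proof of local boundedness of $\partial\vartheta$, specialized to this problem: the Slater point gives $\vartheta\le b^\top\bar y$ on the ball of radius $r$ around $t$. So the two routes share one mechanism. Yours is self-contained and yields a constant in terms of Slater data; the paper's is shorter because it cites the textbook result.

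Three small remarks:
\begin{itemize}
\item In this section $\Xi$ is a finite scenario set, so compactness of $\mathcal X\times\Xi$ holds outright. Your hedge about passing to the closure of $\Xi$ is unnecessary, and it would not be safe in general, since the Slater condition can fail at limit points.
\item The lower bound on $Q$ that your estimate needs follows most simply from weak duality with any fixed $\pi_0\in\Pi_{\mathcal K}$, namely $Q(x,\xi)\ge\pi_0^\top t(x,\xi)$, which is bounded below on the compact set. You do not need to argue via continuity of $Q$.
\item Your measurability sketch is acceptable: the minimum-norm maximizer is the projection of the origin onto a closed convex optimal set that varies measurably. It goes beyond the paper, whose conic proof asserts measurability in item (3) without argument.
\end{itemize}
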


\begin{proof}{Proof of Lemma~\ref{lem:conic-moreau}:}
First, dual feasibility and optimality of $\pi^\star(x,i)$ give, for every $x'\in\mathcal X$, 
\(
Q(x',\xi^{(i)}) \ge \pi^\star (x,i)^\top (h-Ex'-U\xi^{(i)} ) = Q(x,\xi^{(i)})
+(-E^\top\pi^\star(x,i))^{\top}(x'-x)
\), 
which proves \(
-E^\top\pi^\star(x,i)\in\partial_x\mathcal Q(x,\xi^{(i)})
\). 
It remains to establish a bound that is uniform over
$\mathcal X\times\Xi$.  Consider the right-hand-side value function
\(
  \vartheta(t):=\inf_y\{b^\top y:Gy-t\in\mathcal K\}.
\) 
For every $t(x,\xi^{(i)})$ generated by
$x\in\mathcal X$, Assumption~\ref{ass:conic-regularity} places $t$ in the
interior of the domain on which $\vartheta$ is finite. Hence, $\vartheta$ is
locally Lipschitz at these right-hand sides~\citep[see][]{luan2022two}. Because $\mathcal X$ is compact and $\Xi$ is finite,
the sets
\(
  \{t(x,\xi^{(i)}):x\in\mathcal X\}
\) with \(i \in \I\) 
form a finite collection of compact sets.  Moreover, since the dual feasible set does not depend on the right-hand side, $\pi^\star(x,i)$ remains dual feasible at $t'$.
Weak duality and optimality at $t(x,\xi^{(i)})$ therefore give
{\footnotesize
\[
\vartheta(t') \ge \pi^\star(x,i)^\top t' =\pi^\star(x,i)^\top t(x,\xi^{(i)})+\pi^\star(x,i)^\top\bigl(t'-t(x,\xi^{(i)})\bigr) =\vartheta(t(x,\xi^{(i)})) +\pi^\star(x,i)^\top\bigl(t'-t(x,\xi^{(i)})\bigr),
\]
}
where the last equality follows from strong duality. 
It follows that \(\pi^\star(x,i) \in \partial\vartheta(t(x,\xi^{(i)}))\).

Local boundedness of the
subdifferential of a finite convex function therefore gives a common constant
$L_Q<\infty$ such that every dual optimizer $\pi^\star(x,i)$ in
\eqref{eq:conic-recourse-dual} satisfies
$\lVert\pi^\star(x,i)\rVert\leq L_Q$.  Setting
$L'_Q:=L_Q\| E \|$ and $L_\xi':=L_Q\|U\|$ gives $\|E^\top\pi^\star(x,i)\| \le L_Q'$ and the Lipschitz continuity in claims (1)--(2). \hfill \(\Box\)
\end{proof}

Lemma~\ref{lem:conic-moreau} supplies exactly the properties used
in the convergence analysis of Section~\ref{sec:2}.  In
particular, with
\(
Z_x(i):=g_f(x)-E^\top\pi^\star(x,i), \ z_w(x):=\sum_{i\in\I}p_x(i)Z_x(i)\in\partial F_w(x),
\)
we have $\|Z_x(i)\|\le L_f+L_Q', \|z_w(x)\|\le L_f+L_Q'.$ Consequently, Proposition~\ref{prop:poisson-noV} remains valid and the convergence analysis of Algorithm~\ref{alg:MH-SGD2} carries over without further changes.

\section{Adaptive Proposal for the Budget Uncertainty Set}
\label{app:adaptive-proposal-sampling}

We detail the exact samplers for the budget uncertainty sets in Section~\ref{subsec:continuous-first-stage}. 
For budget $\Gamma$, each extreme point of the budget-saturating face (which suffices for
our purposes since \(Q(x,\xi)\) is componentwise nondecreasing in \(\xi\)) has $\lfloor n \Gamma \rfloor$ coordinates equal to one and one
coordinate equal to $\Gamma-\lfloor n\Gamma \rfloor \in [0,1)$. Define
\[
r_\Gamma^2=\lfloor n\Gamma\rfloor+(n\Gamma-\lfloor n\Gamma\rfloor)^2-\frac{(n\Gamma)^2}{n}, \quad \mathrm{index}_x(i)
=
\frac{\kappa\,\left(\mathrm{Id}-\frac{1}{n}\mathbf 1\mathbf 1^\top\right)\left(-U^\top\pi^\star(x,i)\right)}
     {r_\Gamma\|\left(\mathrm{Id}-\frac{1}{n}\mathbf 1\mathbf 1^\top\right)\left(-U^\top\pi^\star(x,i)\right)\|}.
\]
The proposal distribution over the extreme points is $q_x^{\mathrm{budget}}(\xi'\mid\xi^{(i)})
\propto
 \exp\!\left(\mathrm{index}_x(i)^\top\xi'\right)
$. We use a dynamic program (DP) to compute the denominator (normalizer) of \(q_x^{\mathrm{budget}}(\xi'\mid\xi^{(i)})\) without enumerating the extreme points. We let $Z_j(r,h)$ be the total weight obtainable from coordinates $j,\ldots,n$ when $r$ unit coordinates and $h\in\{0,1\}$ fractional coordinates remain. With $Z_{n+1}(0,0)=1$ and all other terminal states equal to zero, the backward recursion of the DP is defined through
\[
Z_j(r,h) = Z_{j+1}(r,h)
+\mathbf 1_{\{r>0\}}e^{\mathrm{index}_x(j)}Z_{j+1}(r-1,h)  +\mathbf 1_{\{h=1\}}e^{(n\Gamma-\lfloor n\Gamma \rfloor) \mathrm{index}_x(j)}Z_{j+1}(r,0)
\]
Then, 
\(
\sum_{i \in\mathcal I}
\exp\!\left(\mathrm{index}_x(i)^\top\xi'\right) = Z_1\!\left(\lfloor n\Gamma \rfloor,\mathbf 1_{\{n\Gamma-\lfloor n\Gamma \rfloor>0\}}\right). 
\) 
Thus, a sample is generated sequentially by assigning coordinate $j$ the
value $0$, $1$, or $n\Gamma-\lfloor n\Gamma \rfloor>0$ with weight $Z_{j+1}(r,h)/Z_j(r,h)$, $(\mathbf 1_{\{r>0\}}e^{\mathrm{index}_x(j)}Z_{j+1}(r-1,h))/Z_j(r,h)$, or $(\mathbf 1_{\{h=1\}}e^{(n\Gamma-\lfloor n\Gamma \rfloor ) \mathrm{index}_x(j)}Z_{j+1}(r,0))/Z_j(r,h)$ respectively. After the draw, the DP state is updated through 
\[
(r,h)\longleftarrow
\begin{cases}
(r,h),& \text{if \(\xi_j=0\)},\\
(r-1,h),& \text{if \(\xi_j=1\)},\\
(r,0),& \text{if \(\xi_j=n\Gamma-\lfloor n\Gamma \rfloor\)}.
\end{cases}
\]
This DP defines a valid sampling procedure. Indeed, any infeasible assignment has zero probability because the corresponding
DP value function equals zero. Finally, we note that building the DP table requires $\mathcal O(n \lfloor n\Gamma \rfloor)$ operations and the
subsequent draw requires $\mathcal O(n)$ operations.

\section{Supplementary Implementation Details}
\label{app:exp}

\paragraph{1. Preconditioning subgradients.} When implementing Algorithm~\ref{alg:MH-SGD2} in Section~\ref{subsec:continuous-first-stage}, we precondition the subgradients \(Z_{x_n}(I_{n+1}) = g_f(x) - E^{\top}\pi^\star(x, \xi^{I_{n+1}})\) to reduce the sensitivity to heterogeneous magnitudes of the subgradient entries. Recall that, in the robust location-transportation problem, the first-stage cost is \(f(x) := c^{\top}x\) and the coefficients \(c \in \mathbb{R}^{2m}\) consist of two parts, with the first \(m\) entries denoting the facility opening costs and the last \(m\) entries denoting the capacity expansion costs. Then, \(g_f(x) \equiv c\). In addition, the first \(m\) entries of the dual vector \(E^{\top}\pi^\star(x, \xi^{I_{n+1}})\) equal zero because the facility opening variables \(y_i\) do not partake the second-stage transportation linear program, and the last \(m\) entries of \(E^{\top}\pi^\star(x, \xi^{I_{n+1}})\) lie within the interval \([\mathbf 0, h\mathbf 1]\), where \(h\) denotes the unit cost of using the emergency supply. Therefore, we scale \(Z_{x_n}(I_{n+1})\) by a diagonal matrix \(H \in \mathbb{R}^{2m\times 2m}\) with
\[
H_{ii}\propto\max\{|c_i|,10^{-8}\}, \quad
H_{m+i,m+i}\propto
\max\{|c_{m+i}|,|c_{m+i}-h|,10^{-8}\}
\quad \forall i\in[m],
\]
and we normalize \(H\) so that \(\det(H)=1\). Accordingly, we implement the subgradient descent as
\[
x
\gets
\Pi_{\mathcal X}
\bigl(x-\alpha_{n+1}H^{-1}Z_{x_n}(I_{n+1})\bigr),
\]
where we pre-compute \(H\) and fix it throughout all experiments.

\paragraph{2. Practical certification.}
When implementing Algorithm~\ref{alg:outer-loop} in Section~\ref{subsec:integer-first-stage}, we generalize the certification criterion $\bar\theta\ge Q^\star(\bar x)$ to be $\bar\theta + \varepsilon_{\mathrm{cert}} \ge Q^\star(\bar x)$ for a certification error $\varepsilon_{\mathrm{cert}}\in(0,\varepsilon_{\mathrm{opt}})$. Accordingly, if we establish a slightly more conservative upper bound \(\widehat U := f(\bar x)+\bar\theta+\varepsilon_{\mathrm{cert}}\), then the same probabilistic guarantee of Theorem~\ref{thm:outer-loop} continues to hold. Practically, the looser certification criterion makes it easier to find an adversarial scenario, giving rise to a smaller value of \(\rho\). For our independent uniform proposal \(\nu\), the corresponding one-call miss probability becomes 
\[
\rho_{\varepsilon_{\mathrm{cert}}}(\bar x,\bar\theta)
:=
1-
\nu\!\left(
\left\{\xi\in\Xi:
Q(\bar x,\xi)>
\bar\theta+\varepsilon_{\mathrm{cert}}
\right\}
\right).
\]
In practice, \(\rho_{\varepsilon_{\mathrm{cert}}}(\bar x,\bar\theta)\) can be estimated from the empirical quantiles of the recourse function values \(\big\{Q(x_n, \xi^{(I_{n+1})})\big\}_n\), which are observed in the warm-start of Algorithm~\ref{alg:outer-loop} (line 1). We use $\rho=0.9$ and $\varepsilon_{\mathrm{cert}}=100$ for all experiments in Section~\ref{subsec:integer-first-stage}.


\section{Technical Proofs}
\label{app:proof}

\subsection{Proof of Lemma \ref{lem:recourse_lipschitz_smoothing}}

\begin{proof}{Proof:}
First, to show the representation~\eqref{QP}, we write the dual formulation for \(Q(x, \xi)\) as
\(
\max_{\pi \in \Pi} \pi^{\top}(h-Ex-U\xi),
\)
where \(\pi\) denotes dual variables associated with the constraints in~\eqref{recourse} and $\Pi=\{\pi\ge0:G^\top\pi=b\}$ is the dual feasible region. The strong duality holds because of Assumption~\ref{assump:1}. 

Since $\Pi$ is a nonempty pointed polyhedron, its set of extreme points $\mathcal V:=\operatorname{ext}(\Pi)$ is nonempty and finite. It follows from Assumption~\ref{assump:1} that, for each \(Q(x, \xi)\), an optimal \(\pi^\star\) exists in \(\mathcal V\).

For \textit{fixed} $(x,i)$, the auxiliary \(\pi^\star\)-selection problem admits a 
linear programming formulation
\(
\min_{\pi \in \Pi,z} \{\mathbf 1^\top z: \ \pi^\top (h-Ex-U\xi)=Q(x,\xi^{(i)}), \ -z\le E^\top\pi\le z\}.
\)
This program is feasible and bounded below by zero, so its
minimum is attained. More generally, for \textit{every} nonempty face (denoted by \texttt{Face}) of $\Pi$, the same argument shows that $\min_{\pi\in \texttt{Face}}\|E^\top\pi\|_1$ is attained. We fix, for each \texttt{Face}, a representative $\widehat\pi_\texttt{Face} \in\argmin_{\pi\in \texttt{Face}}\|E^\top\pi\|_1$. The dual optimal set is a nonempty face of $\Pi$, denoted by $\texttt{Opt-Face}$, so the prescribed selection is $\pi^\star(x,i)=\widehat\pi_{\texttt{Opt-Face}}$. \textbf{Note that this mapping is measurable}. To see this, $\Pi$ has finitely many vertices and extreme rays. The region of $(x,\xi)$ on which $\texttt{Opt-Face}$ equals a given face is determined by finitely
many equalities and strict inequalities involving their
affine objective values, and is therefore Borel measurable, denoted by $R_{\texttt{Face}}$. On each such Borel region, the selection equals the fixed representative $\widehat\pi_\texttt{Face}$ of the corresponding face. Hence, the preimage of any open set under $\pi^\star$, namely $(\pi^\star)^{-1}(\mathcal O)$, is a finite union of such Borel regions (i.e. $\cup_{\texttt{Face}:\widehat{\pi}_{\texttt{Face}}\in \mathcal O} R_{\texttt{Face}}$), which is a Borel set and proves measurability.

Second, for every $x,y\in\mathcal X$ and every scenario $\xi^{(i)}$,
dual feasibility and optimality at $x$ give
\[
\begin{aligned}
Q(y,\xi^{(i)})
&=\max_{\pi \in \Pi}
\pi^\top(h-Ey-U\xi^{(i)})\\
&\ge \pi^\star(x,\xi^{(i)})^\top(h-Ey-U\xi^{(i)})\\
&=Q(x,\xi^{(i)})
+(-E^\top\pi^\star(x,\xi^{(i)}))^{\top}(y-x).
\end{aligned}
\]
Thus, $-E^\top\pi^\star(x,i)\in\partial_xQ(x,\xi^{(i)})$. 
It remains to show properties~{(1)--(3)}. Choose $L_Q:=\max\left\{1,\max_{v\in\mathcal V}\|v\|,
\max\|\widehat\pi_\texttt{Face}\|\right\}<\infty$. Then, for every $x_1,x_2\in\mathcal X$ and $\xi\in\Xi$, the finite
maximum representation yields
\(
|Q(x_1,\xi)-Q(x_2,\xi)| \le\max_{v\in\mathcal V}|v^\top E(x_1-x_2)| \le L_Q\|E\|\,\|x_1-x_2\|,
\) 
proving that $Q(\cdot,\xi)$ is $L_Q'$-Lipschitz on $\mathcal X$. 
Similarly, for all \(\xi^1, \xi^2 \in \Xi\), 
\(
|Q(x,\xi^1)-Q(x,\xi^2)| \le\max_{v\in\mathcal V}|v^\top U(\xi^1-\xi^2)| \le L_Q\|U\|\,\|\xi^1-\xi^2\|, 
\)
proving that $Q(x,\cdot)$ is $L'_\xi$-Lipschitz on $\Xi$. 

Finally, since \(\pi^\star(x,i)\) equals one of the fixed representatives $\widehat\pi_{\texttt{Face}}$, it holds that
\(
\|E^\top\pi^\star(x,i)\|
\le\|E\|\|\pi^\star(x,i)\|
\le L_Q\|E\|=L'_Q.
\)
Together with measurability, this proves property (3). \hfill \(\Box\)
\end{proof}

\subsection{Proof of Lemma~\ref{prop:L-smooth}}
\begin{proof}{Proof}
For any $x,y\in\mathcal X$, the definition of $p_x$ gives
\begin{align*}
F_w(y)-F_w(x) &=f(y)-f(x)
+w\log\!\left(
\sum_{i\in\mathcal I}p_x(i)
\exp\!\left(
\frac{Q(y,\xi^{(i)})-Q(x,\xi^{(i)})}{w}
\right)\right) 
\\
&\ge f(y)-f(x)
+\sum_{i\in\mathcal I}p_x(i)
\bigl(Q(y,\xi^{(i)})-Q(x,\xi^{(i)})\bigr)\\
&\ge g_f(x)^{\top}(y-x)
+\sum_{i\in\mathcal I}p_x(i)
(-E^\top\pi^\star(x,i))^{\top}(y-x) \ = \ z_w(x)^{\top}(y-x),
\end{align*}
where the first inequality follows from Jensen's inequality
and the second follows from the subgradient inequalities for
$f$ and $Q(\cdot,\xi^{(i)})$ by Lemma~\ref{lem:recourse_lipschitz_smoothing}. 
Thus, $z_w(x)\in\partial F_w(x)$.

For any $x\in\mathcal X$ and any unit vector $u$, convexity and
Lipschitz continuity on an open neighborhood of $\mathcal X$ give
$t \cdot g_f(x)^{\top}u \le f(x+tu)-f(x)\le L_ft$
for a sufficiently small $t>0$. Thus, $\|g_f(x)\|\le L_f$ and
\(
\|Z_x(i)\| =\|g_f(x)-E^\top\pi^\star(x,i)\| \le\|g_f(x)\|+\|E^\top\pi^\star(x,i)\| \le L_f+L_Q'.
\) 
Since $p_x(i)\ge0$ and $\sum_{i\in\mathcal I}p_x(i)=1$, we have 
\(
\|z_w(x)\|
=
\left\|\sum_{i\in\mathcal I}p_x(i)Z_x(i)\right\|
\le\sum_{i\in\mathcal I}p_x(i)\|Z_x(i)\|
\le L_f+L_Q'.
\) 
This completes the proof. \hfill \(\Box\)
    
\end{proof}

\subsection{Proof of Lemma~\ref{lem:doeblin}}
\begin{proof}{Proof:}
 For any stochastic matrix $S$, the coefficient of ergodicity
 (Dobrushin coefficient, see~\cite{Dobrushin1956}) is defined by
\[
 \tau(S)
 \;:=\;
 \frac12\max_{i,k}\|S(i,\cdot)-S(k,\cdot)\|_1
 \;=\;
 \max_{\substack{\tilde z^\top \mathbf 1=0\\ \|\tilde z\|_1=1}}\|S^\top \tilde z\|_1.
 \] Define \(
R_x := \big(\mathrm{Id}-P_x+\mathbf{1}p_x^\top\big)^{-1}.
\) Since $P_x$ is an irreducible and aperiodic Markov matrix on a finite
state space, it has a simple eigenvalue $1$ with right eigenvector
$\mathbf{1}$ and left eigenvector $p_x^\top$, while all other
eigenvalues $\lambda$ satisfy $|\lambda|<1$.

For any eigenvector $v$ with $P_xv=\lambda v$ and $\lambda\neq1$,
multiplying by $p_x^\top$ gives
$
\lambda p_x^\top v
=
p_x^\top P_xv
=
p_x^\top v,
$
hence
$
(\lambda-1)p_x^\top v=0,
$
and therefore $p_x^\top v=0$. 
Moreover, $\mathbb{R}^n
=
\operatorname{span}\{\mathbf1\}
\oplus
\ker(p_x^\top),$ and both subspaces are invariant under $P_x$. On
$\operatorname{span}\{\mathbf1\}$, the operator
$P_x-\mathbf1p_x^\top$ acts as zero, while on
$\ker(p_x^\top)$ it agrees with $P_x$.  Consequently,
\(
\sigma\big(P_x-\mathbf1p_x^\top\big)
=
\{0\}\cup
\big(\sigma(P_x)\setminus\{1\}\big)
\)
and hence
\(
\rho\big(P_x-\mathbf1p_x^\top\big)
=
\max\{|\lambda|:
\lambda\in\sigma(P_x)\setminus\{1\}\}
<1.
\)
Now fix $N\ge0$. For the finite sum, we have the exact geometric
identity
\(
\big(\mathrm{Id}-P_x+\mathbf1p_x^\top\big)
\sum_{t=0}^{N}
\big(P_x-\mathbf1p_x^\top\big)^t
=
\mathrm{Id}-
\big(P_x-\mathbf1p_x^\top\big)^{N+1}
\)
and similarly on the right.

Since
$\rho(P_x-\mathbf1p_x^\top)<1,$
we have
$
\big\|
(P_x-\mathbf1p_x^\top)^{N+1}
\big\|
\to0
$
as $N\to\infty$ in any operator norm. Taking limits yields
\(
R_x
=
\big(\mathrm{Id}-P_x+\mathbf1p_x^\top\big)^{-1}
=
\sum_{t=0}^{\infty}
\big(P_x-\mathbf1p_x^\top\big)^t.
\)
We next bound the $\infty\!\to\!\infty$ norm of \(R_x\).
For any vector $z\in\mathbb{R}^n$, define
$
\widetilde z
:=
z-(z^\top\mathbf1)p_x,
$ then
$
\widetilde z^\top\mathbf1=0.
$
Moreover,
$
z^\top(P_x-\mathbf1p_x^\top)
=
\widetilde z^\top P_x,
$
and, by induction, for every $t\ge1$,
\(
z^\top(P_x-\mathbf1p_x^\top)^t
=
\widetilde z^\top P_x^t.
\)
Since $p_x$ is a probability vector, it holds that 
\(
\|\widetilde z\|_1
=
\|z-(z^\top\mathbf1)p_x\|_1
\le
\|z\|_1
+
|z^\top\mathbf1|\,\|p_x\|_1
\le
2\|z\|_1.
\)
Thus, $\|z\|_1=1$ implies
$\|\widetilde z\|_1\le2$.
Using the row-vector characterization of the induced
$\infty\!\to\!\infty$ norm, for every $t\ge1$,
\[
\begin{aligned}
\big\|
(P_x-\mathbf1p_x^\top)^t
\big\|_{\infty\to\infty}
&=
\sup_{\|z\|_1=1}
\left\|
z^\top(P_x-\mathbf1p_x^\top)^t
\right\|_1\\
&=
\sup_{\|z\|_1=1}
\|\widetilde z^\top P_x^t\|_1 \ \le \ 
\sup_{\substack{
\widetilde z^\top\mathbf1=0\\
\|\widetilde z\|_1\le2}}
\|\widetilde z^\top P_x^t\|_1 \ = \
2\,\tau(P_x^t).
\end{aligned}
\]
By the submultiplicativity of the Dobrushin coefficient,
$
\tau(P_x^t)
\le
[\tau(P_x)]^t
$ for all \(t\ge1\). 
Therefore,
\(
\big\|
(P_x-\mathbf1p_x^\top)^t
\big\|_{\infty\to\infty}
\le
2[\tau(P_x)]^t.
\)
Finally, using the geometric representation of $R_x$ and the triangle
inequality, we conclude that 
\[
\begin{aligned}
\|R_x\|_{\infty\to\infty}
&\le
1+
\sum_{t=1}^{\infty}
\big\|
(P_x-\mathbf1p_x^\top)^t
\big\|_{\infty\to\infty}\\
&\le
1+
2\sum_{t=1}^{\infty}
[\tau(P_x)]^t
\ = \ 
\frac{1+\tau(P_x)}{1-\tau(P_x)}
\ \le \ 
\frac{2}{1-\tau(P_x)}.
\end{aligned}
\]
We further bound it through a lemma.  
\begin{lemma}[Doeblin minorization implies bounded coefficient]
If $P_x$ satisfies the Doeblin minorization condition $P_x(i,\cdot)\;\ge\;\gamma_x\,\nu_x(\cdot),
 $ for some probability measure $\nu_x$ and constant $\gamma_x>0$, 
 then the ergodicity coefficient satisfies $\tau(P_x)\le1-\gamma_x$.
 \end{lemma}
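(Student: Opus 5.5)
The plan is to compare any two rows of $P_x$ by splitting each into a common minorizing part and a nonnegative remainder. By the Doeblin condition $P_x(i,\cdot)\ge\gamma_x\nu_x(\cdot)$, for every state $i$ define
\[
R_i(\cdot) := \frac{P_x(i,\cdot)-\gamma_x\nu_x(\cdot)}{1-\gamma_x}
\]
when $\gamma_x<1$. Each $R_i$ is nonnegative because of the minorization. It has total mass $(1-\gamma_x)/(1-\gamma_x)=1$, since both $P_x(i,\cdot)$ and $\nu_x$ are probability measures. So $R_i$ is a probability measure, and we have the mixture representation
\[
P_x(i,\cdot)=\gamma_x\nu_x(\cdot)+(1-\gamma_x)R_i(\cdot).
\]

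Next, for any two states $i,k$, the common component $\gamma_x\nu_x$ cancels in the difference:
\[
P_x(i,\cdot)-P_x(k,\cdot)=(1-\gamma_x)\bigl(R_i-R_k\bigr).
\]
Taking half the $\ell_1$ norm and using $\tfrac12\|R_i-R_k\|_1\le \tfrac12(\|R_i\|_1+\|R_k\|_1)=1$ gives $\tfrac12\|P_x(i,\cdot)-P_x(k,\cdot)\|_1\le 1-\gamma_x$. Maximizing over $i,k$ then yields $\tau(P_x)\le 1-\gamma_x$ from the first (row-difference) characterization of $\tau$ used above. Alternatively, one can argue directly: for $\tilde z^\top\mathbf 1=0$, we have $\tilde z^\top P_x=(1-\gamma_x)\tilde z^\top R$, where $R$ is the stochastic matrix with rows $R_i$, because $\tilde z^\top\mathbf 1\nu_x^\top=0$. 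Then $\|\tilde z^\top R\|_1\le\|\tilde z\|_1$ since $R$ is stochastic, which gives the same bound.

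Two points need care, and they are the only real obstacles; both are minor. First, the boundary case $\gamma_x=1$. There, summing the minorization over $j$ forces $P_x(i,\cdot)=\nu_x$ for every $i$, so all rows coincide, $\tau(P_x)=0=1-\gamma_x$, and the claim holds trivially. The same summation argument also shows that $\gamma_x\le1$ always. Second, checking nonnegativity and unit mass of $R_i$ is immediate but essential for the $\ell_1$ contraction. Combining $\tau(P_x)\le1-\gamma_x$ with the earlier bound $\|R_x\|_{\infty\to\infty}\le 2/(1-\tau(P_x))$ then yields $2/\gamma_x$, completing Lemma~\ref{lem:doeblin}.
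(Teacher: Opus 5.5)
Your proof is correct, and it does more than the paper does at this point. The paper gives no argument for this lemma. It cites Makur and Singh (Eq.~(2) and Theorem~1, Part~6), that is, the general inequality that the Dobrushin coefficient is at most one minus the Doeblin coefficient $\sum_j\min_i P_x(i,j)$. Under the minorization the latter is at least $\gamma_x\sum_j\nu_x(j)=\gamma_x$.

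Your row-splitting argument is a self-contained proof of exactly that cited inequality. It can be shortened further using the identity $\tfrac12\|P_x(i,\cdot)-P_x(k,\cdot)\|_1=1-\sum_j\min\{P_x(i,j),P_x(k,j)\}\le 1-\gamma_x$. This never divides by $1-\gamma_x$, so your separate treatment of $\gamma_x=1$ becomes unnecessary.

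The citation buys brevity. Your version buys self-containment and makes the implicit fact $\gamma_x\le 1$ explicit. It also carries over verbatim to a general measurable state space, with total variation in place of $\ell_1$. That is the setting of Example~\ref{ex:uniform-mixing}, where the paper again appeals to an external reference (Capp\'e et al.) for the same implication.

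Your second variant, $\tilde z^\top P_x=(1-\gamma_x)\tilde z^\top R$ for $\tilde z^\top\mathbf 1=0$, plugs directly into the variational form of $\tau$ used in the proof of Lemma~\ref{lem:doeblin}. Rename your remainder matrix, though, since $R_x$ already denotes the fundamental matrix in that proof.
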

 This is the standard result from literature; see \cite[Eq.~(2) and Theorem~1, Part~6]{MakurSingh2024}. Combining all completes the proof. \hfill \(\Box\)

\end{proof}

\subsection{Proof of Proposition~\ref{prop:poisson-noV}}
The proof of Proposition~\ref{prop:poisson-noV} uses the following three lemmas. Lemmas~\ref{lem:lip barker}--\ref{lem:lipschitz-MH-nom} prove Lipschitz continuity of the Markov kernel \(P_x(\cdot, \cdot)\), and then Lemma~\ref{prop:2} bounds the sample error. In addition, Lemma~\ref{lem:lip-kernal-general} generalizes Lemmas~\ref{lem:lip barker}--\ref{lem:lipschitz-MH-nom} to a general state space. Lemma~\ref{lem:lip-kernal-general} is not directly used in the proof of Proposition~\ref{prop:poisson-noV} but will be useful later in Section~\ref{sec:3}. Lemma~\ref{lem:target-distribution-lipschitz} establishes the Lipschitz
continuity of \(p_x\) on a general state space and is used both below
and in Section~\ref{sec:3}. 

\begin{lemma}[Element/row-wise Lipschitz continuity of the Barker kernel]\label{lem:lip barker}
Consider a finite state space $\mathcal{I}$ and a family of target distributions 
$p_x(i)\propto \exp\!\big(Q_{i}(x)/w\big)$, where each function 
$Q_{i}(\cdot)$ is $L_Q'$--Lipschitz on \(\X\) and $w>0$ is fixed.  
Let $q(j\mid i)>0$ denote a fixed proposal kernel, and define the Barker transition matrix 
$P_x=(P_x(i,j))_{i,j\in\mathcal{I}}$ by
\[
P_x(i,j)
=
\begin{cases}
q(j\mid i)\,A_x(i,j), & j\neq i,\\[3pt]
1-\sum_{k\neq i}q(k\mid i)\,A_x(i,k), & j=i,
\end{cases}
\quad\text{where}\quad
A_x(i,j)=\dfrac{r_x(i,j)}{1+r_x(i,j)},
\]
and $r_x(i,j):=\dfrac{p_x(j)\,q(i\mid j)}{p_x(i)\,q(j\mid i)}$.
Then for all $x,x'\in\mathcal{X}$ and $i,j \in \I$,
\(
|P_x(i,j)-P_{x'}(i,j)| \;\le\; ({L_Q'}/{(2w)})\,\|x-x'\|.
\)
Moreover, for each row $i$,
\(
\sum_{j \in \I}|P_x(i,j)-P_{x'}(i,j)| \;\le\;  L_P\|x-x'\|
\)
with \(L_P := {L'_Q}/{w}\).
\end{lemma}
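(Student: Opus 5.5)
The natural approach is to pass through the log-ratio $s_x(i,j):=\log r_x(i,j)$ and exploit the fact that the Barker acceptance is the logistic function of it. First, since $p_x(j)/p_x(i)=\exp\{(Q_j(x)-Q_i(x))/w\}$, we have
\[
s_x(i,j)=\frac{Q_j(x)-Q_i(x)}{w}+\log\frac{q(i\mid j)}{q(j\mid i)},
\]
and the proposal term does not depend on $x$. Because each $Q_k(\cdot)$ is $L_Q'$-Lipschitz, $|s_x(i,j)-s_{x'}(i,j)|\le (2L_Q'/w)\|x-x'\|$. Next, $A_x(i,j)=\sigma(s_x(i,j))$ with $\sigma(t)=e^t/(1+e^t)$. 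Since $\sigma'(t)=\sigma(t)(1-\sigma(t))\le 1/4$, the mean value theorem gives
\[
|A_x(i,j)-A_{x'}(i,j)|\le \frac{1}{4}\cdot\frac{2L_Q'}{w}\|x-x'\|=\frac{L_Q'}{2w}\|x-x'\|.
\]

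For the off-diagonal entries $j\neq i$, we have $|P_x(i,j)-P_{x'}(i,j)|=q(j\mid i)\,|A_x(i,j)-A_{x'}(i,j)|$. Since $q(j\mid i)\le 1$, this yields the elementwise bound $(L_Q'/(2w))\|x-x'\|$. Summing over $j\neq i$ and using $\sum_{j\neq i}q(j\mid i)\le 1$ gives
\[
\sum_{j\neq i}|P_x(i,j)-P_{x'}(i,j)|\le \frac{L_Q'}{2w}\|x-x'\|.
\]
For the diagonal entry, $P_x(i,i)-P_{x'}(i,i)=-\sum_{k\neq i}q(k\mid i)(A_x(i,k)-A_{x'}(i,k))$. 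The same estimate therefore bounds it by $(L_Q'/(2w))\|x-x'\|$, which settles the elementwise claim for $j=i$ as well.

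Adding the diagonal and off-diagonal contributions gives the row sum $\sum_{j}|P_x(i,j)-P_{x'}(i,j)|\le (L_Q'/w)\|x-x'\|=L_P\|x-x'\|$.

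The only delicate point is obtaining the constant $1/4$ uniformly. The argument of $\sigma$ may be unbounded, so we need the derivative bound $\sigma'\le 1/4$ to hold globally, which it does. With that in hand, the mean value theorem applies along the segment between $s_{x'}(i,j)$ and $s_x(i,j)$ without any restriction on where these values lie. Everything else in the argument is bookkeeping.
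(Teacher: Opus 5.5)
Your proof is correct and follows essentially the same route as the paper: you write the Barker acceptance as the logistic function of the log-ratio, whose $x$-dependence is $2L_Q'/w$-Lipschitz, and use the global bound $\sigma'\le 1/4$. The off-diagonal and diagonal entries are then each bounded by $(L_Q'/(2w))\|x-x'\|$, and these add up to the row bound $L_P=L_Q'/w$.
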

\begin{proof}{Proof of Lemma~\ref{lem:lip barker}:}
Write 
$A_x(i,j)=\sigma(\alpha_x(i,j))$ with $\sigma(t)=\tfrac{e^t}{1+e^t}$ and 
\(
\alpha_x(i,j)=\log p_x(j)-\log p_x(i)+\log q(i\mid j)-\log q(j\mid i) =(Q_j(x)-Q_i(x))/w
+\log q(i\mid j)-\log q(j\mid i).
\) Since the proposal is independent of $x$ and each $Q_i$ is
$L_Q'$-Lipschitz, we have
\[
|\alpha_x(i,j)-\alpha_{x'}(i,j)| \le\frac{|Q_j(x)-Q_j(x')|+|Q_i(x)-Q_i(x')|}{w} \le\frac{2L_Q'}{w}\|x-x'\|.
\]

Since $\sigma'(t)=\sigma(t)(1-\sigma(t))\le 1/4$, we have 
\(
|A_x(i,j)-A_{x'}(i,j)| \le \tfrac{1}{4}\,|\alpha_x(i,j)-\alpha_{x'}(i,j)| \le ({L_Q'}/{(2w)})\|x-x'\|.
\)
We note that, for off-diagonal entries,
\[
|P_x(i,j)-P_{x'}(i,j)|=q(j\mid i)\,|A_x(i,j)-A_{x'}(i,j)|
\le q(j\mid i)\,\tfrac{L_Q'}{2w}\,\|x-x'\| \le \tfrac{L_Q'}{2w}\|x-x'\|,
\]
and for the diagonal entry,
\[
|P_x(i,i)-P_{x'}(i,i)|
\le \sum_{k\ne i} q(k\mid i)\,|A_x(i,k)-A_{x'}(i,k)|
\le \tfrac{L_Q'}{2w}\,\|x-x'\| \sum_{k\ne i} q(k\mid i)
\le \tfrac{L_Q'}{2w}\,\|x-x'\|.
\]
Summing over $j$ gives 
\(
\sum_j |P_x(i,j)-P_{x'}(i,j)|
\le \tfrac{L_Q'}{2w}\|x-x'\|\big(\sum_{k\ne i} q(k\mid i)+1\big)
\le \tfrac{L_Q'}{w}\|x-x'\|.
\) \hfill \(\Box\)
\end{proof}

\begin{lemma}[Element/row-wise Lipschitz continuity of the MH kernel]
\label{lem:lipschitz-MH-nom}
Let $\mathcal I$ be a finite state space and consider target distributions
$p_x(i)\propto \exp\!\big(Q_{i}(x)/w\big)$, where each $Q_{i}(\cdot)$ is $L_Q'$--Lipschitz on \(\X\) and $w>0$ is fixed.
Let $q(j\mid i)>0$ be a proposal kernel with $\sum_{j} q(j\mid i)=1$ for all $i$.
Define the Metropolis--Hastings transition matrix by
\[
P_x(i,j)=
\begin{cases}
q(j\mid i)\,A_x(i,j), & j\neq i,\\[3pt]
1-\sum_{k\neq i} q(k\mid i)\,A_x(i,k), & j=i,
\end{cases}
\qquad
A_x(i,j)=\min\!\left\{\,1,\ \dfrac{p_x(j)\,q(i\mid j)}{p_x(i)\,q(j\mid i)}\right\}.
\]
Then for all $x,x'\in\mathbb R^d$ and $i,j\in\mathcal I$,
\(
|P_x(i,j)-P_{x'}(i,j)| \;\le\; q(j\mid i)\,({2L'_Q}/{w})\,\|x-x'\| 
\) whenever \(j \neq i\) and 
\(
|P_x(i,i)-P_{x'}(i,i)| \;\le\; ({4L'_Q}/{w})\,\|x-x'\|.
\)
In particular, for each row $i$, it holds that 
\(
\sum_{j\in\mathcal I} |P_x(i,j)-P_{x'}(i,j)| \;\le\;  L_P\|x-x'\| 
\)
with \(L_P := {4L'_Q}/{w}\).
\end{lemma}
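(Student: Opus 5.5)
Following the template of Lemma~\ref{lem:lip barker}, I will write the acceptance probability through a one-dimensional link function of the log-ratio:
\[
A_x(i,j)=\phi\bigl(\alpha_x(i,j)\bigr),\qquad \phi(t):=\min\{1,e^{t}\},\qquad
\alpha_x(i,j)=\frac{Q_j(x)-Q_i(x)}{w}+\log q(i\mid j)-\log q(j\mid i).
\]
Because the proposal does not depend on $x$ and each $Q_i$ is $L'_Q$-Lipschitz, the log-ratio satisfies
\[
|\alpha_x(i,j)-\alpha_{x'}(i,j)|\le \frac{2L'_Q}{w}\|x-x'\|.
\]
The function $\phi$ is nondecreasing. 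It is also $1$-Lipschitz on $\mathbb R$: for $t\le0$ we have $\phi'(t)=e^t\le1$, for $t>0$ we have $\phi'(t)=0$, and $\phi$ is continuous at $0$. Therefore
\[
|A_x(i,j)-A_{x'}(i,j)|\le |\alpha_x(i,j)-\alpha_{x'}(i,j)|\le \frac{2L'_Q}{w}\|x-x'\|.
\]
This is the only real departure from the Barker case: the Lipschitz constant of $\phi$ is $1$ instead of the logistic bound $1/4$, and the kink of $\phi$ at $0$ means I argue with the piecewise derivative and continuity rather than a global derivative.

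For an off-diagonal entry $j\neq i$, the kernel gives $P_x(i,j)=q(j\mid i)A_x(i,j)$, so multiplying the acceptance bound by $q(j\mid i)$ yields the first claim,
\[
|P_x(i,j)-P_{x'}(i,j)|\le q(j\mid i)\,\frac{2L'_Q}{w}\|x-x'\|.
\]
For the diagonal entry, $P_x(i,i)=1-\sum_{k\ne i}q(k\mid i)A_x(i,k)$, and the triangle inequality gives
\[
|P_x(i,i)-P_{x'}(i,i)|\le \sum_{k\neq i}q(k\mid i)\,\frac{2L'_Q}{w}\|x-x'\|\le \frac{2L'_Q}{w}\|x-x'\|,
\]
since $\sum_{k\ne i}q(k\mid i)\le1$. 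This is already at most the stated $4L'_Q/w$, so the diagonal claim holds with room to spare.

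For the row sum, I add the off-diagonal bounds and the diagonal bound:
\[
\sum_{j\in\mathcal I}|P_x(i,j)-P_{x'}(i,j)|\le \frac{2L'_Q}{w}\|x-x'\|\Bigl(\sum_{j\neq i}q(j\mid i)+1\Bigr)\le \frac{4L'_Q}{w}\|x-x'\|.
\]
This gives $L_P=4L'_Q/w$. The only step needing care is the Lipschitz property of $\min\{1,e^t\}$ across its kink; everything else mirrors the Barker proof.
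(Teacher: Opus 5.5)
Your proof is correct and follows essentially the same route as the paper's. Both write the MH acceptance as the 1-Lipschitz map $t\mapsto\min\{1,e^t\}$ applied to the log-ratio, and bound that log-ratio's variation by $2L'_Q\|x-x'\|/w$. Both then treat the off-diagonal entries, the diagonal entry, and the row sum via $\sum_{j\neq i}q(j\mid i)\le 1$. The paper's proof also obtains the sharper diagonal constant $2L'_Q/w$ rather than the stated $4L'_Q/w$.
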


\begin{proof}{Proof of Lemma~\ref{lem:lipschitz-MH-nom}:}
Set $
\alpha_x(i,j):=\log p_x(j)-\log p_x(i)+\log q(i\mid j)-\log q(j\mid i).
$
Note $A_x(i,j)=g(\alpha_x(i,j))$ with $g(t):=\min\{1,e^t\}$. Since $g$ is $1$--Lipschitz on $\mathbb R$, 
we have
\(
|A_x(i,j)-A_{x'}(i,j)| \;\le\; |\alpha_x(i,j)-\alpha_{x'}(i,j)|.
\)
Since the proposal is independent of $x$ and \(\log p_x(j) - \log p_x(i) = (Q_j(x)-Q_i(x))/w\), Lipschitz continuity
of $Q_i$ implies 
\[
|\alpha_x(i,j)-\alpha_{x'}(i,j)| \le\frac{
|Q_j(x)-Q_j(x')|
+|Q_i(x)-Q_i(x')|
}{w} \le\frac{2L_Q'}{w}\|x-x'\|.
\]

Therefore, for $j\ne i$, we have 
\(
|P_x(i,j)-P_{x'}(i,j)|
=q(j\mid i)|A_x(i,j)-A_{x'}(i,j)|
\le q(j\mid i)\frac{2L_Q'}{w}\|x-x'\|,
\)
and, for the diagonal entry,
\[
|P_x(i,i)-P_{x'}(i,i)|
= \Big|\sum_{k\ne i} q(k\mid i)\big(A_{x'}(i,k)-A_x(i,k)\big)\Big|
\le \sum_{k\ne i} q(k\mid i)\,|A_x(i,k)-A_{x'}(i,k)|
\le \tfrac{2L_Q'}{w}\,\|x-x'\|.
\]
Summing over $j$ in row $i$ and using $\sum_{j\ne i} q(j\mid i)\le 1$ yield the row-wise bound. \hfill \(\Box\)
\end{proof}

\begin{lemma}[Lipschitz continuous kernels for a general state space] \label{lem:lip-kernal-general}
Under Assumption~\ref{ass:lipschitz-scores}, consider a Markov kernel \(P_x\) induced by (i) a proposal distribution \(\nu(\cdot)\) on \(\I\) (which is independent of \(x\)) and (ii) either MH or Barker acceptance. Then, Assumption~\ref{ass:general-kernel-lipschitz} holds with $L_P = 2L'_{Q}/w$. In other words, \(
\sup_{I\in\mathcal I}
\|P_x(I,\cdot)-P_y(I,\cdot)\|_{\mathrm{TV}}
\ \le \ (2L'_{Q}/w)\|x-y\|
\) for all \(x, y \in \X\).
\end{lemma}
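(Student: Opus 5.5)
The plan is to write both kernels in the common form
\[
P_x(I,dJ)=\nu(dJ)\,A_x(I,J)+\Bigl(1-\int_{\mathcal I}\nu(dK)\,A_x(I,K)\Bigr)\delta_I(dJ),
\]
with $A_x(I,J)=g(\alpha_x(I,J))$ and $\alpha_x(I,J)=(\widehat Q_x(J)-\widehat Q_x(I))/w$. The proposal $\nu$ does not depend on $x$ and is symmetric in the sense that $q(dJ\mid I)=\nu(dJ)$, so no proposal-ratio term enters $\alpha_x$. Here $g(t)=\min\{1,e^t\}$ for MH and $g(t)=e^t/(1+e^t)$ for Barker. Both choices of $g$ are $1$-Lipschitz: for MH this is immediate, and for Barker it holds with constant $1/4$. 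This mirrors the finite-state arguments of Lemmas~\ref{lem:lip barker}--\ref{lem:lipschitz-MH-nom}, now carried out with measures instead of matrices.

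First, by Assumption~\ref{ass:lipschitz-scores}, for every $I,J$ we have $|\alpha_x(I,J)-\alpha_y(I,J)|\le 2L'_Q\|x-y\|/w$. Consequently $|A_x(I,J)-A_y(I,J)|\le 2L'_Q\|x-y\|/w$ uniformly in $(I,J)$.

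Second, fix $I$ and a Borel set $B$, and compute $P_x(I,B)-P_y(I,B)$. The continuous part contributes $\int_B \nu(dJ)\,(A_x-A_y)(I,J)$. The atom at $I$ contributes $\mathbf 1_B(I)\int_{\mathcal I}\nu(dK)\,(A_y-A_x)(I,K)$.

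The main obstacle is avoiding a factor of $2$ loss from treating these two parts separately, since that would give $4L'_Q/w$ instead of the claimed $2L'_Q/w$. To get the sharp constant, I would write $\Delta(J):=A_x(I,J)-A_y(I,J)$ and separate the two cases for whether $I$ lies in $B$.
\begin{itemize}
\item If $I\notin B$, the difference equals $\int_B\Delta\,d\nu$, whose absolute value is at most $\sup|\Delta|\le 2L'_Q\|x-y\|/w$.
\item If $I\in B$, the difference equals $\int_B\Delta\,d\nu-\int_{\mathcal I}\Delta\,d\nu=-\int_{B^c}\Delta\,d\nu$. Its absolute value is at most $\nu(B^c)\sup|\Delta|\le 2L'_Q\|x-y\|/w$.
\end{itemize}
Both cases use the fact that $\nu$ is a probability measure.

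Taking the supremum over $B$ and then over $I$ yields $\sup_I\|P_x(I,\cdot)-P_y(I,\cdot)\|_{\mathrm{TV}}\le (2L'_Q/w)\|x-y\|$, which is Assumption~\ref{ass:general-kernel-lipschitz} with $L_P=2L'_Q/w$.

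Finally, measurability of $J\mapsto A_x(I,J)$ needs to be checked so that the integrals are well defined. It follows from the measurability of the score $\widehat Q_x$ together with the continuity of $g$.
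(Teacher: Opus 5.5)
Your proposal is correct and is essentially the paper's argument. The paper writes $P_x(I,S)-P_y(I,S)=\int_{\mathcal I}\bigl(\mathbf 1_S(J)-\mathbf 1_S(I)\bigr)\bigl(A_x(I,J)-A_y(I,J)\bigr)\,\nu(dJ)$ and bounds $|\mathbf 1_S(J)-\mathbf 1_S(I)|\le 1$, which is your two-case split in one line; with the same $1$-Lipschitz property of $t\mapsto 1\wedge e^t$ and $t\mapsto e^t/(1+e^t)$, it gives the constant $2L'_Q/w$ just as you obtain it.
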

\begin{proof}{Proof of Lemma~\ref{lem:lip-kernal-general}:}
Let \(A_x(I, J)\) denote either MH or Barker acceptance distribution. 
For every measurable $S\subseteq\mathcal I$, we have
\begin{align*}
    P_x(I,S)-P_{y}(I,S)
    &=
    \int_{\mathcal I}
    A_x(I,J)\mathbf 1_S(J)\,\nu(dJ)
    +
    \mathbf 1_S(I)
    \int_{\mathcal I}
    \bigl(1-A_x(I,J)\bigr)\nu(dJ) \\
    &\quad-
    \int_{\mathcal I}
    A_{y}(I,J)\mathbf 1_S(J)\,\nu(dJ)
    -
    \mathbf 1_S(I)
    \int_{\mathcal I}
    \bigl(1-A_{y}(I,J)\bigr)\nu(dJ) \\
    &=
    \int_{\mathcal I}
    \bigl(A_x(I,J)-A_{y}(I,J)\bigr)
    \mathbf 1_S(J)\,\nu(dJ) 
    -
    \mathbf 1_S(I)
    \int_{\mathcal I}
    \bigl(A_x(I,J)-A_{y}(I,J)\bigr)\nu(dJ) \\
    &=
    \int_{\mathcal I}
    \bigl(\mathbf 1_S(J)-\mathbf 1_S(I)\bigr)
    \bigl(A_x(I,J)-A_{y}(I,J)\bigr)\nu(dJ).
\end{align*}
On the one hand, if \(A_x(I, J)\) is the MH acceptance distribution, then $ A_x(I,J)
    =
    1\wedge
    \exp\left(
        \frac{\widehat Q_x(J)-\widehat Q_x(I)}{w}
    \right)$. Because $u\mapsto1\wedge e^u$ is $1$-Lipschitz, Assumption~\ref{ass:lipschitz-scores} gives
    \(
     |A_x(I,J)-A_{y}(I,J)| \le
    \big({
        |\widehat Q_x(J)-\widehat Q_{y}(J)|
        +
        |\widehat Q_x(I)-\widehat Q_{y}(I)|
    }\big)/{w} \le
    ({2L'_Q}/{w})\|x-y\|.
    \)
On the other hand, if \(A_x(I, J)\) is the Barker acceptance distribution, then 
\(
A_x(I, J) = \exp({(\widehat Q_x(J)-\widehat Q_x(I))}/{w})/(1 + \exp({(\widehat Q_x(J)-\widehat Q_x(I))}/{w}).
\)
Because $u\mapsto e^u/(1+e^u)$ is $1$-Lipschitz (in fact, it is \((1/4)\)-Lipschitz), the same conclusion holds that \(|A_x(I,J)-A_{y}(I,J)| \le (2L_Q'/w)\|x-y\|\). 
It follows that 
\[
|P_x(I,S)-P_{y}(I,S)|\le
  \max | \mathbf 1_S(J)-\mathbf 1_S(I)|  \int_{\mathcal I}
    |A_x(I,J)-A_{y}(I,J)|\,\nu(dJ)
    \le
    \frac{2L_Q'}{w}\|x-y\|.
\]
Taking the supremum over measurable $S$ and $I\in\mathcal I$
completes the proof. \hfill \(\Box\)
\end{proof}

\begin{lemma}[Lipschitz continuity of the target distribution]
\label{lem:target-distribution-lipschitz}
Under Assumption~\ref{ass:lipschitz-scores}, the target distribution
\(p_x\) is Lipschitz continuous in \(x\).Specifically, with
\(L_p:=2L'_Q/w\), it holds that \[
\left\|\frac{dp_x}{d\nu}-\frac{dp_y}{d\nu}\right\|_{L^1(\nu)}
=
2\|p_x-p_y\|_{\mathrm{TV}}
=
\|\mathbf 1p_x-\mathbf 1p_y\|_{\infty\to\infty}
\le L_p\|x-y\|\] for all \(x,y\in\X\). In particular, when \(\I\) is finite, $\|p_x-p_y\|_1
=
\|\mathbf 1(p_x-p_y)^\top\|_{\infty\to\infty}
\le L_p\|x-y\|.$ 
\end{lemma}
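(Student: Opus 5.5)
The plan is to work directly with the density $\phi_x := dp_x/d\nu = e^{\widehat Q_x/w}/Z_x$, where $Z_x := \int_{\mathcal I} e^{\widehat Q_x(J)/w}\,\nu(dJ)$. I will first dispose of the two equalities in the chain. For two probability measures that are absolutely continuous with respect to $\nu$, the identity $\|p_x-p_y\|_{L^1(\nu)\text{-density}} = 2\|p_x-p_y\|_{\mathrm{TV}}$ is the standard Scheffé identity: the supremum over $B$ is attained at $B=\{\phi_x>\phi_y\}$. For the operator norm, $\mathbf 1 p_x - \mathbf 1 p_y$ acts on a bounded function $g$ by $g\mapsto (\int g\,d(p_x-p_y))\mathbf 1$. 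Its $\infty\to\infty$ norm is therefore $\sup_{\|g\|_\infty\le1}|\int g\,d(p_x-p_y)|$, which equals the $L^1(\nu)$ distance of the densities, attained at $g=\operatorname{sign}(\phi_x-\phi_y)$. In the finite case with $\nu$ uniform (or any full-support $\nu$), the same computation gives $\|p_x-p_y\|_1$ as the row-sum norm of the rank-one matrix $\mathbf 1(p_x-p_y)^\top$.

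For the inequality, I will first reduce to the case $\|x-y\|$ small, or simply interpolate along the segment $x_t=y+t(x-y)$. Set $\delta:=L'_Q\|x-y\|$. By Assumption~\ref{ass:lipschitz-scores}, $|\widehat Q_x(I)-\widehat Q_y(I)|\le\delta$ for all $I$. This gives pointwise $e^{-\delta/w}\le e^{\widehat Q_x/w}/e^{\widehat Q_y/w}\le e^{\delta/w}$, and integrating gives $e^{-\delta/w}\le Z_x/Z_y\le e^{\delta/w}$. Hence $|\log\phi_x-\log\phi_y|\le 2\delta/w$ pointwise. Then
\[
\int|\phi_x-\phi_y|\,d\nu=\int \phi_y\,|e^{\log\phi_x-\log\phi_y}-1|\,d\nu\le e^{2\delta/w}-1 .
\]
This is only linear for small $\delta$, so to obtain the clean constant $2L'_Q/w$ globally I will use a path argument. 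Along the segment, the function $t\mapsto\phi_{x_t}(I)$ is Lipschitz for each fixed $I$, with $|\tfrac{d}{dt}\log\phi_{x_t}(I)|\le 2L'_Q\|x-y\|/w$ almost everywhere: the score contributes at most $L'_Q\|x-y\|/w$, and $\log Z_{x_t}$ is Lipschitz with the same constant, by the ratio bound above applied to nearby $t$. Consequently $|\tfrac{d}{dt}\phi_{x_t}(I)|\le\phi_{x_t}(I)\,2L'_Q\|x-y\|/w$.

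Integrating over $t\in[0,1]$ and then over $\nu$, with Tonelli used to swap the two integrals, yields
\[
\int|\phi_x-\phi_y|\,d\nu\le\int_0^1\!\!\int\phi_{x_t}\,d\nu\,dt\cdot\frac{2L'_Q}{w}\|x-y\|=\frac{2L'_Q}{w}\|x-y\|,
\]
because each $\phi_{x_t}$ integrates to one. If $\mathcal X$ is not convex, I will instead chain the local bound $e^{2\delta/w}-1\le(2\delta/w)(1+o(1))$ over a fine partition of the segment in the ambient space, assuming the score is defined there. Alternatively, I can simply note that the path argument only needs the Lipschitz property of $\widehat Q$ along the segment.

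The main obstacle is the step that keeps the constant exactly $2L'_Q/w$ rather than $e^{2\delta/w}-1$. Two points need care: the absolute-continuity-in-$t$ justification, since the scores are Lipschitz and hence differentiable almost everywhere along the path with a measurable joint dependence on $I$, and the interchange of integrals. Everything else is routine.
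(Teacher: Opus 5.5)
Your proof is correct whenever the segment $[y,x]$ lies in $\X$. Convexity from Assumption~\ref{assump:5} guarantees this, and it is in force wherever the paper invokes this lemma. But you use a different path than the paper, and that choice is what creates the caveat you flag.

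You interpolate the decision, $x_t=y+t(x-y)$. This needs the score to be Lipschitz along the segment, plus an absolute-continuity argument in $t$.

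The paper interpolates the scores instead: $r_t\propto\exp\{[(1-t)\widehat Q_y+t\widehat Q_x]/w\}$. Then $r_0=dp_y/d\nu$, $r_1=dp_x/d\nu$, and $t\mapsto r_t(I)$ is smooth with $\frac{d}{dt}r_t=\frac{r_t}{w}\bigl(\Delta-\int\Delta\,r_t\,d\nu\bigr)$, where $\Delta:=\widehat Q_x-\widehat Q_y$ satisfies $\|\Delta\|_\infty\le L'_Q\|x-y\|$. Hence $\|\frac{d}{dt}r_t\|_{L^1(\nu)}\le 2L'_Q\|x-y\|/w$, and integrating over $t\in[0,1]$ finishes.

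The core estimate is the same as yours: your bound $|\frac{d}{dt}\log\phi_{x_t}|\le 2L'_Q\|x-y\|/w$ is the paper's $\frac1w|\Delta-\int\Delta\,r_t\,d\nu|\le 2\|\Delta\|_\infty/w$. The difference is that the paper uses Assumption~\ref{ass:lipschitz-scores} only at the two endpoints. So it needs no convexity of $\X$, no a.e.\ differentiability, and no extension of $\widehat Q$ beyond $\X$.

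Your fallback for nonconvex $\X$ (``assuming the score is defined there'') adds a hypothesis the lemma does not grant; the mixed-integer $\X$ of Section~\ref{sec:4} is an example. Replacing your decision path by the score path repairs this, and every estimate you wrote goes through unchanged.

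The obstacle you identified also disappears without any path at all. Your first step already gives $|\log\phi_x-\log\phi_y|\le u:=2L'_Q\|x-y\|/w$ pointwise, for arbitrary $x,y\in\X$. The loss to $e^{u}-1$ comes only from bounding $|e^{h}-1|$ crudely.

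Take $A:=\{\phi_x>\phi_y\}$ and $a:=p_x(A)$. The two-sided ratio bound gives $p_y(A)\ge e^{-u}a$ and $1-p_y(A)\le e^{u}(1-a)$. Hence $\|p_x-p_y\|_{\mathrm{TV}}=p_x(A)-p_y(A)\le\min\{a(1-e^{-u}),(1-a)(e^{u}-1)\}\le\tanh(u/2)$. Therefore $\int|\phi_x-\phi_y|\,d\nu\le 2\tanh(u/2)\le u$, which is exactly the constant $2L'_Q/w$.
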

\begin{proof}{Proof of Lemma~\ref{lem:target-distribution-lipschitz}} 
    Fix \(x,y\in\X\). For \(t\in[0,1]\), define
\[
r_t(I):=
\frac{\exp\!\left(\bigl[(1-t)Q_\tau(y,\xi_I)+tQ_\tau(x,\xi_I)\bigr]/w\right)}
{\int_{\I}\exp\!\left(\bigl[(1-t)Q_\tau(y,\xi_J)+tQ_\tau(x,\xi_J)\bigr]/w\right)\nu(dJ)}.
\]
Differentiation under the integral and
Assumption~\ref{ass:lipschitz-scores} yield
\[
\frac{d}{dt}r_t(I)
=\frac{r_t(I)}{w}\left(
Q_\tau(x,\xi_I)-Q_\tau(y,\xi_I)
-\int_{\I}\bigl[Q_\tau(x,\xi_J)-Q_\tau(y,\xi_J)\bigr]r_t(J)\nu(dJ)
\right),
\left\|\frac{d}{dt}r_t\right\|_{L^1(\nu)}
\le\frac{2L'_Q}{w}\|x-y\|.
\]
Therefore,
\[
\left\|\frac{dp_x}{d\nu}-\frac{dp_y}{d\nu}\right\|_{L^1(\nu)}
=\|r_1-r_0\|_{L^1(\nu)}
\le\int_0^1\left\|\frac{d}{dt}r_t\right\|_{L^1(\nu)}dt
\le\frac{2L'_Q}{w}\|x-y\|.
\]
The remaining equalities follow from the definition of total variation
and the induced \(\ell_\infty\)-operator norm. The finite-state result
follows by identifying the densities with their probability vectors.
\end{proof}

\begin{lemma}
\label{prop:2}
Let $\widehat Z_x$ denote the centered solution of the Poisson equation $\widehat Z_x-P_x\widehat Z_x=e_x$ with $p_x^\top\widehat Z_x=0$. Then, there exists a constant $B_Z$ 
such that
\[
\sup_{x\in\mathcal X} \ 
\max\big\{\|\widehat Z_x\|_\infty,\|P_x\widehat Z_x\|_\infty\big\}
\le B_Z,
\]
where  \(B_Z =2C_*DL'_Q\). Here, $D:=\max_{x,y\in\mathcal X}\|x-y\|_1$ is the diameter of \(\X\), $L'_Q$ is the Lipschitz modulus of \(Q(\cdot, \xi)\) in Lemma~\ref{lem:recourse_lipschitz_smoothing}, and $C_*$ is the fundamental matrix norm bound from Assumption~\ref{ass:invertibility}. Moreover, there exists a constant $L_Z < \infty$ such that
\(
\|\widehat Z_x-\widehat Z_{x'}\|_\infty
\le L_Z\|x-x'\|
\) and 
\(
\|P_x\widehat Z_x-P_{x'}\widehat Z_{x'}\|_\infty
\le\bigl(L_PB_Z+L_Z\bigr)\|x-x'\|
\)
for all \(x, x' \in \mathcal{X}\), 
where 
\(
L_Z:=C_*(2L_Q'+DL_Q'L_p)+2C_*^2DL_Q'(L_P+L_p).
\)
Here, \(L_p:={2L_Q'}/{w}\) 
and $L_P$ 
comes from either Lemma~\ref{lem:lip barker} or Lemma~\ref{lem:lipschitz-MH-nom}.
\end{lemma}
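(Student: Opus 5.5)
The plan is to write the Poisson solution explicitly through the fundamental matrix and then read off every bound from Assumption~\ref{ass:invertibility} together with the Lipschitz lemmas already proved. Write \(R_x:=(\mathrm{Id}-P_x+\mathbf 1p_x^\top)^{-1}\), so that \(\|R_x\|_{\infty\to\infty}\le C_*\). Let \(d_x(i):=Q(x,\xi^{(i)})-Q(x^\star,\xi^{(i)})\) and \(e_x:=d_x-(p_x^\top d_x)\mathbf 1\); this is the vector whose sampled entry is \(e_{n+1}\). By construction \(p_x^\top e_x=0\).

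\textbf{Step 1 (identify the solution and bound it).} I would set \(\widehat Z_x:=R_xe_x\). Multiplying \((\mathrm{Id}-P_x+\mathbf 1p_x^\top)\widehat Z_x=e_x\) on the left by \(p_x^\top\), and using \(p_x^\top P_x=p_x^\top\), gives \(p_x^\top\widehat Z_x=p_x^\top e_x=0\). Substituting this back yields \(\widehat Z_x-P_x\widehat Z_x=e_x\), so \(\widehat Z_x\) is the centered Poisson solution; uniqueness of the centered solution follows from irreducibility.

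For the sup-norm, Lemma~\ref{lem:recourse_lipschitz_smoothing}(1) gives \(\|d_x\|_\infty\le L_Q'\|x-x^\star\|\le L_Q'D\). The diameter \(D\) is in \(\ell_1\), which dominates the Euclidean norm. Centering at most doubles the bound, so \(\|e_x\|_\infty\le 2DL_Q'\) and \(\|\widehat Z_x\|_\infty\le 2C_*DL_Q'=B_Z\).

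For \(P_x\widehat Z_x\), I would not use \(P_x\widehat Z_x=\widehat Z_x-e_x\), since that loses a term. Instead I would note that \(P_x\) commutes with \(\mathrm{Id}-P_x+\mathbf 1p_x^\top\), because \(P_x\mathbf 1p_x^\top=\mathbf 1p_x^\top=\mathbf 1p_x^\top P_x\). Hence \(P_x\) commutes with \(R_x\), and \(P_x\widehat Z_x=R_x(P_xe_x)\). Since \(P_x\) is stochastic, \(\|P_xe_x\|_\infty\le\|e_x\|_\infty\), which gives the same bound \(B_Z\).

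\textbf{Step 2 (Lipschitz continuity of \(\widehat Z_x\)).} I would use the decomposition
\[
\widehat Z_x-\widehat Z_{x'}=R_x(e_x-e_{x'})+(R_x-R_{x'})e_{x'},\qquad R_x-R_{x'}=R_x\bigl(P_x-P_{x'}+\mathbf 1(p_{x'}-p_x)^\top\bigr)R_{x'},
\]
where the second identity is the resolvent identity \(A^{-1}-B^{-1}=A^{-1}(B-A)B^{-1}\).

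\emph{The middle factor.} The row-wise bounds of Lemma~\ref{lem:lip barker} or Lemma~\ref{lem:lipschitz-MH-nom} give \(\|P_x-P_{x'}\|_{\infty\to\infty}\le L_P\|x-x'\|\). Lemma~\ref{lem:target-distribution-lipschitz} gives \(\|\mathbf 1(p_x-p_{x'})^\top\|_{\infty\to\infty}\le L_p\|x-x'\|\). So the second term is at most \(C_*^2(L_P+L_p)\cdot 2DL_Q'\|x-x'\|\).

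\emph{The first term.} Each entry of \(d_x-d_{x'}\) is bounded by \(L_Q'\|x-x'\|\), since \(x^\star\) is fixed and cancels. For the centering term I would write
\[
p_x^\top d_x-p_{x'}^\top d_{x'}=p_x^\top(d_x-d_{x'})+(p_x-p_{x'})^\top d_{x'},
\]
which is bounded by \(L_Q'\|x-x'\|+L_p\|x-x'\|\cdot DL_Q'\). Hence \(\|e_x-e_{x'}\|_\infty\le(2L_Q'+DL_Q'L_p)\|x-x'\|\), and this term contributes \(C_*(2L_Q'+DL_Q'L_p)\). Summing the two contributions gives exactly the stated \(L_Z\).

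\textbf{Step 3 (Lipschitz continuity of \(P_x\widehat Z_x\)).} I would split
\[
P_x\widehat Z_x-P_{x'}\widehat Z_{x'}=(P_x-P_{x'})\widehat Z_x+P_{x'}(\widehat Z_x-\widehat Z_{x'}).
\]
The first piece is bounded using \(L_P\) and Step 1, and the second using stochasticity of \(P_{x'}\) and Step 2. This gives \((L_PB_Z+L_Z)\|x-x'\|\).

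The only delicate points are the commutation trick in Step 1, which is what gives \(B_Z\) rather than \(B_Z+2DL_Q'\), and consistent norm bookkeeping: the Euclidean \(\|x-x'\|\) must be controlled by the \(\ell_1\) diameter \(D\), and the row-sum bounds on \(P_x-P_{x'}\) must be read as \(\infty\to\infty\) operator norms. I expect the resolvent-identity estimate in Step 2 to be the main obstacle, though it is routine once the operator-norm Lipschitz bounds on \(P_x\) and \(\mathbf 1p_x^\top\) are in hand.
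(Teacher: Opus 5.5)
Your proposal is correct and follows the paper's proof essentially step for step: the same representation $\widehat Z_x=(\mathrm{Id}-P_x+\mathbf 1p_x^\top)^{-1}e_x$ with $\|e_x\|_\infty\le 2DL_Q'$, the same two-term split of $\widehat Z_x-\widehat Z_{x'}$ via the resolvent identity (giving exactly $L_Z$), and the same final decomposition for $P_x\widehat Z_x-P_{x'}\widehat Z_{x'}$. The one deviation is your commutation argument $P_x\widehat Z_x=R_x(P_xe_x)$, which is valid but unnecessary, since the paper simply uses that $P_x$ is row-stochastic to get $\|P_x\widehat Z_x\|_\infty\le\|\widehat Z_x\|_\infty\le B_Z$ directly; that bound already avoids the extra $2DL_Q'$ you were guarding against.
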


\begin{proof}{Proof of Lemma~\ref{prop:2}:}
\textbf{(I) Bounding \(\max\big\{\|\widehat Z_x\|_\infty,\|P_x\widehat Z_x\|_\infty\big\}\).}
Recall that, by definition, 
\(
e_x(i)=Q(x,\xi^{(i)})-Q(x^\star,\xi^{(i)})
-\sum_{j\in\mathcal I}p_x(j)\bigl(Q(x,\xi^{(j)})-Q(x^\star,\xi^{(j)})\bigr).
\)
By Lipschitz continuity of $Q(\cdot, \xi)$ and the definition of $D$, we have 
\(
|Q(x,\xi^{(i)})-Q(x^\star,\xi^{(i)})|\le L_Q'\|x-x^\star\|\le DL_Q'.
\)
Hence, $\|e_x\|_\infty\le2DL_Q'$. 
Since $p_x^\top e_x=0$, the centered Poisson solution is, for every $i\in\mathcal I$, 
\[
\widehat Z_x(i)
=
\sum_{j\in\mathcal I}
\left[(\mathrm{Id}-P_x+\mathbf1p_x^\top)^{-1}\right]_{ij}e_x(j),
\quad 
\widehat Z_x(i)-\sum_{j\in\mathcal I}P_x(i,j)\widehat Z_x(j)=e_x(i).
\]
Then, by Assumption~\ref{ass:invertibility} and for all $i \in \mathcal{I}$, 
\(
\|\widehat Z_x(i)\|_{\infty}
\;\le C_*\sup_{j\in\mathcal I}|e_x(j)| \le 2 C_* D L'_Q.
\)
Now, because $P_x$ is a stochastic row matrix, we have 
\(
\;\|(P_x\widehat Z_x)(i)\|_{\infty}
=
\big|\sum_{j\in\mathcal I}P_x(i,j)\widehat Z_x(j)\big| 
\le \sup_{i \in \mathcal{I}} \|\widehat Z_x(i)\|_{\infty}
\le 2 C_* D L'_Q.
\)
Setting $B_Z:=2 C_* D L'_Q$ completes this part of the proof.

\textbf{(II) Lipschitz continuity of \(\widehat{Z}_x\) and \(P_x\widehat{Z}_x\) on \(\X\).} Throughout this part, we use the infinity vector norm  and its induced
operator norm for matrices. To proceed, we recall that
\item[(A)] By Lemma~\ref{lem:recourse_lipschitz_smoothing},
\(
\sup_{i\in\mathcal I}
|Q(x,\xi^{(i)})-Q(x',\xi^{(i)})|
\le L_Q'\|x-x'\|.
\)
Moreover, since $x^\star\in\mathcal X$, we have 
$
\sup_{i\in\mathcal I}
|Q(x,\xi^{(i)})-Q(x^\star,\xi^{(i)})|
\le DL_Q'.
$

\item[(B)] The Poisson operator is uniformly invertible:
\(
\sup_x\big\|\,(I-P_x+\mathbf 1p_x^\top)^{-1}\big\|_{\infty \to \infty}\le C_*<\infty
\) by Assumption~\ref{ass:invertibility}.

\item[(C)] \emph{Row-wise Lipschitz continuity of $P_x$.}
By Lemma~\ref{lem:lip barker} (Barker) or Lemma~\ref{lem:lipschitz-MH-nom} (MH),
for every $i \in \I$, it holds that 
\(
\sum_{j \in \I}|P_x(i,j)-P_{x'}(i,j)|
\le L_P\,\|x-x'\|
\)
with $L_P={L'_Q}/{w}$ for Barker (resp.\ $L_P={2L'_Q}/{w}$ for MH).
Equivalently, in the $\infty\to\infty$ operator norm,
\(
\|P_x-P_{x'}\|_{\infty\to\infty}\le L_P\,\|x-x'\|.
\)

\item[(D)] \emph{Lipschitz continuity of $p_x$ in $\ell_1$.} By Lemma~\ref{lem:target-distribution-lipschitz}, 
\(
\|p_x-p_{x'}\|_1 \le L_p\,\|x-x'\|
\)
with \(L_p:={2L_Q'}/{w}\). 
Moreover,
$\|\mathbf 1(p_x-p_{x'})^\top\|_{\infty\to\infty}=\|p_x-p_{x'}\|_1
\le L_p\|x-x'\|$.

We first bound $\displaystyle\sup_{i\in\mathcal I}|e_x(i)-e_{x'}(i)|$. By the definition of $e_x$ and properties (A), (D) above, we have 
\begin{align*}
\sup_{i\in\mathcal I}|e_x(i)-e_{x'}(i)|
&\le 2 \max_{i\in\mathcal I}
|Q(x,\xi^{(i)})-Q(x',\xi^{(i)})|+ \|p_x-p_{x'}\|_1
\max_{i\in\mathcal I}
|Q(x',\xi^{(i)})-Q(x^\star,\xi^{(i)})|\\
&\le(2L_Q'+DL_Q'L_p)\|x-x'\|. 
\end{align*}

Next, we bound $\sup_{i\in\mathcal I}|\widehat Z_x(i)-\widehat Z_{x'}(i)|$.  By the representation of the Poisson solution, 
\begin{align*}
\widehat Z_x(i)-\widehat Z_{x'}(i)
&=\underbrace{\sum_{j\in\mathcal I}
\bigl[(\mathrm{Id}-P_x+\mathbf1p_x^\top)^{-1}\bigr]_{ij}
\bigl(e_x(j)-e_{x'}(j)\bigr)}_{E}\\
&\quad+\underbrace{\sum_{j\in\mathcal I}
\Bigl[
(\mathrm{Id}-P_x+\mathbf1p_x^\top)^{-1}
-(\mathrm{Id}-P_{x'}+\mathbf1p_{x'}^\top)^{-1}
\Bigr]_{ij}e_{x'}(j)}_{F}.
\end{align*}

\noindent{a. We first bound the \(E\) term.} By the triangle inequality,
\begin{align*}
\left|
\sum_{j\in\mathcal I}
\bigl[(\mathrm{Id}-P_x+\mathbf1p_x^\top)^{-1}\bigr]_{ij}
\bigl(e_x(j)-e_{x'}(j)\bigr)
\right|
& \ \le \ 
\sum_{j\in\mathcal I}
\left|\bigl[(\mathrm{Id}-P_x+\mathbf1p_x^\top)^{-1}\bigr]_{ij}\right|
\,|e_x(j)-e_{x'}(j)|\\
&\ \le \
\left(\max_{k\in\mathcal I}\sum_{j\in\mathcal I}
\left|\bigl[(\mathrm{Id}-P_x+\mathbf1p_x^\top)^{-1}\bigr]_{kj}\right|\right)
\sup_{\ell\in\mathcal I}|e_x(\ell)-e_{x'}(\ell)|\\
&\ = \ 
\|(\mathrm{Id}-P_x+\mathbf1p_x^\top)^{-1}\|_{\infty\to\infty}
\sup_{\ell\in\mathcal I}|e_x(\ell)-e_{x'}(\ell)|\\
&\ \le \ C_*(2L_Q'+DL_Q'L_p)\|x-x'\|.
\end{align*}
Here, the equality uses
$\|A\|_{\infty\to\infty}=\max_k\sum_j|A_{kj}|$,
and the last inequality follows from property (B) and the preceding bound on $e_x-e_{x'}$.

\noindent{b. We next bound the \(F\) term.} 
We recall the resolvent identity 
\[
A^{-1}-B^{-1}=A^{-1}(B-A)B^{-1}
\ \ 
\text{with} \ \ A:=\mathrm{Id}-P_x+\mathbf1p_x^\top,\;
B:=\mathrm{Id}-P_{x'}+\mathbf1p_{x'}^\top.
\]
Then,
\begin{align*}
&\sup_{i\in\mathcal I}\left|
\left[
\Big((\mathrm{Id}-P_x+\mathbf1p_x^\top)^{-1}
-(\mathrm{Id}-P_{x'}+\mathbf1p_{x'}^\top)^{-1}\Big)e_{x'}
\right]_i\right|\\[2pt]
&\;=\Big\|
\Big((\mathrm{Id}-P_x+\mathbf1p_x^\top)^{-1}
-(\mathrm{Id}-P_{x'}+\mathbf1p_{x'}^\top)^{-1}\Big)e_{x'}
\Big\|_\infty\\[2pt]
&\;=\Big\|
(\mathrm{Id}-P_x+\mathbf1p_x^\top)^{-1}
\Big((\mathrm{Id}-P_{x'}+\mathbf1p_{x'}^\top)
-(\mathrm{Id}-P_x+\mathbf1p_x^\top)\Big)
(\mathrm{Id}-P_{x'}+\mathbf1p_{x'}^\top)^{-1}e_{x'}
\Big\|_\infty\\[2pt]
&\;=\Big\|
(\mathrm{Id}-P_x+\mathbf1p_x^\top)^{-1}
\big(P_x-P_{x'}+\mathbf1(p_{x'}-p_x)^\top\big)
(\mathrm{Id}-P_{x'}+\mathbf1p_{x'}^\top)^{-1}e_{x'}
\Big\|_\infty.
\end{align*}
The chain of equalities continues as
\[
\begin{aligned}
&\quad=
\max_k\left|
\sum_j\big[(\mathrm{Id}-P_x+\mathbf1p_x^\top)^{-1}\big]_{kj}
\Big[
\big(P_x-P_{x'}+\mathbf1(p_{x'}-p_x)^\top\big)
(\mathrm{Id}-P_{x'}+\mathbf1p_{x'}^\top)^{-1}e_{x'}
\Big]_j
\right|\\[2pt]
&\quad\le
\max_k\sum_j
\left|\big[(\mathrm{Id}-P_x+\mathbf1p_x^\top)^{-1}\big]_{kj}\right|
\Big\|
\big(P_x-P_{x'}+\mathbf1(p_{x'}-p_x)^\top\big)
(\mathrm{Id}-P_{x'}+\mathbf1p_{x'}^\top)^{-1}e_{x'}
\Big\|_\infty\\[2pt]
&\quad=
\big\|(\mathrm{Id}-P_x+\mathbf1p_x^\top)^{-1}\big\|_{\infty\to\infty}
\Big\|
\big(P_x-P_{x'}+\mathbf1(p_{x'}-p_x)^\top\big)
(\mathrm{Id}-P_{x'}+\mathbf1p_{x'}^\top)^{-1}e_{x'}
\Big\|_\infty\\[2pt]
&\quad\le
\big\|(\mathrm{Id}-P_x+\mathbf1p_x^\top)^{-1}\big\|_{\infty\to\infty}
\big\|P_x-P_{x'}+\mathbf1(p_{x'}-p_x)^\top\big\|_{\infty\to\infty}
\big\|(\mathrm{Id}-P_{x'}+\mathbf1p_{x'}^\top)^{-1}\big\|_{\infty\to\infty}
\|e_{x'}\|_\infty\\[2pt]
&\quad\le
\big\|(\mathrm{Id}-P_x+\mathbf1p_x^\top)^{-1}\big\|_{\infty\to\infty}
\big(\|P_x-P_{x'}\|_{\infty\to\infty}+\|p_{x'}-p_x\|_1\big)
\big\|(\mathrm{Id}-P_{x'}+\mathbf1p_{x'}^\top)^{-1}\big\|_{\infty\to\infty}
\|e_{x'}\|_\infty,
\end{aligned}
\]
where the last inequality follows from
\[
\|P_x-P_{x'}+\mathbf1(p_{x'}-p_x)^\top\|_{\infty\to\infty}
\le\|P_x-P_{x'}\|_{\infty\to\infty}+\|p_{x'}-p_x\|_1.
\]
It follows that the \(F\) term is bounded by
$2C_*^2DL_Q'(L_P+L_p)\|x-x'\|$ using properties (B)--(D) and
$\|e_{x'}\|_\infty\le2DL_Q'$, as proved in (I). Combining the terms \(E\) and \(F\) gives
\(
\sup_{i\in\mathcal I}|\widehat Z_x(i)-\widehat Z_{x'}(i)|
\le L_Z\|x-x'\|.
\)

Finally, we prove that $P_x \widehat{Z}_x (i)$ is  Lipschitz continuous through
\begin{align*}
\hspace{1cm} \Big|(P_x\widehat Z_x)(i)-(P_{x'}\widehat Z_{x'})(i)\Big|
&\le\sum_j|P_x(i,j)-P_{x'}(i,j)|\,|\widehat Z_x(j)|
+\sum_jP_{x'}(i,j)|\widehat Z_x(j)-\widehat Z_{x'}(j)|\\
&\le\left(\sum_j|P_x(i,j)-P_{x'}(i,j)|\right)
\sup_j|\widehat Z_x(j)|
+\sup_j|\widehat Z_x(j)-\widehat Z_{x'}(j)|\\
&\le\bigl(L_PB_Z+L_Z\bigr)\|x-x'\|. \hspace{6.3cm} \Box
\end{align*}
\end{proof}

We are now ready to present a proof for Proposition~\ref{prop:poisson-noV}.

\begin{proof}{Proof of Proposition~\ref{prop:poisson-noV}:}
We prove the following equivalent claim:
\[
\mathbb E\!\left[-2\sum_{k=0}^{n-1}\alpha_{k+1}e_{k+1}\right]
\le
2(L_Q'+L_f)\bigl(L_PB_Z+L_Z\bigr)
\sum_{k=0}^{n-1}\alpha_{k+1}^2
+4B_Z\alpha_1.
\]
Let $\mathcal F_k:=\sigma\!\big(\{(x_t,I_t):0\le t\le k\}\big)$. Recall the Poisson equation in Lemma~\ref{prop:2} has
\(
e_{k+1}
=\widehat Z_{x_k}(I_{k+1})-(P_{x_k}\widehat Z_{x_k})(I_{k+1}).
\)
Since $I_{k+1}\sim P_{x_k}(I_k,\cdot)$ conditionally on $\mathcal F_k$, \(
\mathbb{E}\!\left[\widehat Z_{x_k}(I_{k+1})\mid \mathcal F_k\right]=(P_{x_k}\widehat Z_{x_k})(I_k).
\) The tower property gives 
\[
\mathbb E\!\left[-2\sum_{k=0}^{n-1}\alpha_{k+1}e_{k+1}\right]
=-2\sum_{k=0}^{n-1}\mathbb E\!\left[
\alpha_{k+1}\bigl((P_{x_k}\widehat Z_{x_k})(I_k)
-(P_{x_k}\widehat Z_{x_k})(I_{k+1})\bigr)\right].
\tag{I}
\]
Add and subtract $(P_{x_{k+1}}\widehat Z_{x_{k+1}})(I_{k+1})$
inside (I) and split the sum into
{\footnotesize
\[
\underbrace{-2\sum_{k=0}^{n-1}\alpha_{k+1}
\bigl((P_{x_k}\widehat Z_{x_k})(I_k)
-(P_{x_{k+1}}\widehat Z_{x_{k+1}})(I_{k+1})\bigr)}_{A}
\;\underbrace{-2\sum_{k=0}^{n-1}\alpha_{k+1}
\bigl((P_{x_{k+1}}\widehat Z_{x_{k+1}})(I_{k+1})
-(P_{x_k}\widehat Z_{x_k})(I_{k+1})\bigr)}_{B}
\tag{II}
\]
}

For the \(A\) term in (II), we shift the index to obtain that
\(
A = -2\alpha_1(P_{x_0}\widehat Z_{x_0})(I_0)
+2\alpha_n(P_{x_n}\widehat Z_{x_n})(I_n) - 2\sum_{k=1}^{n-1}(\alpha_{k+1}-\alpha_k)
(P_{x_k}\widehat Z_{x_k})(I_k).
\)
Using $|(P_x\widehat Z_x)(i)|\le B_Z$
from Lemma~\ref{prop:2} and the nonincreasing step sizes, we have
\[
\mathbb E[A]
\le2(\alpha_1+\alpha_n)B_Z
+2\sum_{k=1}^{n-1}(\alpha_k-\alpha_{k+1})B_Z
=4B_Z\alpha_1.
\tag{III}
\]
For the \(B\) sum in (II), Lemma~\ref{prop:2} gives
\(
\left|(P_{x_{k+1}}\widehat Z_{x_{k+1}})(I_{k+1})
-(P_{x_k}\widehat Z_{x_k})(I_{k+1})\right|
\le\bigl(L_PB_Z+L_Z\bigr)\|x_{k+1}-x_k\|.
\)
Now recall that \(x_{k+1}=\Pi_X\!\left(x_{k}
-\alpha_{k+1} Z_{x_{k}}(I_{k+1})\right)\). Since \(x_{k}\in X\), we have
\(\Pi_X(x_{k})=x_{k}\). Then, by the
nonexpansiveness of the projection operator, we have
\(
\|x_{k+1}-x_{k}\|_1 = 
\left\|
\Pi_X\!\left(x_{k}
-\alpha_k Z_{x_{k}}(I_{k+1})\right)
-\Pi_X(x_{k})
\right\|_1
\le
\alpha_k\|Z_{x_{k}}(I_{k+1})\|_1
\le \alpha_k(L_f+L_Q'),
\)
where we use the bound $\|Z_x(i)\|\le L_f+L_Q'$
established in Lemma~\ref{prop:L-smooth}. It follows that
\[
\mathbb E[B] \le 2 \bigl(L_PB_Z+L_Z\bigr)
\sum_{k=0}^{n-1}\alpha_{k+1}\mathbb E\|x_{k+1}-x_k\| \le 2 (L_f+L_Q')\bigl(L_PB_Z+L_Z\bigr)
\sum_{k=0}^{n-1}\alpha_{k+1}^2 
\tag{IV}
\]

Finally, combining (III) and (IV) completes the proof. \hfill \(\Box\)


\end{proof}

\subsection{Proof of Theorem~\ref{thm:continuous_convergence}}
\begin{proof}{Proof:}

By nonexpansiveness of the Euclidean projection and that $x^\star\in\mathcal X$, we have
\[
\|x_k-x^\star\|^2 \le
\|x_{k-1}-\alpha_kZ_{x_{k-1}}(I_k)-x^\star\|^2
=\|x_{k-1}-x^\star\|^2
-2\alpha_k \cdot Z_{x_{k-1}}(I_k)^{\top}(x_{k-1}-x^\star)
+\alpha_k^2\|Z_{x_{k-1}}(I_k)\|^2.
\]
Recall, from the derivation before Proposition~\ref{prop:poisson-noV}, that 
\(
Z_{x_{k-1}}(I_k)^{\top}(x_{k-1}-x^\star)
\ge F_w(x_{k-1})-F_w(x^\star)+e_k.
\)
Together with $\|Z_{x_{k-1}}(I_k)\|\le L_f+L_Q'$ from Lemma~\ref{prop:L-smooth}, this gives
\(
2 \alpha_k \bigl(F_w(x_{k-1})-F_w^\star\bigr)
\le \|x_{k-1}-x^\star\|^2-\|x_k-x^\star\|^2
-2\alpha_ke_k+\alpha_k^2(L_f+L_Q')^2.
\)
To pass to the updated iterate, note that
$\|x_k-x_{k-1}\|\le\alpha_k(L_f+L_Q')$ and $F_w$ is
$(L_f+L_Q')$-Lipschitz on $\mathcal X$ by Lemma~\ref{prop:L-smooth}. Then,
\(
F_w(x_k)-F_w(x_{k-1})
\le(L_f+L_Q')\|x_k-x_{k-1}\|
\le\alpha_k(L_f+L_Q')^2.
\)
Combining these inequalities yields
\[
2 \alpha_k \bigl(F_w(x_k)-F_w^\star\bigr)
\le \|x_{k-1}-x^\star\|^2-\|x_k-x^\star\|^2
-2\alpha_ke_k+3\alpha_k^2(L_f+L_Q')^2.
\]
Summing this inequality over $k=1,\ldots,n$ gives
\begin{align*}
2\sum_{k=1}^n\alpha_k\bigl(F_w(x_k)-F_w^\star\bigr)
&\le
\sum_{k=1}^n
\bigl(\|x_{k-1}-x^\star\|^2-\|x_k-x^\star\|^2\bigr)
-2\sum_{k=1}^n\alpha_ke_k
+3(L_f+L_Q')^2\sum_{k=1}^n\alpha_k^2\\
&=
\|x_0-x^\star\|^2-\|x_n-x^\star\|^2
-2\sum_{k=1}^n\alpha_ke_k
+3(L_f+L_Q')^2\sum_{k=1}^n\alpha_k^2.
\end{align*}
Taking expectations and dropping the nonpositive term
$-\mathbb E[\|x_n-x^\star\|^2]$, we obtain
\[
\begin{aligned}
2\sum_{k=1}^n\alpha_k\mathbb E[F_w(x_k)-F_w^\star]
&\le
\mathbb E[\|x_0-x^\star\|^2]
+\mathbb E\!\left[-2\sum_{k=1}^n\alpha_ke_k\right]
+3(L_f+L_Q')^2\sum_{k=1}^n\alpha_k^2\\
&\le
\mathbb E[\|x_0-x^\star\|^2]+C_0
+\bigl(C_1+3(L_f+L_Q')^2\bigr)\sum_{k=1}^n\alpha_k^2,
\end{aligned}
\]
where the last inequality follows from
Proposition~\ref{prop:poisson-noV}. But the convexity of $F_{w}$ and the definition of $\bar x_n$ imply that 
\(
\mathbb E\!\left[F_{w}(\bar x_n)-F_{w}(x^\star)\right]
\le
\frac{1}{\sum_{k=1}^n \alpha_{k}}\sum_{k=1}^n \alpha_{k}\,
\mathbb E\!\left[F_{w}(x_k)-F_{w}(x^\star)\right].
\)
Applying the upper bound for \(\sum_{k=1}^n\alpha_k\mathbb E[F_w(x_k)-F_w^\star]\) we just derived above produces
\[
\mathbb E\!\left[F_{w}(\bar x_n)-F_{w}(x^\star)\right]
\le \frac{\mathbb E[\|x_0-x^\star\|^2]+C_0
+\bigl(C_1+3(L_f+L'_Q)^2\bigr)\sum_{k=1}^n\alpha_k^2}{2 \sum_{k=1}^n \alpha_{k}}.
\]
For the step-size average \(\bar{x}_n^{\text{ss}} := (\sum_{k=1}^n\alpha_k x_k)/(\sum_{k=1}^n \alpha_k)\) with $\alpha_k=O(1/\sqrt{k})$, we have
$\sum_{k=1}^n\alpha_k=\Theta(\sqrt n)$ and
$\sum_{k=1}^n\alpha_k^2=O(1+\log n)$. Since $L_P=O(1/w)$, $B_Z=O(C_*)$,
$L_Z=O\!\left(C_*+\frac{C_*^2}{w}\right)$, $C_1=O\!\left(C_*+\frac{C_*^2}{w}\right)$, and $C_0=O(C_*)$, plugging these bounds into the above inequality leads to the claimed convergence rate. 

{Furthermore, for the linear averages} $\bar x_n^{\mathrm{lin}}
:=({\sum_{k=1}^n kx_k})/({\sum_{k=1}^n k}),$ multiplying the preceding
one-step inequality by $k/(2\alpha_k)$ gives
\[
k\bigl(F_w(x_k)-F_w^\star\bigr) \le \frac{k}{2\alpha_k}
\bigl(\|x_{k-1}-x^\star\|^2-\|x_k-x^\star\|^2\bigr) -ke_k+\frac32k\alpha_k(L_f+L_Q')^2. 
\]
We bound the three terms after summing over \(k\). First, since $\alpha_k=a/\sqrt{k}$, the sequence $k/\alpha_k$
is increasing. Summation by parts yields
\[
\begin{aligned}
\sum_{k=1}^n\frac{k}{2\alpha_k}
\bigl(\|x_{k-1}-x^\star\|^2-\|x_k-x^\star\|^2\bigr) &=
\frac{\|x_0-x^\star\|^2}{2\alpha_1}
-\frac{n\|x_n-x^\star\|^2}{2\alpha_n}
+\sum_{k=1}^{n-1}
\left(\frac{k+1}{2\alpha_{k+1}}-\frac{k}{2\alpha_k}\right)
\|x_k-x^\star\|^2\\
&\le
\frac{D^2}{2\alpha_1}
+D^2\sum_{k=1}^{n-1}
\left(\frac{k+1}{2\alpha_{k+1}}-\frac{k}{2\alpha_k}\right)
=\frac{D^2n}{2\alpha_n},
\end{aligned}
\]
where we used $\|x_k-x^\star\|\le D$ and dropped the
nonpositive terminal term. Second, the Poisson equation and the tower property give 
\(
\mathbb E\sum_{k=1}^n ke_k
=
\mathbb E\sum_{k=1}^n k
\bigl((P_{x_{k-1}}\widehat Z_{x_{k-1}})(I_{k-1})
-(P_{x_{k-1}}\widehat Z_{x_{k-1}})(I_k)\bigr).
\)
Shifting the index and collecting terms, we obtain
\[
\begin{aligned}
\mathbb E\sum_{k=1}^n ke_k
=\mathbb E\Bigl[
(P_{x_0}\widehat Z_{x_0})(I_0)
-n(P_{x_{n-1}}\widehat Z_{x_{n-1}})(I_n) +\sum_{k=1}^{n-1}(P_{x_{k-1}}\widehat Z_{x_{k-1}})(I_k)
\\ +\sum_{k=1}^{n-1}(k+1)
\bigl((P_{x_k}\widehat Z_{x_k})(I_k)
-(P_{x_{k-1}}\widehat Z_{x_{k-1}})(I_k)\bigr)
\Bigr].
\end{aligned}
\]
Then, Lemma~\ref{prop:2} and
$\|x_k-x_{k-1}\|\le\alpha_k(L_f+L_Q')$ imply
\(
\left|\mathbb E\sum_{k=1}^n ke_k\right|
\le2nB_Z
+(L_f+L_Q')\bigl(L_PB_Z+L_Z\bigr)
\sum_{k=1}^{n-1}(k+1)\alpha_k.
\)
Combining these bounds and applying convexity, we conclude that
\[
\begin{aligned}
\mathbb E[F_w(\bar x_n^{\mathrm{lin}})]-F_w^\star
&\le\frac{2}{n(n+1)}
\sum_{k=1}^n k\,\mathbb E[F_w(x_k)-F_w^\star]\\
&\le\frac{2}{n(n+1)}\Bigg[
\frac{D^2n}{2\alpha_n}
+2nB_Z
+\frac32(L_f+L_Q')^2\sum_{k=1}^n k\alpha_k\\
&\qquad\qquad
+(L_f+L_Q')\bigl(L_PB_Z+L_Z\bigr)
\sum_{k=1}^{n-1}(k+1)\alpha_k
\Bigg].
\end{aligned}
\]
For $\alpha_k=a/\sqrt{k}$,
$n/\alpha_n=O(n^{3/2})$,
$\sum_{k=1}^n k\alpha_k=O(n^{3/2})$, and
$\sum_{k=1}^{n-1}(k+1)\alpha_k=O(n^{3/2})$.
Using $B_Z=O(C_*)$ and
$L_PB_Z+L_Z
=O(C_*+C_*^2/w)$, we arrive at
\[
\hspace{4.5cm} \mathbb E[F_w(\bar x_n^{\mathrm{lin}})]-F_w^\star
=
O\!\left(
\left(C_*+\frac{C_*^2}{w}\right)n^{-1/2}
\right). \hspace{4.5cm} \Box
\]

\end{proof}

\subsection{Proof of Proposition~\ref{prop:adaptive-w-1over3}}

\begin{proof}{Proof of Proposition~\ref{prop:adaptive-w-1over3}:}
Fix $x^\star\in\argmin_{x\in\mathcal X}F(x)$ and define \(e_k:=e_{x_{k-1},w_{k-1}}(I_k)\) with 
\[
e_{x,w}(i):=
Q(x,\xi^{(i)})-Q(x^\star,\xi^{(i)})
-\sum_jp_{x,w}(j)
\bigl(Q(x,\xi^{(j)})-Q(x^\star,\xi^{(j)})\bigr)
\]
for all \(i \in \I\). 
Let $\widehat Z_{x,w}$ denote the corresponding centered solution to the 
Poisson equation such that \(\widehat Z_{x,w} - P_{x, w} \widehat Z_{x,w} = e_{x, w}\) and \(p_{x,w}^{\top} \widehat Z_{x,w} = 0\), and \(p_{x, w}\) denote the stationary distribution with respect to incumbent \(x\) and temperature parameter \(w\) with \(p_{x,w}(i) := {\exp\left(Q(x,\xi^{(i)})/w\right)}/
{\sum_{j\in\mathcal I}\exp\left(Q(x,\xi^{(j)})/w\right)}\). We first derive a useful lemma. Note that the Lipschitz continuity of \(p_{x, w}\) in \((x, w)\) holds for a general state space $\mathcal I$.
\begin{lemma}[Lipschitz continuity in the iterate and temperature]
\label{lem:kernel-temperature}
For $Q(\cdot, \cdot)$ satisfying the Lipschitz continuity in
Lemma~\ref{lem:recourse_lipschitz_smoothing},
suppose that the proposal kernel is fixed and the acceptance
distribution is either MH or Barker. Then, for all $x,y\in\mathcal X$ and $w,v>0$, it holds that
\[
\|p_{x,w}-p_{y,v}\|_1
\le
\frac{2L_Q'}{\max\{w,v\}}\|x-y\|
+2L_\xi'\operatorname{diam}(\Xi)
\left|\frac1w-\frac1v\right|.
\]
Moreover, the MH transition kernel satisfies
\[
\|P_{x,w}-P_{y,v}\|_{\infty\to\infty}
\le
\frac{4L_Q'}{\max\{w,v\}}\|x-y\|
+2L_\xi'\operatorname{diam}(\Xi)
\left|\frac1w-\frac1v\right|.
\]
For the Barker transition kernel, the bound improves to
\[
\|P_{x,w}-P_{y,v}\|_{\infty\to\infty}
\le
\frac{L_Q'}{\max\{w,v\}}\|x-y\|
+\frac{L_\xi'\operatorname{diam}(\Xi)}2
\left|\frac1w-\frac1v\right|.
\]
\end{lemma}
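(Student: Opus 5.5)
We prove the three bounds in order: first the one for $p_{x,w}$ by interpolation in $(x,w)$, then the two kernel bounds by Lipschitz estimates of the acceptance functions, entry by entry as in Lemmas~\ref{lem:lip barker}--\ref{lem:lipschitz-MH-nom}.

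\textbf{Target distribution.} Write $p_{x,w}(i)\propto\exp(\beta Q_i(x))$ with $\beta:=1/w$. By the triangle inequality,
\[
\|p_{x,w}-p_{y,v}\|_1\le\|p_{x,u}-p_{y,u}\|_1+\|p_{\cdot,w}-p_{\cdot,v}\|_1,
\]
where $u$ is a common temperature chosen to realize the factor $1/\max\{w,v\}$, and the second term is taken at a fixed incumbent.

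For the $x$-part, we reuse the interpolation in the proof of Lemma~\ref{lem:target-distribution-lipschitz}. Along $t\mapsto r_t$, the derivative of the density is $\beta r_t(\delta-\mathbb E_{r_t}\delta)$ with $|\delta|\le L_Q'\|x-y\|$, so its $L^1$ norm is at most $2\beta L_Q'\|x-y\|$. Since $1/\max\{w,v\}=\min\{\beta_w,\beta_v\}$, we should route the path so that the $x$-move happens at the smaller inverse temperature: first change $x$ at $\beta_{\min}$, then change the temperature at the fixed endpoint.

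For the temperature part, fix $x$ and differentiate $p_{x,1/\beta}(i)\propto e^{\beta Q_i}$ in $\beta$. The derivative is $p(i)\bigl(Q_i-\mathbb E_pQ\bigr)$, and $|Q_i-\mathbb E_pQ|\le L_\xi'\operatorname{diam}(\Xi)$ by Lemma~\ref{lem:recourse_lipschitz_smoothing}(2). Integrating over $\beta$ between $1/w$ and $1/v$ gives at most $2L_\xi'\operatorname{diam}(\Xi)|1/w-1/v|$. The factor $2$ is conservative: the $L^1$ norm of a centered density times a deviation is bounded by twice the sup-deviation.

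\textbf{Kernels.} For MH, write $A=g(\alpha)$ with $g(t)=\min\{1,e^t\}$, which is $1$-Lipschitz, and
\[
\alpha_{x,\beta}(i,j)=\beta\bigl(Q_j(x)-Q_i(x)\bigr)+\log\frac{q(i\mid j)}{q(j\mid i)}.
\]
Then
\[
|\alpha_{x,w}-\alpha_{y,v}|\le\min\{\beta_w,\beta_v\}\,2L_Q'\|x-y\|+|\beta_w-\beta_v|\,|Q_j-Q_i|,
\]
again choosing the intermediate point $(y,w)$ or $(x,v)$ so that the $x$-difference is multiplied by the smaller $\beta$. Here $|Q_j-Q_i|\le L_\xi'\operatorname{diam}(\Xi)$. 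Summing over row $i$ exactly as in Lemma~\ref{lem:lipschitz-MH-nom}, the off-diagonal entries weighted by $q(j\mid i)$ plus the diagonal entry, doubles the bound. This gives $4L_Q'/\max\{w,v\}$ for the $x$-part and a temperature part $2\cdot\sup_{i,j}|Q_j-Q_i|\cdot|1/w-1/v|$. The $\|\cdot\|_{\infty\to\infty}$ norm is the maximum row $\ell_1$ sum, so this row bound is the required kernel bound.

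For Barker the acceptance is the logistic function $\sigma$, and $\sigma'\le 1/4$. This replaces the factor $1$ by $1/4$ and yields $L_Q'/\max\{w,v\}$ and $\tfrac12L_\xi'\operatorname{diam}(\Xi)|1/w-1/v|$.

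\textbf{Main obstacle.} The delicate point is the MH temperature constant. The naive row sum gives $2\cdot 1\cdot L_\xi'\operatorname{diam}(\Xi)$, which matches the claimed bound. The cruder estimate $|Q_j-Q_i|\le 2L_\xi'\operatorname{diam}$ would lose a factor of $2$, so we must use $|Q_j(x)-Q_i(x)|\le L_\xi'\|\xi^{(j)}-\xi^{(i)}\|\le L_\xi'\operatorname{diam}(\Xi)$ directly. The same care with the intermediate point is needed to obtain $\max\{w,v\}$ rather than $\min\{w,v\}$ in the $x$-terms.
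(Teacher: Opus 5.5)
Your proposal is correct and is essentially the paper's argument. The paper also splits through an intermediate point: it bounds both routes, via $(y,w)$ and via $(x,v)$, and keeps the better one, which is exactly your choice of moving $x$ at the smaller inverse temperature. It then differentiates in $1/w$ for the temperature term, bounds the log-acceptance ratio by $\frac{2L_Q'}{\max\{w,v\}}\|x-y\|+L_\xi'\operatorname{diam}(\Xi)\left|\frac1w-\frac1v\right|$, and defers to the row-sum argument of Lemmas~\ref{lem:lip barker}--\ref{lem:lipschitz-MH-nom}; you just write out that omitted row-sum step, and your centered-deviation estimate in the temperature part is even a factor of $2$ tighter than required.
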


\begin{proof}{Proof of Lemma~\ref{lem:kernel-temperature}:}
We already proved in Lemma~\ref{lem:target-distribution-lipschitz} that 
\(
\|p_{x,w}-p_{y,w}\|_1 \le ({2L'_Q}/w)\|x-y\|.
\)
Fix a state $i_0 \in \I$. Since the recourse costs have bounded variation across scenarios,
differentiation under the integral is justified. Differentiating
with respect to the inverse temperature $w^{-1}$ gives
\begin{align*}
\int_{\mathcal I}
\left|\frac{\partial p_{y,w}(i)}{\partial(1/w)}\right|\nu(di) 
& =
\int_{\mathcal I}p_{y,w}(i)
\left|
Q(y,\xi^{(i)})
-\int_{\mathcal I}p_{y,w}(j)Q(y,\xi^{(j)})\,\nu(dj)
\right|\nu(di)\\
&=
\int_{\mathcal I}p_{y,w}(i)
\left|
Q(y,\xi^{(i)})-Q(y,\xi^{(i_0)})
-\int_{\mathcal I}p_{y,w}(j)
\bigl(Q(y,\xi^{(j)})-Q(y,\xi^{(i_0)})\bigr)\,\nu(dj)
\right|\nu(di)\\
&\le
\int_{\mathcal I}p_{y,w}(i)
\left|Q(y,\xi^{(i)})-Q(y,\xi^{(i_0)})\right|\,\nu(di)\\
&\qquad\quad+
\int_{\mathcal I}p_{y,w}(i)
\int_{\mathcal I}p_{y,w}(j)
|Q(y,\xi^{(j)})-Q(y,\xi^{(i_0)})|\,\nu(dj)\,\nu(di)\\
&=
2\int_{\mathcal I}p_{y,w}(i)
\left|Q(y,\xi^{(i)})-Q(y,\xi^{(i_0)})\right|\,\nu(di)\\
&\le
2\sup_{i\in\mathcal I}
\left|Q(y,\xi^{(i)})-Q(y,\xi^{(i_0)})\right|
\int_{\mathcal I}p_{y,w}(i)\,\nu(di) =
2\sup_{i\in\mathcal I}
\left|Q(y,\xi^{(i)})-Q(y,\xi^{(i_0)})\right|.
\end{align*}
Then, integrating between $1/w$ and $1/v$ yields
\[
\|p_{y,w}-p_{y,v}\|_1 \le
2\sup_{i\in\mathcal I}
|Q(y,\xi^{(i)})-Q(y,\xi^{(i_0)})| \cdot \left|\frac1w-\frac1v\right| \le
2L_\xi'\operatorname{diam}(\Xi)
\left|\frac1w-\frac1v\right|.
\]
In addition, 
$
\|p_{x,w}-p_{y,v}\|_1 \le \|p_{x,w}-p_{y,w}\|_1
+\|p_{y,w}-p_{y,v}\|_1 \leq (2L'_Q/w)\|x-y\| + 2L_\xi'\operatorname{diam}(\Xi)
\left|1/w-1/v\right|
$ and $\|p_{x,w}-p_{y,v}\|_1 \le \|p_{x,w}-p_{x,v}\|_1
+\|p_{x,v}-p_{y,v}\|_1 \leq (2L'_Q/v)\|x-y\| + 2L_\xi'\operatorname{diam}(\Xi)
\left|1/w-1/v\right|$. Thus, 
\(
\|p_{x,w}-p_{y,v}\|_1 \le
({2L_Q'}/{\max\{w,v\}})\|x-y\|
+2L_\xi'\operatorname{diam}(\Xi)
\left|1/w - 1/v\right|.
\)

We next bound the transition kernels and the proof is similar to those of Lemmas~\ref{lem:lip barker} and~\ref{lem:lipschitz-MH-nom}. Write the logarithmic acceptance ratio as $\alpha_{x,w}(i,j)
:=
\frac{Q(x,\xi^{(j)})-Q(x,\xi^{(i)})}{w}
+\log q(i\mid j)-\log q(j\mid i).$ Lipschitz continuity of $Q$ therefore gives $|\alpha_{x,w}(i,j)-\alpha_{y,v}(i,j)|
\le
({2L_Q'}/{\max\{w,v\}})\|x-y\|
+L_\xi'\operatorname{diam}(\Xi)
\left|1/w-1/v\right|.$  The remaining proof is the same, so we will omit it here.
\hfill \(\Box\)
\end{proof}

\paragraph{Proof of Proposition~\ref{prop:adaptive-w-1over3} (continued):}
Furthermore, we adapt the same resolvent argument to $\sup_i\big|
(P_{x,w}\widehat Z_{x,w})(i)
-(P_{y,v}\widehat Z_{y,v})(i)\big|$. The definition of $e_{x,w}$ and
$|Q(y,\xi^{(i)})-Q(x^\star,\xi^{(i)})|\le DL_Q'$ imply (i) $\|e_{x,w}\|_\infty\le2DL_Q'$ and (ii) $\|e_{x,w}-e_{y,v}\|_\infty \le 2 L_Q'\|x-y\|+DL_Q'\|p_{x,w}-p_{y,v}\|_1$, thus we have  for 
\begin{align*}
\widehat Z_{x,w}-\widehat Z_{y,v} = \ & (\mathrm{Id}-P_{x,w}+\mathbf1p_{x,w}^{\top})^{-1}
(e_{x,w}-e_{y,v})\\
& +\Big((\mathrm{Id}-P_{x,w}+\mathbf1p_{x,w}^{\top})^{-1} - 
(\mathrm{Id}-P_{y,v}+\mathbf1p_{y,v}^{\top})^{-1}\Big)e_{y,v} \\
= \ & (\mathrm{Id}-P_{x,w}+\mathbf1p_{x,w}^{\top})^{-1}
(e_{x,w}-e_{y,v})\\
& +(\mathrm{Id}-P_{x,w}+\mathbf1p_{x,w}^{\top})^{-1}
\bigl(P_{x,w}-P_{y,v} +\mathbf1(p_{y,v}-p_{x,w})^\top\bigr) \cdot
(\mathrm{Id}-P_{y,v}+\mathbf1p_{y,v}^{\top})^{-1}e_{y,v},
\end{align*}
where the second equality uses the resolvent identity \(A^{-1} - B^{-1} = A^{-1}(B-A)B^{-1}\) with \(A := \mathrm{Id}-P_{x,w}+\mathbf1p_{x,w}^{\top}\) and \(B := \mathrm{Id}-P_{y,v}+\mathbf1p_{y,v}^{\top}\). 
It follows from Assumption~\ref{ass:invertibility} that 
\(
\|\widehat Z_{x,w}-\widehat Z_{y,v}\|_\infty \le C_*\|e_{x,w}-e_{y,v}\|_\infty +2C_*^2DL_Q' \bigl(\|P_{x,w}-P_{y,v}\|_{\infty\to\infty}
+\|p_{x,w}-p_{y,v}\|_1\bigr).
\) 
Finally, the Poisson equation gives
$P_{x,w}\widehat Z_{x,w}=\widehat Z_{x,w}-e_{x,w}$. Hence,
\begin{align*}
& \ \sup_i\big|
(P_{x,w}\widehat Z_{x,w})(i)
-(P_{y,v}\widehat Z_{y,v})(i)\big| \\
\le \ & \ \|e_{x,w}-e_{y,v}\|_\infty +\| \widehat Z_{x,w}-\widehat Z_{y,v}\|_\infty   \\
\le \ & \  (C_*+1)\|e_{x,w}-e_{y,v}\|_\infty
+2C_*^2DL_Q'
\bigl(\|P_{x,w}-P_{y,v}\|_{\infty\to\infty}
+\|p_{x,w}-p_{y,v}\|_1\bigr)  \\
\le \ & \ 2(C_*+1)L_Q'\|x-y\|
+\bigl(C_*+1+2C_*^2\bigr)DL_Q'
  \|p_{x,w}-p_{y,v}\|_1
+2C_*^2DL_Q'
  \|P_{x,w}-P_{y,v}\|_{\infty\to\infty}
\\
\le \ & \ 
C_*^2\left[
\bigl(4L_Q'+16D(L_Q')^2\bigr)
\frac{\|x-y\|}{\min\{w,v\}}
+12DL_Q'L_\xi'\operatorname{diam}(\Xi)
\left|\frac1w-\frac1v\right|
\right],
\end{align*}
where we use $C_*\ge1$ and
$\min\{w,v\}\le1$ for sufficiently large $n$ considered. We now adapt the proof of Theorem~\ref {thm:continuous_convergence} with
$e_k:=e_{x_{k-1},w_{k-1}}(I_k)$. We obtain similarly that
\begin{align*}
 \mathbb E\sum_{k=1}^n ke_k
=\mathbb E\Bigl[
&(P_{x_0,w_0}\widehat Z_{x_0,w_0})(I_0)
-n(P_{x_{n-1},w_{n-1}}\widehat Z_{x_{n-1},w_{n-1}})(I_n) +\sum_{k=1}^{n-1}
(P_{x_{k-1},w_{k-1}}\widehat Z_{x_{k-1},w_{k-1}})(I_k)\\
&+\sum_{k=1}^{n-1}(k+1)
\bigl(
(P_{x_k,w_k}\widehat Z_{x_k,w_k})(I_k)
-(P_{x_{k-1},w_{k-1}}\widehat Z_{x_{k-1},w_{k-1}})(I_k)
\bigr)
\Bigr].   
\end{align*}
Collect the results to use. By Lemma~\ref{prop:2},
$\sup_i|(P_{x,w}\widehat Z_{x,w})(i)|=\mathcal O(C_*)$ and we just proved a bound for $\sup_i\big|
(P_{x,w}\widehat Z_{x,w})(i)
-(P_{y,v}\widehat Z_{y,v})(i)\big| $ :
\[
\sup_i\bigl|
(P_{x_k,w_k}\widehat Z_{x_k,w_k})(i)
-(P_{x_{k-1},w_{k-1}}\widehat Z_{x_{k-1},w_{k-1}})(i)
\bigr| = \mathcal O\!\left(
C_*^2\left[
\frac{\|x_k-x_{k-1}\|}{w_k}
+\left|\frac1{w_k}-\frac1{w_{k-1}}\right|
\right]\right),
\]
where we also use $w_k\le w_{k-1}$. It follows from $\|x_k-x_{k-1}\|\le\alpha_k(L_f+L_Q')$ that 
\[
\left|\mathbb E\sum_{k=1}^nke_k\right| = 2n \mathcal O(C_*) + \mathcal O\!\left(
C_*^2\sum_{k=1}^{n-1}(k+1)
\left[
\frac{\alpha_k}{w_k}
+\left|\frac1{w_k}-\frac1{w_{k-1}}\right|
\right]\right) 
\]
We next account for the approximation error. {\color{black}  
By the definition of $e_k$ and the
subgradient inequality, we have 
\begin{align*}
Z_{x_{k-1}}(I_k)^\top(x_{k-1}-x^\star)
&\ge
f(x_{k-1})-f(x^\star)
+\sum_jp_{x_{k-1},w_{k-1}}(j)
\bigl(Q(x_{k-1},\xi^{(j)})-Q(x^\star,\xi^{(j)})\bigr)+e_k\\
&\ge
F(x_{k-1})-F^\star
-\left[
\max_i Q(x_{k-1},\xi^{(i)})
-\sum_jp_{x_{k-1},w_{k-1}}(j)
Q(x_{k-1},\xi^{(j)})
\right]+e_k .
\end{align*}
Furthermore, because $\Delta_* \leq \widetilde \Delta_k$ for all sufficiently large $k$, there exists a \(k_0 \in \mathbb{N}_+\) such that $p_{x_{k-1},w_{k-1}}
\bigl(\mathcal I\setminus I^\star(x_{k-1})\bigr)
\le
\frac{|\mathcal I|-|I^\star(x_{k-1})|}
{|I^\star(x_{k-1})|}
e^{-\Delta_\star/w_{k-1}}
\le
|\mathcal I|e^{-\Delta_\star/w_{k-1}}$ for all \(k \geq k_0\). Recall that
$\Delta:=\sup_x\max_{i,j}|Q(x,\xi^{(i)})-Q(x,\xi^{(j)})|<\infty$, so for finitely many $k<k_0$, we use the crude bound $\max_i Q(x_{k-1},\xi^{(i)})
-\sum_j p_{x_{k-1},w_{k-1}}(j)Q(x_{k-1},\xi^{(j)})
\le \Delta,$ while for all $k \ge k_0$, 
we have $Z_{x_{k-1}}(I_k)^\top(x_{k-1}-x^\star)
\ge
F(x_{k-1})-F^\star
-\Delta|\mathcal I|e^{-\Delta_\star/w_{k-1}}+e_k$.  Following the projection argument as in the proof of 
Theorem~\ref{thm:continuous_convergence}, for $k\ge k_0$, we obtain
\[
2\alpha_k(F(x_k)-F^\star)
\le
\|x_{k-1}-x^\star\|^2-\|x_k-x^\star\|^2
+3\alpha_k^2(L_f+L_Q')^2
-2\alpha_ke_k
+2\alpha_k\Delta|\mathcal I|e^{-\Delta_\star/w_{k-1}}.
\]
For the finitely many $k<k_0$, the same argument with the crude bound
above contributes only a constant ${O}(1)$ independent of $n$. 
}
We multiply the above inequality by $k/(2\alpha_k)$ on both sides and apply
convexity. In the proof of Theorem~\ref{thm:continuous_convergence}, we have shown that $
\sum_{k=1}^n({k}/({2\alpha_k}))
\bigl(\|x_{k-1}-x^\star\|^2-\|x_k-x^\star\|^2\bigr) \le
({D^2n})/(2\alpha_n)$. Therefore, 
{\color{black}
\begin{align*}
\mathbb E[F(\bar x_n)]-F^\star
&\le
\frac{2}{n(n+1)}
\sum_{k=1}^n k\,\mathbb E[F(x_k)-F^\star]\\
&\le
\frac{2}{n(n+1)}
\Bigg[
\frac{D^2n}{2\alpha_n}
+\frac32(L_f+L_Q')^2\sum_{k=1}^n k\alpha_k
+\left|\mathbb E\sum_{k=1}^nke_k\right|
+\Delta|\mathcal I|
\sum_{k=k_0}^n k e^{-\Delta_\star/w_{k-1}} + {O}(1) 
\Bigg].
\end{align*}
Now let $\alpha_k=a/\sqrt{k}$ and
$w_k=(2\Delta_\star)/
(\log(k+2)+2\log|\mathcal I|)$. Then
$\alpha_k/w_k
=O(k^{-1/2}(\log k+\log|\mathcal I|))$ and
$|1/w_k-1/w_{k-1}|=O(k^{-1})$. Hence,
\[
\left|\mathbb E\sum_{k=1}^nke_k\right|
=
O\left(
C_*n+C_*^2n^{3/2}
(\log n+\log|\mathcal I|)
\right).
\]
Moreover,
$|\mathcal I|e^{-\Delta_\star/w_{k-1}}
=
(k+1)^{-1/2},$ so
$\Delta|\mathcal I|
\sum_{k=k_0}^n k e^{-\Delta_\star/w_{k-1}}
=O(n^{3/2})$.
Since $n/\alpha_n=O(n^{3/2})$ and
$\sum_{k=1}^n k\alpha_k=O(n^{3/2})$, it follows that
\[
\hspace{1cm}
0\le
\mathbb E[F(\bar x_n)]-F^\star
\le
O\left(
\frac{1+C_*^2(\log n+\log|\mathcal I|)}{\sqrt n}
+\frac{C_*}{n}
\right)
=
O\left(
\frac{1+C_*^2\log(n|\mathcal I|)}{\sqrt n}
\right). \hspace{1cm} \Box
\]

}

\end{proof}

\subsection{General Mellowmax Approximation} \label{apx-prop:continuous-mellowmax}
\begin{proposition}\label{prop:continuous-mellowmax}
Denote by $(\mathcal I,d_{\mathcal I})$ a compact metric space with diameter \(D_{\I} := \sup\{d_{\I}(I,J): I, J \in \I \}\) for the states. Suppose that there exists a constant $L_\I<\infty$, such that
\(
\left|
\widehat Q_x(I) - \widehat Q_x(J)
\right|
\leq
L_\I \, d_{\I}(I,J)
\)
for all \(x \in \X\) and \(I, J \in \I\). Suppose further that
there exist constants $c_\nu>0$ and $d_I>0$ such that
\(
\nu\bigl(B_{\mathcal I}(I,r)\bigr)
\ge
c_\nu\left({r}/{D_\I}\right)^{d_I}
\) for all \(I \in \I\) and \(0<r\le D_\I\). 
Then, for every $x\in\mathcal X$, it holds that
\[
0
\le
\sup_{I\in\mathcal I}\widehat Q(x,\xi_I)
-
\mathcal M_{w}^{\nu}(x)
\le
\inf_{0<r\le D_\I}
\left\{
L_Ir
+
w\log\frac{1}{c_\nu}
+
d_Iw\log\frac{D_\I}{r}
\right\}.
\]
In particular, if $d_Iw/L_\I\le D_\I$ (which always holds for a sufficiently small $w$), then
\[
\sup_{I\in\mathcal I}\widehat  Q(x,\xi_I)
-
\mathcal M_{w}^{\nu}(x)
\le
w \left ( \log\frac{1}{c_\nu}
+
d_I
\left[
1+\log\left(
\frac{L_\I D_\I}{d_I w}
\right)
\right] \right).
\]
\end{proposition}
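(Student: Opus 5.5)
The argument is the standard Laplace-type lower bound for a log-integral-exp: bound the integral from below by its contribution from a small ball around a near-maximizer, and use the volume-growth condition on \(\nu\) to control that ball's mass.

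\textbf{Step 1 (upper bound on \(\mathcal M_w^\nu\)).} Write \(\widehat Q^\star(x):=\sup_{I\in\I}\widehat Q_x(I)\). The left inequality follows from \(\exp\{\widehat Q_x(I)/w\}\le \exp\{\widehat Q^\star(x)/w\}\) for every \(I\). Because \(\nu\) is a probability measure (normalized as in the statement, so \(\nu(\I)=1\)), integrating gives
\[
\mathcal M_w^\nu(x)\le w\log \exp\{\widehat Q^\star(x)/w\}=\widehat Q^\star(x).
\]

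\textbf{Step 2 (lower bound).} Fix \(r\in(0,D_\I]\). By Lipschitz continuity of \(I\mapsto\widehat Q_x(I)\) on the compact space \(\I\), the supremum is attained at some \(I^\star\). If continuity is not assumed, I would take an \(\epsilon\)-maximizer and let \(\epsilon\to0\) at the end. On the ball \(B_\I(I^\star,r)\) we then have \(\widehat Q_x(J)\ge \widehat Q^\star(x)-L_\I r\). Hence
\[
\int_\I e^{\widehat Q_x/w}\,d\nu\ \ge\ \nu\bigl(B_\I(I^\star,r)\bigr)\,e^{(\widehat Q^\star(x)-L_\I r)/w}\ \ge\ c_\nu (r/D_\I)^{d_I}\,e^{(\widehat Q^\star(x)-L_\I r)/w}.
\]
Taking \(w\log\) of both sides gives
\[
\mathcal M_w^\nu(x)\ \ge\ \widehat Q^\star(x)-L_\I r-w\log(1/c_\nu)-d_Iw\log(D_\I/r).
\]
Since \(r\) was arbitrary, taking the infimum over \(r\in(0,D_\I]\) yields the first display.

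\textbf{Step 3 (explicit bound).} The function \(r\mapsto L_\I r+d_Iw\log(D_\I/r)\) is convex on \((0,\infty)\). Its minimizer is \(r^\star=d_Iw/L_\I\), which is admissible exactly when \(d_Iw/L_\I\le D_\I\). Substituting \(r^\star\) gives
\[
d_Iw+w\log(1/c_\nu)+d_Iw\log\bigl(L_\I D_\I/(d_Iw)\bigr),
\]
which matches the stated formula.

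There is no real obstacle here. The only delicate point is attainment of the supremum, which Lipschitz continuity plus compactness handles, with the \(\epsilon\)-maximizer approach as a fallback.
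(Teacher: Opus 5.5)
Your proposal is correct and follows the paper's proof step for step. Both argue by attaining the supremum via compactness and Lipschitz continuity, bounding \(\mathcal M_w^\nu(x)\le\sup_{I}\widehat Q_x(I)\) using \(\nu(\I)=1\), bounding from below by restricting the integral to \(B_\I(I^\star,r)\) and applying the volume-growth condition, and finally substituting \(r^\star=d_Iw/L_\I\). You were right to state the normalization \(\nu(\I)=1\) explicitly: the left inequality depends on it, since a merely finite \(\nu\) adds a \(w\log\nu(\I)\) term. The paper's proof assumes this normalization in the same way.
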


\begin{proof}{Proof:}
We fix $x \in \mathcal X$ and write $Q^\star(x) = \sup_{I\in\mathcal I}\widehat  Q(x,\xi_I). $ Since $\mathcal I$ is compact and $I\mapsto \widehat  Q(x,\xi_I)$ is Lipschitz continuous, there exists an $I_x^\star\in\mathcal I$ that attains the supremum. Now because $\nu$ is a probability measure, we have 
\(
\mathcal M_{w}^{\nu}(x)
=
w\log
\int_{\mathcal I}
\exp\left(
\widehat  Q(x,\xi_I)/w
\right)\nu(dI)
\le
w\log
\int_{\mathcal I}
\exp\left(
{Q^\star(x)}/{w}
\right)\nu(dI)
=
Q^\star(x).
\)
This proves that  
\(
Q^\star(x)-\mathcal M_{w}^{\nu}(x)\ge0.
\)

Next, we fix any $r \in (0,D_\I]$. For every
$I\in B_{\mathcal I}(I_x^\star,r)$, the Lipschitz property gives
\(
\widehat  Q(x,\xi_I)
\ge
\widehat  Q(x,\xi_{I_x^\star})
-
L_\I d_{\mathcal I}(I,I_x^\star)
\ge
Q^\star(x)-L_\I r.
\)
It follows that
\[
\begin{aligned}
\int_{\mathcal I}
\exp\left(
\frac{\widehat  Q(x,\xi_I)}{w}
\right)\nu(dI)
&\ge
\int_{B_{\mathcal I}(I_x^\star,r)}
\exp\left(
\frac{\widehat  Q(x,\xi_I)}{w}
\right)\nu(dI) \ge 
\exp\left(
\frac{Q^\star(x)-L_\I r}{w}
\right)
\nu\bigl(B_{\mathcal I}(I_x^\star,r)\bigr).
\end{aligned}
\]
Taking logarithms and multiplying by $w$ on both sides yield
\(
\mathcal M_{w}^{\nu}(x)
\ge
Q^\star(x)-L_\I r
+
w\log
\nu\bigl(B_{\mathcal I}(I_x^\star,r)\bigr).
\)
But 
\(
w\log
\nu\bigl(B_{\mathcal I}(I_x^\star,r)\bigr)
\ge
w\log\left[
c_\nu
({r}/{D_\I})^{d_I}
\right]
=
-w\log({1}/{c_\nu})
-
d_Iw\log({D_\I}/{r}).
\)
It follows that 
\(
Q^\star(x)
-
\mathcal M_{w}^{\nu}(x)
\le
L_Ir
+
w\log({1}/{c_\nu})
+
d_Iw\log({D_\I}/{r}).
\)
Since this inequality holds for every $r\in(0,D_\I]$, taking the infimum over $r$ proves the first claim.

For $L_\I>0$, we find a smallest upper bound by choosing \(r\). Under the condition $d_Iw/L_\I\le D_\I$, the unconstrained minimizer \(r^\star={d_Iw}/{L_\I}\) is feasible. 
Substitution gives the desired result. \hfill \(\Box\)
\end{proof}


\subsection{Lower Bound of \(\|dp_x^w/d\nu\|_\infty^{-1}\) in Example~\ref{ex:uniform-mixing}} \label{apx-ex:uniform-mixing}

Recall that, because \(\I = \mathbb{S}^{d_\xi - 1}\), we take $B$ as $B_{\mathbb S}(I_x^\star,\theta):=
\{x\in \mathbb S^{d_\xi-1}:\angle(x,e_1)\le \theta\}$, where 
\(I_x^\star\in\argmax_I\widehat{Q}(I)\) and we assume WLOG that \(I_x^\star = e_1\) because \(\I\) is rotationally invariant. Because
\(I\mapsto\widehat Q(I)\) is
\(L'_\xi\|A\|_{2\to2}\)--Lipschitz, for every
\(J\in B_{\mathbb S}(I_x^\star,\theta)\), it holds that 
\(
\widehat Q(J)
\ge
\widehat Q(I_x^\star)
-
\left(2 \sin \frac{\theta}{2}\right) L'_\xi\|A\|_{2\to2}
\ge
\widehat Q(I_x^\star)
-
L'_\xi\|A\|_{2\to2} \theta.
\)
Then, 
\(
\left\|
{dp_x^w}/{d\nu}
\right\|_\infty^{-1}
=
\int_{\mathcal I}
\exp\left\{
(
\widehat Q(J)
-
\widehat Q(I_x^\star)
)/{w}
\right\}\nu(dJ)
\ge
\nu\!\left(B_{\mathbb S}(I_x^\star,\theta)\right)
\exp\left\{
-{L'_\xi\|A\|_{2\to2}\theta}/{w}
\right\}.
\)
We claim that the uniform measure of a spherical cap satisfies
\[
\nu\!\left(B_{\mathbb S}(I^\star_x,\theta)\right)
\ge
c_{d_\xi}\theta^{d_\xi-1}
\qquad
\forall \, 0<\theta\le\frac{\pi}{2},
\]
where $c_{d_\xi}>0$ depends only on $d_\xi$.

\begin{proof}{Proof of the claim:}
Every $x\in\mathbb S^{d_\xi-1}$ can be written as \(x=(\cos t,\sin t\,\omega),
t\in[0,\pi], \omega\in\mathbb S^{d_\xi-2},\) where $t=\angle(x,e_1)$. In these spherical coordinates, the surface
element is \(d\sigma_{d_\xi-1}(x)
=
\sin^{d_\xi-2}(t)\,dt\,d\sigma_{d_\xi-2}(\omega).\) Because $\nu$ is uniform on \(\I\),
\[
\nu(B_S(I^\star_x,\theta))
=
\frac{
\int_{\mathbb S^{d_\xi-2}}\int_0^\theta
\sin^{d_\xi-2}(t)\,dt\,d\sigma_{d_\xi-2}(\omega)
}{
\int_{\mathbb S^{d_\xi-2}}\int_0^\pi
\sin^{d_\xi-2}(t)\,dt\,d\sigma_{d_\xi-2}(\omega)
}
=
\frac{\int_0^\theta \sin^{d_\xi-2}(t)\,dt}
{\int_0^\pi \sin^{d_\xi-2}(t)\,dt}.
\]
For $0\le t\le \pi/2$, we have
\(
\sin t\ge \frac{2}{\pi}t.
\) Hence, for $0<\theta\le \pi/2$,
\[
\begin{aligned}
\nu(B_S(I^\star_x,\theta))
&\ge
\frac{1}{\int_0^\pi \sin^{d_\xi-2}(t)\,dt}
\int_0^\theta
\left(\frac{2t}{\pi}\right)^{d_\xi-2}\,dt \\
&=
\frac{(2/\pi)^{d_\xi-2}}
{(d_\xi-1)\int_0^\pi \sin^{d_\xi-2}(t)\,dt}
\,\theta^{d_\xi-1}.
\end{aligned}
\]
This proves the claim. \hfill \(\Box\)
\end{proof}

We now optimize the choice of $\theta \in (0,\pi/2]$ to maximize the lower bound \(c_{d_\xi} \theta^{d_\xi - 1} \cdot 
\exp\{-{L'_\xi\|A\|_{2\to2}\theta}/{w}\}\). Note that this maximum is attained at 
\(
\theta
=
\frac{(d_\xi-1)w}
{L_\xi\|A\|_{2\to2}}
\)
, whenever
\(
w
\le
\frac{\pi L_\xi\|A\|_{2\to2}}
{2(d_\xi-1)}.
\)
Therefore, the largest lower bound for \(\|dp_x^w/d\nu\|_\infty^{-1}\) is
\[
\hspace{1.5cm} \frac{\left(\frac{2}{\pi}(d_\xi-1)\right)^{d_\xi-2}\exp\{-(d_\xi-1)\}}
{\int_0^\pi \sin^{d_\xi-2}(t)\,dt} \cdot 
\left(
\frac{w}{L_\xi\|A\|_{2\to2}}
\right)^{d_\xi-1} \ =: \ C_{d_\xi} \, \left(
\frac{w}{L_\xi\|A\|_{2\to2}}
\right)^{d_\xi-1}. \hspace{1.5cm} \Box
\]

\subsection{Proof of Theorem~\ref{thm:1}} \label{apx-thm:1}

We first prove a lemma that characterizes the convergence of the law of \(I_n\) to the invariant distribution \(p^w_x\).
\begin{lemma}\label{lem:tv_bound} Under Assumptions~\ref{ass:general-kernel-lipschitz} and~\ref{ass:uniform-mixing}, 
consider the incumbent sequence $\{x_n: n\in \mathbb{N}_+\}$ on a measurable space $(\mathcal{X},\mathcal{B}_{\mathcal{X}})$
with respect to the natural filtration $\{\mathcal{F}_n: n\in \mathbb{N}_+\}$ generated by $(\mathcal{I}_m, X_m)_{m\le n}$:
\[
X_0, I_0  \to I_1 \to X_1 \to I_2 \to X_2 \to \cdots
\]
Conditional on $\mathcal{F}_n$, the next state $\mathcal{I}_{n+1}$ is sampled from $\mathcal{I}_n$ through the one-step Markov kernel $P_{x_n}$, which admits an invariant distribution $p^w_{x_n}$ on $\mathcal{I}$.

Fix $p > 0$ and suppose that the incumbent movement satisfies 
\(
\|X_n-X_{n-1}\|\le c\alpha_n
\)
almost surely, where $\{\alpha_n: n \in \mathbb{N}_+\}$ satisfies $\alpha_n\le C_\alpha n^{-p}$ for some $C_{\alpha} > 0$.  Then, for every sufficiently large $n$ such that
$f(n) := \lceil (p+1)\log n/|\log\bar\eta|\rceil<n/2$ , we have
\[
\left\|
\mathcal L\!
\left(
I_n\mid
\mathcal F_{n-f(n)}
\right)
-p_{X_{n-f(n)}}^w
\right\|_{\mathrm{TV}} 
\le
\bar\eta^{f(n)}
+\left(\frac{2^pcL_PC_\alpha}{1-\bar\eta}\right)
f(n) n^{-p}
=O\left(\frac{L_P}{1-\bar\eta} n^{-p}\log n\right).
\]
\end{lemma}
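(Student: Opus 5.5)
The plan is to compare the true time-inhomogeneous chain over the window $\{n-f(n),\ldots,n\}$ with a frozen chain that runs $f(n)$ steps under the single kernel $P_{X_{n-f(n)}}$. Write $m:=n-f(n)$, condition on $\mathcal F_m$, and let $\mu_k:=\mathcal L(I_{m+k}\mid\mathcal F_m)$ for $k=0,\ldots,f(n)$. Set $\bar x:=X_m$, which is $\mathcal F_m$-measurable. The triangle inequality gives
\[
\|\mu_{f(n)}-p^w_{\bar x}\|_{\mathrm{TV}}\le \|\delta_{I_m}P_{\bar x}^{f(n)}-p^w_{\bar x}\|_{\mathrm{TV}}+\|\mu_{f(n)}-\delta_{I_m}P_{\bar x}^{f(n)}\|_{\mathrm{TV}}.
\]
The first term is the mixing term. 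Since $p^w_{\bar x}$ is invariant for $P_{\bar x}$, it equals $\|(\delta_{I_m}-p^w_{\bar x})P_{\bar x}^{f(n)}\|_{\mathrm{TV}}$. Submultiplicativity of the Dobrushin coefficient and Assumption~\ref{ass:uniform-mixing} then bound it by $\bar\eta^{f(n)}$.

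The second term is the tracking term, which I would handle by telescoping (a coupling or Duhamel-type expansion). Write the true law as $\mu_{f(n)}=\mathbb E[\delta_{I_m}P_{X_m}P_{X_{m+1}}\cdots P_{X_{n-1}}\mid\mathcal F_m]$. Because the kernels are random and adapted, this should be read as an iterated conditional expectation, but the estimate below holds pathwise. Replace the kernels one at a time by $P_{\bar x}$:
\[
\prod_{k=0}^{f(n)-1}P_{X_{m+k}}-P_{\bar x}^{f(n)}=\sum_{k=0}^{f(n)-1}\Big(\prod_{l<k}P_{X_{m+l}}\Big)\big(P_{X_{m+k}}-P_{\bar x}\big)P_{\bar x}^{f(n)-1-k}.
\]
The measure $(\cdot)(P_{X_{m+k}}-P_{\bar x})$ has zero total mass, and by Assumption~\ref{ass:general-kernel-lipschitz} its TV norm is at most $L_P\|X_{m+k}-\bar x\|$. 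Applying $P_{\bar x}^{f(n)-1-k}$ to a signed measure of zero mass contracts it by $\bar\eta^{f(n)-1-k}$.

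Next bound the incumbent drift. Almost surely, $\|X_{m+k}-X_m\|\le c\sum_{j=m+1}^{m+k}\alpha_j\le c\,k\,C_\alpha m^{-p}$. Since $f(n)<n/2$ gives $m>n/2$, this is at most $2^pcC_\alpha\,k\,n^{-p}\le 2^pcC_\alpha f(n)n^{-p}$. Summing over $k$ yields
\[
\|\mu_{f(n)}-\delta_{I_m}P_{\bar x}^{f(n)}\|_{\mathrm{TV}}\le L_P\,2^pcC_\alpha f(n)n^{-p}\sum_{k}\bar\eta^{f(n)-1-k}\le\frac{2^pcL_PC_\alpha}{1-\bar\eta}f(n)n^{-p}.
\]
This is exactly the second term of the claim. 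The bound holds pathwise, so it passes through the conditional expectation by convexity of the TV norm.

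For the order statement, the choice $f(n)=\lceil (p+1)\log n/|\log\bar\eta|\rceil$ gives $\bar\eta^{f(n)}\le n^{-(p+1)}$, which is dominated by the second term of order $f(n)n^{-p}=O(n^{-p}\log n)$. Hence the total is $O\big(\frac{L_P}{1-\bar\eta}n^{-p}\log n\big)$, absorbing $|\log\bar\eta|^{-1}\lesssim(1-\bar\eta)^{-1}$-type constants as the paper does.

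The main obstacle is making the telescoping rigorous when the kernels depend on the random incumbents. The fix is to note that for each $k$, the conditional law of $I_{m+k+1}$ given $\mathcal F_{m+k}$ is $\delta_{I_{m+k}}P_{X_{m+k}}$. One then peels off one step at a time via $\nu_{k+1}-\nu_k P_{\bar x}$ with $\nu_k:=\mu_k$, using the tower property and the fact that the drift bound is almost sure and deterministic. This avoids any need for independence between the incumbents and the chain.
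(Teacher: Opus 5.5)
Your proposal is correct and follows essentially the same route as the paper. Both use the triangle inequality against the frozen chain $\delta_{I_{n-f(n)}}P_{X_{n-f(n)}}^{f(n)}$ and the Dobrushin bound $\bar\eta^{f(n)}$ for the mixing term. The step-by-step peeling you fall back on at the end is exactly the paper's Claims (I)--(III): the one-step freezing estimate $\|\mu_{k+1}-\mu_kP_{X_{n-f(n)}}\|_{\mathrm{TV}}\le cL_P\sum_j\alpha_j$ (via the tower property and Assumption~\ref{ass:general-kernel-lipschitz}), propagated by contraction. Note that your displayed product-of-random-kernels formula for $\mu_{f(n)}$, and the accompanying ``pathwise'' claim, are not literally valid when $X_{m+k}$ depends on the intermediate states; that peeling recursion is the actual proof, not just a technical patch on the Duhamel expansion.
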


\begin{proof}{Proof of Lemma~\ref{lem:tv_bound}:}
Fix a sufficiently large $n$ and for $k \ge n-f(n)$,  we write
\(
\mu_k:=\mathcal L(I_k\mid\mathcal F_{n-f(n)}).
\) 
  Insert the law of a chain that starts from the state
$I_{n-f(n)}$ and then applies the frozen kernel $P_{X_{n-f(n)}}$ for
$f(n)$ steps. The triangle inequality gives 
\begin{align*}
&\left\|
\mathcal L\!\left(I_n\mid\mathcal F_{n-f(n)}\right)
-p_{X_{n-f(n)}}^w
\right\|_{\mathrm{TV}}
\\
&\quad\le
\underbrace{\left\|
\delta_{I_{n-f(n)}}P_{X_{n-f(n)}}^{\,f(n)}
-p_{X_{n-f(n)}}^w
\right\|_{\mathrm{TV}}}_{\text{(A) mixing error of the frozen chain}}
+
\underbrace{\left\|
\mathcal L\!\left(I_n\mid\mathcal F_{n-f(n)}\right)
-\delta_{I_{n-f(n)}}P_{X_{n-f(n)}}^{\,f(n)}
\right\|_{\mathrm{TV}}}_{\text{(B) actual endogenous chain versus frozen chain}}
\end{align*}
{The term (A) measures the 
mixing error and can be bounded as follows}. By definition of the invariant distribution, we have $p_{X_{n-f(n)}}^wP_{X_{n-f(n)}}=p_{X_{n-f(n)}}^w$, where $P_{X_{n-f(n)}}$ is understood as a transition Markov operator defined by $(\mu P_{X_{n-f(n)}})(A)=\int_\I P_{X_{n-f(n)}}(I,A) \mu (dI)$ for any measure \(\mu\). Then, applying the Dobrushin
contraction inequality from Assumption~\ref{ass:uniform-mixing} for $f(n)$ times yields
\[
\left\|
\delta_{I_{n-f(n)}}P_{X_{n-f(n)}}^{\,f(n)}
-p_{X_{n-f(n)}}^w
\right\|_{\mathrm{TV}}
\le
\bar\eta^{\,f(n)}
\left\|\delta_{I_{n-f(n)}}-p_{X_{n-f(n)}}^w\right\|_{\mathrm{TV}}
\le \bar\eta^{\,f(n)}.
\]

The term (B) measures 
the perturbation caused by using the changing kernels
$P_{X_{n-f(n)}},\ldots,P_{X_{n-1}}$ instead of repeatedly using the frozen
kernel $P_{X_{n-f(n)}}$; its control requires a one-step comparison followed
by an induction. 

\textbf{(I) one-step freezing error}. 
For every $k\ge n - f(n)$, we claim that 
\[
\|\mu_{k+1}-\mu_kP_{X_{n - f(n)}}\|_{\mathrm{TV}}
\le
cL_P\sum_{j=n - f(n)+1}^{k}\alpha_j.
\]

\begin{proof}{Proof of Claim \emph{(I)}:}
Fix a Borel set $B\subseteq\mathcal I$. Because $\mathcal F_{n - f(n)}\subseteq\mathcal F_k$, the tower property gives 
\begin{align*}
\mu_{k+1}(B)
&=\mathbb E\!\left[
\mathbf 1{\{I_{k+1}\in B\}}\mid\mathcal F_{n - f(n)}
\right]\\
&=\mathbb E\!\left[
\mathbb E\!\left[
\mathbf 1{\{I_{k+1}\in B\}}\mid\mathcal F_k
\right]\middle|\mathcal F_{n - f(n)}
\right]\\
&=\mathbb E\!\left[P_{X_k}(I_k,B)\mid\mathcal F_{n - f(n)}\right].
\end{align*}
Applying the frozen kernel
$P_{X_{n - f(n)}}$ to $\mu_k$ gives
\(
(\mu_kP_{X_{n - f(n)}})(B)
=\mathbb E\!\big[P_{X_{n - f(n)}}(I_k,B)\mid\mathcal F_{n - f(n)}\big].
\)
Subtracting this from the previous representation gives
\begin{align*}
\left|\mu_{k+1}(B)-(\mu_kP_{X_{n-f(n)}})(B)\right|
&=
\left|
\mathbb E\!\left[
P_{X_k}(I_k,B)-P_{X_{n-f(n)}}(I_k,B)
\,\middle|\,\mathcal F_{n - f(n)}
\right]
\right| \\
&\le
\mathbb E\!\left[
\left|
P_{X_k}(I_k,B)-P_{X_{n-f(n)}}(I_k,B)
\right|
\,\middle|\,\mathcal F_{n - f(n)}
\right] \\
&\le
\mathbb E\!\left[
\left\|
P_{X_k}(I_k,\cdot)-P_{X_{n-f(n)}}(I_k,\cdot)
\right\|_{\mathrm{TV}}
\,\middle|\,\mathcal F_{n - f(n)}
\right],
\end{align*}
where the first inequality follows from conditional Jensen's inequality and taking the supremum over all Borel sets $B$ yields the second inequality.
Taking the supremum over all Borel sets $B$ yields
\[
\left\|
\mu_{k+1}-\mu_kP_{X_{n-f(n)}}
\right\|_{\mathrm{TV}}
\le
\mathbb E\!\left[
\left\|
P_{X_k}(I_k,\cdot)-P_{X_{n-f(n)}}(I_k,\cdot)
\right\|_{\mathrm{TV}}
\,\middle|\,\mathcal F_{n - f(n)}
\right].
\]
Now we use the Lipschitz property with constant $L_P$ from Assumption~\ref{ass:general-kernel-lipschitz} to obtain
\[
\left\|
\mu_{k+1}-\mu_kP_{X_{n-f(n)}}
\right\|_{\mathrm{TV}} \le L_P\mathbb E\!\left[\|X_k-X_{n-f(n)} \|\mid\mathcal F_{n - f(n)} \right].
\]
Last, we can bound $\|X_k-X_{n - f(n)}\| $ by
\(
\|X_k-X_{n - f(n)}\|
\le\sum_{j=n - f(n)+1}^{k}\|X_j-X_{j-1}\| \le c\sum_{j=n - f(n)+1}^{k}\alpha_j.
\)
Substituting it back to the previous line proves the claim. \hfill \(\Box\)
\end{proof}

\textbf{(II) Propagation of the one-step errors by induction}. We next claim that, for every integer $m$ with $n - f(n)\le m\le n$, it holds that
\[
\left\| \mu_m-\delta_{I_{n - f(n)}}P_{X_{n - f(n)}}^{\,m-(n - f(n))}
\right\|_{\mathrm{TV}} 
\le
cL_P\sum_{k=n - f(n)}^{m-1}\bar\eta^{\,m-1-k}
\sum_{j=n - f(n)+1}^{k}\alpha_j.
\]

\begin{proof}{Proof of Claim \emph{(II)}:}
We prove this by \textbf{induction} on $m$. For $m= n- f(n)$, the left hand side is zero as \(\mu_{n - f(n)} = \delta_{I_{n - f(n)}} \). The right-hand side is non-negative so the \textit{base claim} holds. 

\textit{Induction step}. Now suppose that the claim holds for some $m \in \{n-f(n),\ldots,n-1\}$ and we can write by the triangle inequality
{\footnotesize
\[
\left\|\mu_{m+1}-\delta_{I_{n-f(n)}}P_{X_{n-f(n)}}^{\,m+1-(n-f(n))}
\right\|_{\mathrm{TV}} 
\le
\left\|\mu_{m+1}- \mu_m P_{X_{n-f(n)}} \right\|_{\mathrm{TV}}
+\left\|
\mu_m P_{X_{n-f(n)}} -\delta_{I_{n-f(n)}} P_{X_{n-f(n)}}^{\,m+1-(n-f(n))}
\right\|_{\mathrm{TV}}.
\]
}
The first term of RHS can be bounded by {Claim (I)} above and the second term can be bounded by applying the Dobrushin contraction once under the common frozen kernel $P_{X_{n-f(n)}}$. Then,
\[
\left\|\mu_{m+1}-\delta_{I_{n-f(n)}}P_{X_{n-f(n)}}^{\,m+1-(n-f(n))}
\right\|_{\mathrm{TV}} \le c L_P \sum_{j=n - f(n)+1}^{m}\alpha_j +  \bar{\eta} \left\| \mu_m-\delta_{I_{n - f(n)}}P_{X_{n - f(n)}}^{\,m-(n - f(n))}
\right\|_{\mathrm{TV}}.
\]
Now by the induction assumption, this can be further bounded by
\begin{align*}
\left\|\mu_{m+1}-\delta_{I_{n-f(n)}}P_{X_{n-f(n)}}^{\,m+1-(n-f(n))}
\right\|_{\mathrm{TV}} \le \ & cL_P\sum_{j=n - f(n)+1}^{m}\alpha_j
+  
cL_P\bar\eta  \sum_{k=n - f(n)}^{m-1}\bar\eta^{\,m-1-k}
\sum_{j=n - f(n)+1}^{k}\alpha_j \\ 
= \ & cL_P\sum_{k=n - f(n)}^{m}\bar\eta^{\,m-k}
\sum_{j=n - f(n)+1}^{k}\alpha_j.
\end{align*}
This is exactly the claim with $m + 1$ in place of $m$, so the induction step is complete. \hfill \(\Box\)
\end{proof}

\textbf{(III) Taking $m=n$ in Claim~(II) } gives
\[
\left\|
\mathcal L\!\left(I_n\mid\mathcal F_{n-f(n)}\right)
-\delta_{I_{n-f(n)}}P_{X_{n-f(n)}}^{\,f(n)}
\right\|_{\mathrm{TV}} \le c L_P 
\sum_{k=n - f(n)}^{n-1}\bar\eta^{\,n-1-k}
\sum_{j=n - f(n)+1}^{k}\alpha_j \le \frac{c L_P}{1 - \bar\eta} \sum_{j= n - f(n)+1}^{n-1} \alpha_j.
\]
Combining (I)--(III) produces
\[
\left\|
\mathcal L\!\left(I_n\mid\mathcal F_{n-f(n)}\right)
-p_{X_{n-f(n)}}^w
\right\|_{\mathrm{TV}} \le 
\bar\eta^{\,f(n)}
+
\frac{c L_P}{1 - \bar\eta} \sum_{j= n - f(n)+1}^{n-1} \alpha_j.
\]
Finally, we recall that $f(n)$ is $O(\log n)$ and $f(n) < n/2$ for a sufficiently large $n$. Therefore, 
\(
\sum_{j=n-f(n)+1}^{n-1}\alpha_j
\le C_\alpha f(n)(n/2)^{-p}.
\)
This completes the proof. \hfill \(\Box\)
\end{proof}

Now we are ready to prove Theorem~\ref{thm:1}.

\begin{proof}{Proof of Theorem~\ref{thm:1}:}
\textbf{We first compare the set of adversarial scenarios with respect to $X_{n-f(n)}$ and $X_{n-1}$}. Specifically, we claim that
\begin{equation}
\widehat I_{(\widetilde\Delta-2\varepsilon)/2}(X_{n-f(n)})
\subseteq
\widehat I_{\widetilde\Delta-2\varepsilon}(X_{n-1})
\subseteq
I_{\widetilde\Delta}(X_{n-1}). \label{claim-inclusion} \tag{Inclusion}
\end{equation}
\begin{proof}{Proof of the claim:}

For the first inclusion, we take any $I \in \widehat I_{(\widetilde\Delta-2\varepsilon)/2} (X_{n-f(n)})$ and notice that
\begin{align*}
\widehat Q_{X_{n-1}}(I) 
&\ge \widehat Q_{X_{n-f(n)}} (I) - L_{Q'}\|X_{n-1}- X_{n-f(n)} \| 
\\ 
&\ge \underbrace{\widehat Q^\star (X_{n-f(n)} ) - \frac{\widetilde\Delta-2\varepsilon}{2}}_{\text{from} \ I \in \widehat I_{(\widetilde\Delta-2\varepsilon)/2} (X_{n-f(n)}) }  
- L_{Q'}\|X_{n-1}- X_{n-f(n)} \|   
\\
& \ge \widehat Q^\star (X_{n-1} ) - \frac{\widetilde\Delta-2\varepsilon}{2} - 2 L_{Q'}\|X_{n-1}- X_{n-f(n)} \|,  
\end{align*}
where the last inequality follows from 
\(
|\widehat Q^\star(x)-\widehat Q^\star(y)|
\le L_{ Q'}\|x-y\|.
\)
Indeed, Assumption~\ref{ass:lipschitz-scores} implies that, for all \(I \in \I\), $\widehat Q_x(I)\le\widehat Q_y(I)
+L_{ Q'}\|x-y\| \le  \widehat Q^\star (y)
+L_{ Q'}\|x-y\|$. Taking the supremum over $I$ gives
one direction of this inequality, and interchanging $x$ and $y$ gives the other direction. Now recall that we have proved in Lemma~\ref{lem:tv_bound} that
\(
\|X_{n-1}-X_{n-f(n)}\|  \le
2^pcC_\alpha
f(n) n^{-p} .
\)
Because \(n\) is sufficiently large with \(2^pcC_\alpha f(n) n^{-p} \leq (\widetilde \Delta -2 \varepsilon)/(4L'_Q)\), we have \(\widehat Q_{X_{n-1}}(I) \geq \widehat Q^\star(X_{n-1}) - (\widetilde \Delta - 2\varepsilon)\) and so \(I \in \widehat I_{\widetilde\Delta-2\varepsilon}(X_{n-1})\).

For the second inclusion, we recall that \(\widehat Q^\star(x)\ge Q^\star(x)-\varepsilon.\)  Therefore, if
$I\in\widehat I_{\widetilde\Delta-2\varepsilon}(x)$ then
\(
Q(x,I) \ge \widehat Q_x(I) - \varepsilon \ge \widehat Q^\star (x) - ( \widetilde\Delta-2\varepsilon)-\varepsilon \ge Q^\star (x) - \widetilde\Delta,
\)
implying that \(I \in I_{\widetilde\Delta}(X_{n-1})\). \hfill \(\Box\)
\end{proof}

\textbf{Next, we apply the set inclusion claim to bound the target probability.}
The first inclusion in~\eqref{claim-inclusion} implies that
\(
\left\{I_n\notin I_{\widetilde\Delta}(X_{n-1})\right\}
\subseteq
\left\{
I_n\notin
\widehat I_{(\widetilde\Delta-2\varepsilon)/2}
(X_{n-f(n)})
\right\}.
\)
Taking conditional probabilities given filtration $\mathcal{F}_{n-f(n)}$ gives
\begin{align*}
\mathbb P \left\{I_n\notin I_{\widetilde\Delta}(X_{n-1}) \mid\mathcal {F}_{n-f(n)} 
\right\}
&\le
\mathbb P \left\{
I_n\notin
\widehat I_{(\widetilde\Delta-2\varepsilon)/2}
(X_{n-f(n)})
 \mid\mathcal {F}_{n-f(n)} 
\right\}
\\ 
& = 
\mathcal L(I_n\mid \mathcal F_{n-f(n)}  )
\left(
\widehat I_{(\widetilde\Delta-2\varepsilon)/2} 
(X_{n-f(n)})^c
\right).
\end{align*}
Recall from Lemma~\ref{lem:tv_bound} that
\(
\|
\mathcal L\!
\left(
I_n\mid
\mathcal F_{n-f(n)}
\right)
-p_{X_{n-f(n)}}^w
\|_{\mathrm{TV}} 
\le
\bar\eta^{f(n)}
+({(2^pcL_PC_\alpha)}/{(1-\bar\eta)})
f(n) n^{-p}.
\)
Using \(\mu(B)
\le
\pi(B)+\|\mu-\pi\|_{\mathrm{TV}}\) with $B = \widehat I_{(\widetilde\Delta-2\varepsilon)/2} 
(X_{n-f(n)})^c$, we obtain
\[
\mathbb P\left(
I_n\notin I_{\widetilde\Delta}(X_{n-1})
\mid \mathcal F_{n-f(n)}
\right )
\le 
p_{X_{n-f(n)}}^w 
\left(
\widehat I_{(\widetilde\Delta-2\varepsilon)/2} 
(X_{n-f(n)})^c
\right)
+
\bar\eta^{f(n)}
+\frac{2^pcL_PC_\alpha}{1-\bar\eta}
f(n) n^{-p}.
\]

To bound the first term on the right-hand side, we observe that
\[
p_{X_{n-f(n)}}^w 
\left(
\widehat I_{(\widetilde\Delta-2\varepsilon)/2} 
(X_{n-f(n)})^c
\right)=
 \frac{ \int_{\widehat I_{(\widetilde\Delta-2\varepsilon)/2} 
\left(X_{n-f(n)}\right)^c } \,
\exp\!\bigl\{\widehat{Q}_{X_{n-f(n)}}(I)/w\bigr\} d\nu(I)
}{
\int_{\mathcal I}
\exp\!\bigl\{\widehat{Q}_{X_{n-f(n)}} (J)/w\bigr\}\,d\nu(J)
}.
\]
Now
{\footnotesize
\begin{align*}
 \int_{\widehat I_{(\widetilde\Delta-2\varepsilon)/2} 
\left(X_{n-f(n)}\right)^c }
\exp\!\left\{\frac{\widehat{Q}_{X_{n-f(n)}}(I)}{w}\right\} d\nu(I) & \le \nu\left( \widehat I_{(\widetilde\Delta-2\varepsilon)/2} 
(X_{n-f(n)})^c \right)   \sup_{I \notin\widehat I_{(\widetilde\Delta-2\varepsilon)/2} 
(x) } \exp\left\{\frac{\widehat Q_x (I)}{w}\right\}
\\ 
&\le 
\nu\left( \widehat I_{(\widetilde\Delta-2\varepsilon)/2} 
(X_{n-f(n)})^c \right)  \exp\left\{\frac{\widehat Q^\star  (X_{n-f(n)}) - (\widetilde\Delta-2\varepsilon)/2 }{w}\right\}
\end{align*}
}
and
{\footnotesize
\begin{align*}
 \int_{\mathcal I}
\exp\!\left\{\frac{\widehat{Q}_{X_{n-f(n)}} (J)}{w}\right\}\,d\nu(J)
&\ge \nu\left( \widehat I_{(\widetilde\Delta-2\varepsilon)/4} 
(X_{n-f(n)}) \right)   \inf_{I \in\widehat I_{(\widetilde\Delta-2\varepsilon)/4} 
(x) } \exp\left\{\frac{\widehat Q_x (I)}{w}\right\}
\\ 
&\ge 
\nu\left( \widehat I_{(\widetilde\Delta-2\varepsilon)/4} 
(X_{n-f(n)}) \right)\exp\left\{\frac{\widehat Q^\star  (X_{n-f(n)}) - (\widetilde\Delta-2\varepsilon)/4 }{w}\right\}.
\end{align*}
}

Thus,
\[
p_{X_{n-f(n)}}^w 
\left(
\widehat I_{(\widetilde\Delta-2\varepsilon)/2} 
(X_{n-f(n)})^c
\right)
\le
\frac{\nu( \widehat I_{(\widetilde\Delta-2\varepsilon)/2} 
(X_{n-f(n)})^c ) }{\nu( \widehat I_{(\widetilde\Delta-2\varepsilon)/4} 
(X_{n-f(n)}) ) } \exp \left\{- \frac{\tilde{\Delta} -2\varepsilon}{4 w} \right\}.
\]

\textbf{Last, we summarize and remove the conditioning} to obtain
\begin{align*}
\mathbb P\left(
I_n\notin I_{\widetilde\Delta}(X_{n-1})
\right) = & \mathbb E\!\left[
\mathbb P\left(
I_n\notin I_{\widetilde\Delta}(X_{n-1})
\mid \mathcal{F}_{n-f(n)}
\right)
\right]
\\
\le & \bar\eta^{f(n)}
+\frac{2^pcL_PC_\alpha}{1-\bar\eta}
f(n) n^{-p}
+  \sup_{x \in \mathcal X} \frac{\nu( \widehat I_{(\widetilde\Delta-2\varepsilon)/2} 
(x )^c )}{\nu( \widehat I_{(\widetilde\Delta-2\varepsilon)/4} 
(x ) )} \exp \left\{- \frac{\tilde{\Delta} -2\varepsilon}{4 w} \right\}. \qquad \qquad \Box
\end{align*}

\end{proof}

\subsection{Proof of Example~\ref{ex:concentration}}
\label{app:proof-concentration}
\begin{proof}{Proof:} Recall from Lemma~\ref{lem:recourse_lipschitz_smoothing} that
$Q(x,\cdot)$ is $L'_\xi$-Lipschitz on $\Xi$, uniformly over
$x\in\mathcal X$. Since $\widehat Q_x(I)=Q(x,\xi_I)$ and
$\xi_I=I$ in Example~\ref{ex:concentration}, it follows that
$\widehat Q_x(\cdot)$ is also $L'_\xi$-Lipschitz on $\mathcal I=\Xi$.

We first establish a uniform lower bound on the measure of metric balls.
Equip $\Xi$ with the Euclidean metric and recall that $\nu$ is the
normalized Lebesgue measure on $\Xi$, i.e.,
$
\nu(A):=
\frac{\operatorname{vol}_{d_\xi}(A\cap\Xi)}
{\operatorname{vol}_{d_\xi}(\Xi)}.
$
For any $I\in\Xi$ and $0<r\le D_{\mathcal I}$, define the scaled set
$S:=I+(r/D_{\mathcal I})(\Xi-I)$. By convexity of $\Xi$,
$S\subseteq\Xi$. Moreover, for any $s\in S$, there exists
$J\in\Xi$ such that
$s=I+(r/D_{\mathcal I})(J-I)$, and hence
$
\|s-I\|
=
\frac{r}{D_{\mathcal I}}\|J-I\|
\le
\frac{r}{D_{\mathcal I}}\operatorname{diam}(\Xi)
=r.
$
Thus, $S\subseteq B_{\mathcal I}(I,r)\cap\Xi$, where
$B_{\mathcal I}(I,r):=\{J\in\mathcal I:\|J-I\|\le r\}$.
By the transformation properties of the Lebesgue measure under scaling
and translation~\citep[see][Theorem 2.20, 2.23]{rudin1974real},
$\operatorname{vol}_{d_\xi}(S)
=(r/D_{\mathcal I})^{d_\xi}\operatorname{vol}_{d_\xi}(\Xi)$.
Consequently,
\[
\nu\bigl(B_{\mathcal I}(I,r)\bigr)
\ge
\frac{\operatorname{vol}_{d_\xi}(S)}
{\operatorname{vol}_{d_\xi}(\Xi)}
=
\left(\frac{r}{D_{\mathcal I}}\right)^{d_\xi}.
\]
For any $x\in\mathcal X$, let
$I_x^\star\in\arg\max_{I\in\mathcal I}\widehat Q_x(I)$ and set
$r_{\widetilde\Delta}:=
\min\{D_{\mathcal I},\widetilde\Delta/(4L'_\xi)\}$.
Since $L'_\xi r_{\widetilde\Delta}\le\widetilde\Delta/4$, the
$L'_\xi$-Lipschitz continuity of $\widehat Q_x(\cdot)$ implies
$B_{\mathcal I}(I_x^\star,r_{\widetilde\Delta})
\subseteq\widehat I_{\widetilde\Delta/4}(x)$.
Therefore, by the preceding small-ball bound and $\nu(\mathcal I)=1$,
\[
\nu\bigl(\widehat I_{\widetilde\Delta/4}(x)\bigr)
\ge
\left(\frac{r_{\widetilde\Delta}}{D_{\mathcal I}}\right)^{d_\xi}
=
\min\left\{
1,\frac{\widetilde\Delta}{4L'_\xi D_{\mathcal I}}
\right\}^{d_\xi}, \Longrightarrow
\frac{\nu\bigl(\widehat I_{\widetilde\Delta/2}(x)^c\bigr)}
{\nu\bigl(\widehat I_{\widetilde\Delta/4}(x)\bigr)}
\le
\max\left\{
1,\frac{4L'_\xi D_{\mathcal I}}{\widetilde\Delta}
\right\}^{d_\xi}.
\]
Taking the supremum over $x\in\mathcal X$ completes the proof.
\end{proof}

\subsection{Proof of Theorem~\ref{thm:finite-high-prob}}

\begin{proof}{Proof:}
We denote $I_t^\star:=I^\star(X_t)$ for \(t \in \mathbb{N}_+\). {We first bound $\mathbb P(I_{t+1}\notin I_t^\star\mid\mathcal F_t)$} by discussing the following two cases. 

\noindent \textbf{Case 1}. If $I_t \in I_t^\star $, then for any $j \notin I_t^\star$, 
\[
\widehat Q_{X_t}(I_t)-\widehat Q_{X_t}(j) \ge Q(X_t,I_t)-Q(X_t,j)-2\varepsilon \geq \tilde{\Delta}_t - 2 \varepsilon \ge\Delta_\star-2\varepsilon.
\]

The invariance of $p_{X_t}^w$ gives 
\(
p_{X_t}^w((I_t^\star)^c) = \sum_{i\in\mathcal I}
p_{X_t}^w(i)P_{X_t}(i,(I_t^\star)^c) \ge
p_{X_t}^w(I_t)P_{X_t}(I_t,(I_t^\star)^c).
\)
Then, 
\begin{align*}
\mathbb P(I_{t+1}\notin I_t^\star\mid\mathcal F_t)  &\le \frac{p_{X_t}^w((I_t^\star)^c)}{p_{X_t}^w(I_t)} \ = \ \sum_{j\notin I_t^\star}
\exp\left\{
\frac{\widehat Q_{X_t}(j)-\widehat Q_{X_t}(I_t)}{w}
\right\} \ \le \ 
(|\mathcal I|-1)
\exp\left\{-\frac{\Delta_\star-2\varepsilon}{w}\right\}.
\end{align*}

\noindent \textbf{Case 2}. If $I_t \notin I_t^\star $, then we choose any $j\in I_t^\star$ and it follows from the definition of total variation that
\begin{align*}
\mathbb P(I_{t+1}\notin I_t^\star\mid\mathcal F_t) = P_{X_t}(I_t,(I_t^\star)^c)  &\le P_{X_t}(j,(I_t^\star)^c)
+\|P_{X_t}(I_t,\cdot)-P_{X_t}(j,\cdot)\|_{\mathrm{TV}}
\\ & \le (|\mathcal I|-1)
\exp\left\{-\frac{\Delta_\star-2\varepsilon}{w}\right\} + \bar\eta,
\end{align*}
where the second inequality follows from Case 1 and Assumption~\ref{ass:uniform-mixing}. Combining these two cases gives
\[
\mathbb P(I_{t+1}\notin I_t^\star\mid\mathcal F_t) \le (|\mathcal I|-1)
\exp\left\{-\frac{\Delta_\star-2\varepsilon}{w}\right\} + \bar\eta \mathbf 1_{\{I_t\notin I_t^\star\}}.
\]

We then take expectations and use the tower property to obtain
\(
\mathbb P(I_{t+1}\notin I_t^\star ) \le 
(|\mathcal I|-1)
\exp\left\{-{(\Delta_\star-2\varepsilon)}/{w}\right\} + \bar\eta \mathbb P {\{I_t\notin I_t^\star\}}.
\)
If $I_t\notin I_t^\star$, then either $I_{t} \notin I_{t-1}^\star $ or $I_t^\star \neq I_{t-1}^\star$ (if neither event happens, then $I_{t} \in  I_{t-1}^\star = I_t^\star$, contradicting the presumption). It follows that 
\[
\mathbb P {\{I_t\notin I_t^\star\}} \le  \mathbb P {\{I_t\notin I_{t-1}^\star\}} + \mathbb P {\{I_t^\star \neq I_{t-1}^\star\}}.
\]

{Second, we bound the second term in right-hand side above}. If two sets $I_t^\star$ and $I_{t-1}^\star$ differ, 
then WLOG there is a state $a \in I_{t-1}^\star \backslash I_t^\star$ (if \(I_{t-1}^\star \backslash I_t^\star = \varnothing\), then pick an $a \in I_t^\star \backslash I_{t-1}^\star$ and a similar proof as below works). For any state $b \in I_t^\star$, we have
\[
\widehat Q_{X_t}(b)-\widehat Q_{X_t}(a)
\ge\Delta_\star-2\varepsilon, \ 
\widehat Q_{X_{t-1}}(b)-\widehat Q_{X_{t-1}}(a)
\le2\varepsilon.
\]
It follows that
\begin{align*}
\Delta_\star-4\varepsilon
&\le
\bigl[
  \widehat Q_{X_t}(b)-\widehat Q_{X_t}(a)
\bigr]
-
\bigl[
  \widehat Q_{X_{t-1}}(b)-\widehat Q_{X_{t-1}}(a)
\bigr]
\\
&=
\bigl[
 \widehat Q_{X_t}(b)- \widehat Q_{X_{t-1}}(b)
\bigr]
+
\bigl[
  \widehat Q_{X_{t-1}}(a)-\widehat Q_{X_t}(a)
\bigr] \\
&\le
\left|
  \widehat Q_{X_{t-1}}(b)-\widehat Q_{X_t}(b)
\right|
+
\left|
  \widehat Q_{X_t}(a)-\widehat Q_{X_{t-1}}(a)
\right|.
\end{align*}
Thus,
\begin{align*}
\mathbb P {\{I_t^\star \neq I_{t-1}^\star\}} = \mathbb E \big[\mathbf{1}{\{I_t^\star \neq I_{t-1}^\star\}}\big] 
& \le \mathbb E \left[\frac{\left|
  \widehat Q_{X_{t-1}}(b)-\widehat Q_{X_t}(b)
\right|
+
\left|
  \widehat Q_{X_t}(a)-\widehat Q_{X_{t-1}}(a)
\right|}{\Delta_\star-4\varepsilon}\right]
\\ &\le 
\frac{2L_{Q'}\|X_t-X_{t-1}\|}{\Delta_\star-4\varepsilon} \le \frac{2 c L_{Q}' }{\Delta_\star-4\varepsilon} \alpha_t.
\end{align*}
We combine the results and obtain
\[
\mathbb P(I_{t+1}\notin I_t^\star ) \le 
(|\mathcal I|-1)
\exp\left\{-\frac{\Delta_\star-2\varepsilon}{w}\right\} + \bar\eta 
\left(\mathbb P {\{I_t\notin I_{t-1}^\star\}} + \frac{2 c  L_{Q}' }{\Delta_\star-4\varepsilon} \alpha_t  \right).
\]

Telescoping this inequality for
$t=1$ through $t=n-1$ gives a geometric coefficient
$\bar\eta^{\,n-1-t}$ for the movement term generated at time $t$, and a
geometric sum $\sum_{j=0}^{n-2}\bar\eta^j
=(1-\bar\eta^{\,n-1})/(1-\bar\eta)$. In other words, for any $n \ge 2$, we obtain
\[
\mathbb P\left(I_n\notin I^\star(X_{n-1})\right)
\le
\bar\eta^{\,n-1}
\mathbb P\left(I_1\notin I^\star(X_0)\right)
+\frac{2c\bar\eta L_{ Q}'}
{\Delta_\star-4\varepsilon}
\sum_{t=1}^{n-1}
\bar\eta^{\,n-1-t}\alpha_t
+
\frac{|\mathcal I|-1}{1-\bar\eta}
(1-\bar\eta^{\,n-1})
\exp\left\{-\frac{\Delta_\star-2\varepsilon}{w}\right\},
\]
which proves the first claim of Theorem~\ref{thm:finite-high-prob}.

Finally, recall that the preceding argument on the event $\{I_t^\star\ne I_{t-1}^\star\}$ implies
\[
\Delta_\star-4\varepsilon
\le2L_Q'\|X_t-X_{t-1}\| 
\le 2cC_\alpha L_Q't^{-p}. 
\]
But, for all $t\ge n_0 :=\max\left\{2,\,
1+\left\lceil
\left(\frac{2cC_\alpha L_Q'}{\Delta_\star-4\varepsilon}\right)^{1/p}
\right\rceil\right\}$, the last quantity is strictly smaller than $\Delta_\star-4\varepsilon$. Therefore, 
$\mathbb P(I_t^\star\ne I_{t-1}^\star)=0$ for all $t\ge n_0$. The prior recursion therefore simplifies to
\[
\mathbb P(I_{t+1}\notin I_t^\star)
\le\bar\eta\,\mathbb P(I_t\notin I_{t-1}^\star) 
+
(|\mathcal I|-1)
\exp\left\{-\frac{\Delta_\star-2\varepsilon}{w}\right\}.
\]
Telescoping this inequality, starting from \(t = n_0\), produces the second claim of Theorem~\ref{thm:finite-high-prob}. \hfill \(\Box\)

\end{proof}

\subsection{Proof of Theorem~\ref{thm:continuous-state-convergence}} \label{apx-thm:continuous-state-convergence}

\begin{proof}{Proof:} 
We shall keep using the framework for proving Theorem~\ref{thm:continuous_convergence}. It suffices to justify its extension from finite matrices to operators
on bounded measurable functions and verify the corresponding constants. The major descent and averaging arguments remain similar. We note that the score is exact, i.e.,
$\widehat Q_x(I)\equiv Q(x,\xi_I)$ (and hence $\epsilon=0$),
and let $\nu$ be a probability measure.

\noindent\textbf{(i) Properties inherited from Lemmas~\ref{lem:recourse_lipschitz_smoothing} and~\ref{prop:L-smooth}.} 

First, the proof of Lemma~\ref{lem:recourse_lipschitz_smoothing} applies uniformly to all $I\in\I$ as the dual polyhedron is independent of the state
space. It follows that
\(
-E^\top\pi^\star(x,I)\in\partial_xQ(x,\xi_I),
\qquad \|E^\top\pi^\star(x,I)\|\le L'_Q.
\)
Since $\nu$ is a finite nonzero measure and $Q(x,\cdot)$ is measurable and bounded for every $x\in\mathcal X$, we have $0< \int_{\mathcal I}\exp\{Q(x,\xi_I)/w\} \nu(dI) < \infty$. Hence, $F_w^\nu$ and $p_x^w$ are well-defined. 

Second, the proof of Lemma~\ref{prop:L-smooth} extends by replacing finite sums with integrals (note that the selected subgradients are
measurable and uniformly bounded, so their integrals are well-defined). In particular, we have 
\(z_w(x) :=\int_\I Z_x(I)p_x^w(dI)\in\partial F_w^\nu(x)\), \(\|Z_x(I)\|, \|z_w(x)\|\le L_f+L'_Q\), and \(|F_w^\nu(x)-F_w^\nu(y)| \le(L_f+L'_Q)\|x-y\|\) for all \(x, y \in \X\).

\noindent \textbf{(ii) The continuous-state Poisson operator and Proposition~\ref{prop:poisson-noV}.}

For a bounded measurable function $\varphi$, define $ (P_x\varphi)(I):=\int\varphi(J)P_x(I,dJ),$ and $((\mathbf1p_x^w)\varphi)(I):=\int\varphi(J)p_x^w(dJ)$. Note that these are bounded operators under the supremum norm and the MH kernel has invariant measure $p_x^w$ by the detailed balance equations. 
For any
probability measures $\mu$ and $\lambda$ on $\I$ and for all $t\ge1$, Assumption~\ref{ass:uniform-mixing} implies that
\[
\|\mu P_x^t-\lambda P_x^t\|_{\mathrm{TV}}
\le \bar\eta^t\|\mu-\lambda\|_{\mathrm{TV}}.
\]
Since $p_x^w$ is invariant for $P_x$, taking $\mu=\delta_I$ and $\lambda=p_x^w$ yields 
\[
\|P_x^t(I,\cdot)-p_x^w\|_{\mathrm{TV}} \ = \ \|\delta_I P_x^t-p_x^wP_x^t\|_{\mathrm{TV}} \ \le \ \bar\eta^t\|\delta_I-p_x^w\|_{\mathrm{TV}} \ \le \ \bar\eta^t,
\]
where the first equality uses the invariance of \(p^w_x\) and the last inequality uses the fact that 
$\|\mu-\lambda\|_{\mathrm{TV}}
:=\sup_B|\mu(B)-\lambda(B)|\le1$
for probability measures. Recalling the dual representation $\sup_{\|\varphi\|_\infty\le1}
\left|\int_\I\varphi(J)(\mu-\lambda)(dJ)\right|
=2\|\mu-\lambda\|_{\mathrm{TV}}$ of the total variation, we also have 
\[
\|P_x^t-\mathbf1p_x^w\|_{\infty\to\infty} =\sup_{\|\varphi\|_\infty\le1}\sup_{I\in\I}
\left|
\int_\I\varphi(J)
\bigl(P_x^t(I,dJ)-p_x^w(dJ)\bigr)
\right|  =2\sup_{I\in\I}
\|P_x^t(I,\cdot)-p_x^w\|_{\mathrm{TV}}
\le2\bar\eta^t.
\]
But the operator \((\mathrm{Id}-P_x+\mathbf1p_x^w)^{-1}\) admits the equality $(\mathrm{Id}-P_x+\mathbf1p_x^w)^{-1} = \mathrm{Id}+\sum_{t=1}^{\infty}(P_x^t-\mathbf1p_x^w)$. To see this, multiply the partial sum \(\left[\mathrm{Id}+\sum_{t=1}^{T}(P_x^t-\mathbf1p_x^w)\right]\) by $(\mathrm{Id}-P_x+\mathbf1p_x^w)$ to obtain
$\mathrm{Id}-(P_x^{T+1}-\mathbf1p_x^w)$, which converges to $\mathrm{Id}$ as \(T\) increases. Consequently,
\[
\sup_x\|(\mathrm{Id}-P_x+\mathbf1p_x^w)^{-1}\|_{\infty\to\infty}
\ \le \ 1+2\sum_{t=1}^\infty\bar\eta^t
\ = \ \frac{1+\bar\eta}{1-\bar\eta} \ := \ C_*.
\]
Note here that $C_*$ carries the same meaning as in Theorem~\ref{thm:continuous_convergence}, with the finite-dimensional inverse replaced by its bounded-operator counterpart.

Fix $x^\star\in\argmin_{x \in \X} F_w^\nu(x)$ and adapt the centered error in Proposition~\ref{prop:poisson-noV} through
\[
e_x(I):=Q(x,\xi_I)-Q(x^\star,\xi_I)
 -\int_\I\big[Q(x,\xi_J)-Q(x^\star,\xi_J)\big] \, p_x^w(dJ), \quad e_k :=e_{x_{k-1}}(I_k).
\]
Then, $\int e_x\,dp_x^w=0$ and $\|e_x\|_\infty\le2DL'_Q$.  For each $I\in\I$, define 
\[
\widehat Z_x(I):=
 \bigl[(\mathrm{Id}-P_x+\mathbf1p_x^w)^{-1}e_x\bigr](I)
 =\sum_{t=0}^\infty(P_x^te_x)(I),
\]
which satisfies $ \widehat Z_x(I)-(P_x\widehat Z_x)(I)=e_x(I)$ and $\int\widehat Z_x\,dp_x^w=0.$ 
We can then similarly bound it as 
\[
\max\left\{\sup_I|\widehat Z_x(I)|, \sup_I|(P_x\widehat Z_x)(I)| \right\} \ \le \ B_Z \ := \ 2C_*DL'_Q.
\]
Lemma~\ref{lem:target-distribution-lipschitz}, which we proved for a general state space, gives that
\[
\|\mathbf1p_x^w-\mathbf1p_y^w\|_{\infty\to\infty} \le L_p\|x-y\| \ \text{with} \ L_p := \frac{2L'_Q}{w}
\]
and Assumption~\ref{ass:general-kernel-lipschitz} gives
\(
\|P_x-P_y\|_{\infty\to\infty}\le2L_P\|x-y\|
\)
for all \(x, y \in \X\) (note that the factor of $2$ follows from the total-variation convention in Assumption~\ref{ass:general-kernel-lipschitz}). Furthermore, exactly as for the centered error in Lemma~\ref{prop:2}, we have 
\(
\|e_x-e_y\|_\infty
 \le(2L_Q'+DL_Q'L_p)\|x-y\|.
\)
The resolvent identity used in the proof of Lemma~\ref{prop:2} remains valid for
bounded invertible operators. Applying that same argument yields
\[
\sup_I|\widehat Z_x(I)-\widehat Z_y(I)| \le\bigl[C_*(2L_Q'+DL_Q'L_p)
 +2C_*^2DL_Q'(2L_P+L_p)\bigr]\|x-y\| := L_Z\|x-y\|.
\]
Adding and subtracting $P_x\widehat Z_y$ then gives
\[
\sup_I|(P_x\widehat Z_x)(I)-(P_y\widehat Z_y)(I)|
 \le\bigl(2L_PB_Z+L_Z\bigr)\|x-y\|.
\]
Therefore, the bounds required by Proposition~\ref{prop:poisson-noV} still hold, with the only change being its kernel operator modulus $L_P$ replaced by $2L_P$.

\noindent\textbf{(iii) Application of Proposition~\ref{prop:poisson-noV} and Theorem~\ref{thm:continuous_convergence}.} 

The remainder of the proof uses exactly the same arguments as in the proofs of Proposition~\ref {prop:poisson-noV} and Theorem~\ref{thm:continuous_convergence}, with $F_w$ replaced by $F_w^\nu$ and the constants defined above. They therefore yield the stated bounds for both the step-size average and the linear average. Substituting $\alpha_k=\alpha_0/\sqrt{k}$ and $L_P=2L_Q'/w$ gives the claimed rates. \hfill \(\Box\)

\end{proof}

\subsection{Proof of Corollary~\ref{cor:adaptive-transition-identification}} \label{apx-cor:adaptive-transition-identification}
\begin{proof}{Proof:}
For the sequence associated with the fixed integer, let $X_k$
be the incumbent solution after $k$ previous oracle calls and let $I_k$
be the state stored immediately before this update.  Thus the update at
$X_k$ returns $I_{k+1}$.  The initial call, indexed by $k=0$, returns
$I_1$ and initializes the recursion.  For every $k\ge1$, if
$\|X_k-X_{k-1}\|\le c_0k^{-p}$, the argument in the proof of
Theorem~\ref{thm:finite-high-prob} gives
\[
\mathbb P\!\left(I_{k+1}\notin I^\star(X_k)\right)
\le
\bar\eta\,
\mathbb P\!\left(I_k\notin I^\star(X_{k-1})\right)
+\frac{2c_0\bar\eta L_{Q'}}
{\Delta_\star-4\varepsilon}k^{-p}
+(|\mathcal I|-1)
\exp\!\left\{-\frac{\Delta_\star-2\varepsilon}{w}\right\}.
\]
Suppose instead that the movement condition fails.  The incumbent remains
fixed during the ensuing $g(\kappa)$ transitions.  The Dobrushin contraction
and invariance of $p_{X_k}^w$ imply, uniformly over the initial state,
\[
    \left\|
        \delta_{I_k}P_{X_k}^{g(\kappa)}-p_{X_k}^w
    \right\|_{\mathrm{TV}}
    \le \bar\eta^{g(\kappa)}.
\]
The uniform score gap and the approximation error remain
\[
    p_{X_k}^w\!\left(I^\star(X_k)^c\right)
    \le
    (|\mathcal I|-1)
    \exp\!\left\{-\frac{\Delta_\star-2\varepsilon}{w}\right\}.
\]
At the transition indexed by $\kappa\ge1$, exactly $\kappa$ previous oracle calls
have been completed, so $\max\{\kappa(x_z),1\}=\kappa$.  We choose $g(\kappa)$ such that
\[
    \bar\eta^{g(\kappa)}
    \le
    \frac{2c_0\bar\eta L_{Q'}}
    {\Delta_\star-4\varepsilon}\kappa^{-p}.
\]
For our case, we therefore choose 
\[
g\!\left(\kappa(x_z)\right)
:=
\max\left\{
    1,
    \left\lceil
    \frac{
        p\log\!\left(\max\{\kappa(x_z),1\}\right)
        -
        \log\!\left(
            \dfrac{2c_0\bar\eta L_{Q'}}
            {\Delta_\star-4\varepsilon}
        \right)
    }{
        |\log\bar\eta|
    }
    \right\rceil
\right\}.
\]
Consequently, the one-step recursive argument in the proof of Theorem~\ref{thm:finite-high-prob} applies. After $\kappa$ calls,
$\state(\bar x_z)=I_\kappa$ and
$\refpoint(\bar x_z)$ is the continuous component of $X_{\kappa-1}$.
Iterating from $k=1$ through
$k=\kappa-1$ gives us the same result as in Theorem~\ref{thm:finite-high-prob}.

It remains to obtain a convenient closed-form bound. For $\kappa\ge2$, splitting its index set at
$\lfloor(\kappa-1)/2\rfloor$ gives
\[
\sum_{t=1}^{\kappa-1}\bar\eta^{\,\kappa-1-t}t^{-p}
\le
\frac{\bar\eta^{\lceil(\kappa-1)/2\rceil}}{1-\bar\eta}
+\frac{2^p}{1-\bar\eta}\kappa^{-p}.
\]
Moreover,
\[
\bar\eta^{\lceil(\kappa-1)/2\rceil}
\le
\bar\eta^{-1/2}
\exp\!\left\{-\frac{|\log\bar\eta|}{2}\kappa\right\}
\le
\bar\eta^{-1/2}
\left(\frac{2p}{e|\log\bar\eta|}\right)^p \kappa^{-p},
\]
where the last inequality follows by maximizing
$u^p\exp\{-|\log\bar\eta|u/2\}$ over $u>0$. 
Substitution yields
\(
   \sum_{t=1}^{\kappa-1}
    \bar\eta^{\,\kappa-1-t}t^{-p}
    \le C_{\bar\eta,p}\kappa^{-p}
\). \hfill \(\Box\)
\end{proof}

\end{document}